\newtheorem{lemma}{Lemma}[section]
\newtheorem{proposition}[lemma]{Proposition}
\newtheorem{theorem}[lemma]{Theorem}
\newtheorem{corollary}[lemma]{Corollary}
\newtheorem{conjecture}[lemma]{Conjecture}
\theoremstyle{definition}
\newtheorem{remark}[lemma]{Remark}
\newtheorem{definition}[lemma]{Definition}
\newtheorem{example}[lemma]{Example}
\newtheorem{question}[lemma]{Question}
\DeclareMathOperator{\id}{id}
\DeclareMathOperator{\Kh}{Kh}
\DeclareMathOperator{\Hom}{Hom}
\DeclareMathOperator{\Sq}{Sq}
\DeclareMathOperator{\Ob}{Ob}
\DeclareMathOperator{\sq}{sq}
\newcommand{\Z}{\mathbb{Z}}
\newcommand{\F}{\mathbb{F}}
\newcommand{\Bu}{\mathscr{B}_\sigma}
\newcommand{\BN}{H_{\mathrm{BN}}}
\newcommand{\CBN}{C_{\mathrm{BN}}}
\newcommand{\CKh}{C_{\Kh}}
\newcommand{\cC}{\mathscr{C}}
\newcommand{\CC}{\mathfrak{C}}
\newcommand{\cCC}{{\CC\cC}}
\newcommand{\M}{\mathcal{M}}
\newcommand{\cn}{\underline{2}^n}
\newcommand{\op}{\mathrm{op}}
\newcommand{\Grd}{\mathfrak{GAb}}
\newcommand{\Kho}[1] {\Kh_{\mathrm{o}}^{#1}}
\newcommand{\crossing}[4] {
\draw[#4](#1,#2) -- (#1+#3,#2+#3);
\draw[#4](#1+#3, #2) -- (#1+0.6 * #3, #2+0.4 * #3);
\draw[#4](#1,#2+#3) -- (#1+0.4 * #3, #2 + 0.6 * #3);
}
\newcommand{\arroweast}[3] {
\draw[->] (#1,#2) -- (#1+#3, #2);
}
\newcommand{\smoothingup}[4]{
\draw[#4] (#1,#2) -- (#1+0.2 * #3, #2+0.2 * #3) to [out = 45, in = 315] (#1+0.2 * #3, #2+0.8 * #3) -- (#1, #2 + #3);
\draw[#4] (#1+#3,#2) -- (#1 + 0.8 * #3, #2 + 0.2 * #3) to [out = 135, in = 225] (#1 + 0.8 * #3, #2 + 0.8 * #3) -- (#1+#3,#2+#3);
}
\newcommand{\smoothinglr}[4]{
\draw[#4](#1,#2) -- (#1+0.2 * #3, #2+0.2 * #3) to [out = 45, in = 135] (#1 + 0.8 * #3, #2 + 0.2 * #3) -- (#1+#3,#2);
\draw[#4](#1,#2+#3) --  (#1+0.2 * #3, #2+0.8 * #3) to [out = 315, in = 225]  (#1+0.8 * #3, #2+0.8 * #3) -- (#1+#3,#2+#3);
}
\begin{document}
\parindent0em
\setlength\parskip{.1cm}
\thispagestyle{empty}
\title{Two Second Steenrod Squares for odd Khovanov homology}
\author[Dirk Sch\"utz]{Dirk Sch\"utz}
\address{Department of Mathematical Sciences\\ Durham University\\ United Kingdom}
\email{dirk.schuetz@durham.ac.uk}
\subjclass[2020]{primary: 58K18; secondary: 58K10}
\keywords{Odd Khovanov homology, Steenrod square}

\begin {abstract}
Recently, Sarkar--Scaduto--Stoffregen constructed a stable homotopy type for odd Khovanov homology, hence obtaining an action of the Steenrod algebra on Khovanov homology with $\Z/2\Z$ coefficients. Motivated by their construction we propose a way to compute the second Steenrod square. Our construction is not unique, but we can show it to be a link invariant which gives rise to a refinement of the Rasmussen $s$-invariant with $\Z/2\Z$ coefficients. We expect it to be related to the second Steenrod square arising from the Sarkar--Scaduto--Stoffregen construction.
\end {abstract}

\maketitle

\section{Introduction}
In \cite{MR3230817}, Lipshitz and Sarkar constructed a stable homotopy type for Khovanov homology based on the framed flow categories of \cite{MR1362832}. One advantage of this construction is that they were able to make explicit calculations of the second Steenrod square \cite{MR3252965}, thus showing that their construction carries more information than the homology groups themselves.

But structural questions such as the behaviour of this homotopy type under split or connected sums, or mirroring are not easily derived from this construction. However, in joint work with Lawson \cite{MR4153651} a new construction based on functors from a cube category $\cn$ to the Burnside category $\mathcal{B}$ (see Section \ref{ss:cube} and \ref{ss:burn} for the definitions) was developed, which was shown to be equivalent to the original, and could be used to prove the structural conjectures of \cite{MR3230817}.

This latter construction was further generalized in \cite{MR4078823} by Sarkar, Scaduto and Stoffregen by allowing functors to a {\em signed} Burnside category $\mathcal{B}_\sigma$, which then resulted in a stable homotopy type for odd Khovanov homology via a realization construction.

It is worth pointing out that given a functor $F\colon \cn\to\mathcal{B}$ a framed flow category $\cC_F$ is constructed in \cite{MR4153651} which gives rise to the original construction of \cite{MR3230817}. It is not obvious how to extend this construction to functors $F_\sigma\colon\cn\to\mathcal{B}_\sigma$, possibly explaining the lack of computations for the odd homotopy type. Note that such functors give rise to well defined cohomology groups $H^\ast(F_\sigma;G)$ for any abelian group $G$.

While we cannot construct a framed flow category $\cC_{F_\sigma}$, we do succeed in constructing a weaker object, a framed $1$-flow category. These objects carry enough information to define a second Steenrod square. Our construction involves various choices, and it turns out that we cannot make it independent of all of them. We can denote our first result as follows.

\begin{proposition}\label{prop:firstprop}
Let $F_\sigma\colon \cn\to\mathcal{B}_\sigma$ be a strictly unitary $2$-functor and $\varepsilon\in \Z/2\Z$. Then there exist linear maps
\[
\Sq^2_\varepsilon\colon H^i(F_\sigma;\Z/2\Z) \to H^{i+2}(F_\sigma;\Z/2\Z),
\]
which are non-zero in general, and can depend on $\varepsilon$.
\end{proposition}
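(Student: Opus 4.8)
The plan is to mimic the classical construction of secondary cohomology operations via the combinatorial/cellular framework that replaces a framed flow category. First I would fix a strictly unitary $2$-functor $F_\sigma\colon\cn\to\mathcal{B}_\sigma$ and pass to the associated cochain complex computing $H^\ast(F_\sigma;\Z/2\Z)$; over $\Z/2\Z$ the signs in $\mathcal{B}_\sigma$ disappear, so the underlying $\Z/2\Z$-cochain complex agrees with that of the unsigned Burnside functor obtained by forgetting signs, and in particular $H^\ast(F_\sigma;\Z/2\Z)$ is just the mod-$2$ cohomology of a cube-shaped diagram of finite sets. The point is that the $2$-morphisms of $\mathcal{B}_\sigma$, together with the coherence data packaged in the notion of a \emph{framed $1$-flow category} alluded to before the statement, give enough of a "partial flow category'' to define $\Sq^2$ by the usual recipe: one needs moduli data in relative dimensions $0$ and $1$ only, and a framed $1$-flow category supplies exactly these together with the compatibilities (the relevant codimension-$1$ boundary identifications) needed for $\Sq^2$ to be a chain map on the level of the $\Sq^2$-cochain formula.

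The key steps, in order, are: (1) extract from $F_\sigma$ the framed $1$-flow category $\cC_{F_\sigma}$ — this is where a choice of framings and of a sign parameter $\varepsilon\in\Z/2\Z$ enters, since the signed $2$-morphisms do not canonically lift to oriented zero-dimensional moduli but must be coherently resolved, and $\varepsilon$ records a global choice in how this is done; (2) write down the candidate cochain-level operation $\widetilde{\Sq}^2_\varepsilon$ on cocycles by counting, with $\Z/2\Z$ coefficients, the $0$-dimensional moduli spaces assembled from pairs of generators together with the $1$-dimensional moduli that interpolate, exactly as in the Lipshitz–Sarkar combinatorial $\Sq^2$; (3) verify that $\widetilde{\Sq}^2_\varepsilon$ descends to cohomology, i.e. that it sends cocycles to cocycles and coboundaries to coboundaries — this uses the framed $1$-flow category axioms to identify the two contributions to $\delta\widetilde{\Sq}^2_\varepsilon(x)$ as boundary points of the same $1$-manifolds; and (4) exhibit an explicit $F_\sigma$ — a small torus-like link cube, or the functor realizing a known computation — for which $\Sq^2_\varepsilon\neq 0$, and, by comparing the $\varepsilon=0$ and $\varepsilon=1$ versions on a suitable example, show the dependence on $\varepsilon$ is genuine.

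The main obstacle is step (3): because we only have a $1$-flow category rather than a genuine framed flow category, the moduli spaces of relative dimension $2$ that would normally make the $\Sq^2$-cochain formula manifestly well defined are \emph{not} available, so one cannot simply quote the flow-category machinery. Instead I expect to have to check the cocycle and coboundary conditions by hand, tracing through the Burnside $2$-morphism data and the coherence isomorphisms of $\mathcal{B}_\sigma$, and it is precisely here that the construction fails to be canonical: the obstruction to removing the $\varepsilon$-dependence lives in a group that need not vanish, which is also why the proposition only claims invariance up to the choice of $\varepsilon$. Steps (1), (2) and (4) are comparatively routine — (1) is a direct unwinding of definitions, (2) is bookkeeping, and (4) reduces to a finite computation — but they must be arranged carefully so that the example in (4) is one where the cochain formula can actually be evaluated, e.g. a link whose Khovanov complex is thin enough to make the relevant moduli spaces transparent.
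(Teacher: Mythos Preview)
Your outline has the right skeleton --- build a framed $1$-flow category $\cC_{F_\sigma}$ from $F_\sigma$, invoke the $\Sq^2$ machinery for such categories, and compute examples --- but two of your diagnoses are off and one substantial block of work is missing. First, the parameter $\varepsilon$ does \emph{not} enter in resolving the $0$-dimensional signs: the sign assignment on $\cC_{F_\sigma}$ is canonical once a sign assignment on $\cC(n)$ is fixed, namely $s_F(P)=s(C_{u,v})+\sigma_{F_{u,v}}(P)$. The $\varepsilon$ lives in the \emph{framing} of the $1$-dimensional moduli, specifically in an order-of-operations ambiguity in the frame-change formula (\ref{eq:framechange}) that tells you how to adjust a cube framing when the sign assignment on a $2$-face is altered. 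Second, your ``main obstacle'' in step~(3) --- that $\widetilde{\Sq}^2_\varepsilon$ descends to cohomology --- is not the hard part: once $(\cC_{F_\sigma},s_F,f_F)$ is a \emph{framed} $1$-flow category, the cocycle/well-definedness statement is already established machinery (Definition~\ref{def:sqca} and the cited \cite[Thm.~3.13]{MR4165986}). The genuine work is earlier: showing that the pre-framing obtained from (\ref{eq:framechange}) actually satisfies the compatibility condition on $\partial\M(a,d)$, which the paper does by proving $\partial\M_{\cC_{F_\sigma}}(a,d)$ trivially covers the hexagon of the underlying cube (Definition~\ref{def:cover}(c)) and then invoking Lemma~\ref{lm:framecover}.

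The piece you omit entirely is what justifies the notation $\Sq^2_\varepsilon$ rather than $\Sq^2_{s,f,\delta,\varepsilon}$: the construction a priori depends on the sign and frame assignment $(s,f)$ on $\cC(n)$ and on a second parameter $\delta\in\Z/2\Z$, and the paper spends Lemmas~\ref{lm:nodeltaissue}--\ref{lm:changeofsign} and Corollary~\ref{cor:sqtwosigndep} eliminating these dependencies. The technique there is not a direct chain-level comparison but a mapping-cone argument: one frames the cone $\CC\cC$ so that the two choices sit as downward- and upward-closed subcategories, checks the compatibility condition on the mixed hexagons (Remark~\ref{rm:hexagon}), and then uses naturality of $\Sq^2$ with the connecting homomorphism (Lemma~\ref{lm:funcsqconnect}). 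Without this, your step~(4) cannot distinguish genuine $\varepsilon$-dependence from dependence on the other auxiliary data.
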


Given a link diagram $D$ with $n$ crossings, Sarkar--Scaduto--Stoffregen construct a functor $F_o^j\colon \cn\to \mathcal{B}_\sigma$ whose cohomology groups agree with odd Khovanov homology of the link in $q$-degree $j$. Applying Proposition \ref{prop:firstprop} gives rise to two second Steenrod squares on odd Khovanov homology. These operations do not depend on the various choices, and are in fact link invariants.

\begin{theorem}\label{thm:firstthm}
Let $L$ be a link and $\varepsilon\in \Z/2\Z$. Then there exist link invariants
\[
\Sq^2_\varepsilon\colon \Kho{i,j}(L;\Z/2\Z) \to \Kho{i+2,j}(L;\Z/2\Z)
\]
which are non-trivial in general.
\begin{enumerate}
\item The only knot with up to $8$ crossings for which $\Sq^2_0$ is non-trivial is the torus knot $T(3,4)$.
\item The only knot with up to $8$ crossings for which $\Sq^2_1$ is non-trivial is the mirror of $T(3,4)$.
\end{enumerate}
\end{theorem}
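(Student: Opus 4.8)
The plan is to run the construction underlying Proposition~\ref{prop:firstprop} on the Sarkar--Scaduto--Stoffregen functors $F_o^j\colon\cn\to\mathcal{B}_\sigma$ attached to a diagram $D$ of $L$, obtaining for each $q$-degree $j$ and each $\varepsilon\in\Z/2\Z$ a map $\Sq^2_\varepsilon$ on $\Kho{\ast,j}(L;\Z/2\Z)$, and then to prove two things: (i) the resulting operation depends neither on the auxiliary choices entering the framed $1$-flow category nor on the diagram $D$; and (ii) it is non-trivial, with the stated behaviour on small knots. Step (i) is the substance of the invariance assertion, since Proposition~\ref{prop:firstprop} only produces the square after making choices and even allows dependence on $\varepsilon$; what must be shown is that, once $\varepsilon$ is fixed, the remaining ambiguity disappears for these particular functors.

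For choice-independence at a fixed diagram I would isolate from the proof of Proposition~\ref{prop:firstprop} the precise data on which $\Sq^2_\varepsilon$ depends --- essentially a framed $1$-flow category up to a suitable notion of equivalence --- and show that any two admissible sets of choices yield framed $1$-flow categories related by moves (insertion or deletion of cancelling cells, handle slides, stabilisation) that are known not to alter the induced cohomology operation. This is the $1$-flow-category analogue of the fact that the stable homotopy type of a framed flow category is well defined; the point is that a framed $1$-flow category retains exactly enough Pontryagin--Thom data to pin down $\Sq^2$ but not the full homotopy type, so the equivalence relation can be coarser and the verification correspondingly lighter. For invariance under Reidemeister moves one adapts the argument of \cite{MR3230817,MR4153651} for the even theory: each move is realised by an explicit map of cube-shaped diagrams of signed Burnside functors, and one checks that the induced framed $1$-flow categories are $1$-equivalent, so that $\Sq^2_\varepsilon$ is carried to $\Sq^2_\varepsilon$. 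The mirror relation $L\mapsto\overline L$ is treated separately: odd Khovanov homology of $\overline L$ is the dual of that of $L$, the signed Burnside functor is replaced by its opposite, and tracking this through the construction exchanges the roles of $\varepsilon$ and $\varepsilon+1$ (up to the appropriate degree reversal), which is what underlies parts (1) and (2).

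I expect the Reidemeister invariance to be the main obstacle. The difficulty is that we do not have a framed flow category $\cC_{F_\sigma}$ to fall back on, so the whole argument must be phrased in the weaker language of framed $1$-flow categories and their equivalences, and the signs carried by $\mathcal{B}_\sigma$ have to be controlled through the neck-cutting and through the more intricate $R2$ and $R3$ moves; in the even case these steps are already the technical core, and the signed refinement adds genuine bookkeeping. A secondary subtlety is to confirm that the residual $\varepsilon$-dependence allowed by Proposition~\ref{prop:firstprop} is exactly the dependence that survives here, i.e.\ that nothing else leaks in; this is handled by the choice-independence step above.

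Finally, for non-triviality and the tabulation: $\Sq^2_0$ on $T(3,4)$ can be exhibited either by a direct computation with a minimal diagram or by comparison with the known non-vanishing of $\Sq^2$ on the even Khovanov homotopy type of $T(3,4)$ in \cite{MR3252965}, and the mirror statement then follows from the duality described above. The claim that these are the only knots with at most $8$ crossings on which $\Sq^2_\varepsilon$ is non-trivial is a finite computer calculation: implement the framed $1$-flow category of $F_o^j$ and the resulting formula for $\Sq^2$, run it over the knot table, and record the output. The invariance established above is what licenses reading these outputs as invariants of $L$ rather than of the chosen diagrams, and agreement with the even computations wherever both are defined serves as a consistency check.
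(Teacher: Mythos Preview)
Your overall architecture---construct, prove choice-independence, prove Reidemeister invariance, then compute---matches the paper's. Two points deserve comment, one a genuine gap and one a difference in method.

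\textbf{The mirror duality is not available.} You propose to derive part~(2) from part~(1) by tracking the signed Burnside functor through mirroring and concluding that this exchanges $\Sq^2_\varepsilon$ with the dual of $\Sq^2_{1+\varepsilon}$. In the paper this statement is only a \emph{conjecture} (Section~5.3): calculations on the Rolfsen table are consistent with it, but no proof is given, and the author evidently does not regard ``tracking this through the construction'' as routine. Consequently the paper establishes both (1) and (2) by independent computer calculation (Table~\ref{fig:calcs}), not by deducing one from the other. Your proposed shortcut is therefore a gap. Relatedly, the alternative you offer for non-triviality on $T(3,4)$---comparison with the even $\Sq^2$ of \cite{MR3252965}---is also unjustified: the paper exhibits no mechanism for transferring non-vanishing from the even square to $\Sq^2_\varepsilon$, and indeed the three operations are shown to differ already on $8_{19}$.

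\textbf{Choice-independence: a different route.} For independence from the auxiliary choices you sketch an approach via moves on framed $1$-flow categories (cancelling pairs, handle slides, stabilisation). The paper instead develops a single concrete tool: a \emph{mapping cone} framed $1$-flow category $\cCC$ sitting in a short exact sequence $0\to\cC\to\cCC\to\mathfrak{S}\cC'\to 0$, together with the fact that $\Sq^2$ commutes with the connecting homomorphism (Lemma~\ref{lm:funcsqconnect}). Each individual choice (sign assignment on the cube, frame assignment, $\delta$, the edge assignment, the ordering of circles) is then handled by exhibiting an explicit framing on the appropriate mapping cone, reducing the question to an elementary check of the compatibility condition on hexagons (Lemmas~\ref{lm:nodeltaissue}--\ref{lm:changeofsign} and Corollary~\ref{cor:sqtwosigndep}). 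For the Reidemeister moves the paper likewise uses only the naturality of $\Sq^2$ with respect to short exact sequences of $1$-flow categories (Lemmas~\ref{lm:easyfunc} and~\ref{lm:funcsqconnect}), following the pattern of \cite[\S 6]{MR3230817}. Your move-based approach might be made to work, but the paper's method is more direct and avoids setting up a general calculus of $1$-flow-category equivalences.
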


The subscript `o', indicating odd Khovanov homology, can be dropped, since over $\Z/2\Z$ coefficients the odd theory agrees with Khovanov's original homology. In particular, from Lipshitz and Sarkar's original stable homotopy type we get another second Steenrod square $\Sq^2$ which on all knots with up to $8$ crossings is only non-trivial on $T(3,4)$ and its mirror. We note that $T(3,4)$ and its mirror are the only possible knots with up to $8$ crossings for which a second Steenrod square can be non-trivial. Calculations show that for knots which support Khovanov homology that can have a non-trivial second Steenrod square, non-triviality is quite common.

From these first calculations we see that the three second Steenrod squares are all different. Maybe more surprising is that the odd versions fail to satisfy a Spanier--Whitehead duality for a link and its mirror that is present in the Lipshitz--Sarkar stable homotopy type. Another immediate consequence of Theorem \ref{thm:firstthm} is that our second Steenrod squares do not split over reduced odd Khovanov homology.

There is also the second Steenrod square coming from the stable homotopy type of \cite{MR4078823}, and there is the natural question what its relation is to our Steenrod squares. The behaviour of this Steenrod square is conjectured to split over the reduced theory \cite[Rm.5.7]{MR4078823}, and to satisfy the Spanier--Whitehead duality between a link and its mirror \cite[Qn.5]{MR4078823}. Clearly at most one of our Steenrod squares can agree with theirs, but understanding the relation more closely would also shed more information on their general construction. We also note that their constructions lead to potentially more than one stable homotopy type for the odd theory, compare \cite[Rm.5.6]{MR4078823}. Since we use the same functor from the cube category to the signed Burnside category as \cite{MR4078823}, we believe that there should be a relation.

One application of the stable homotopy type is to get refinements of the Rasmussen invariant $s_\F(K)$ \cite{MR2729272} for $\F$ a field. A construction for Khovanov homology was given in \cite{MR3189434}, and a version for odd Khovanov homology was established in \cite[\S 5.6]{MR4078823}. The definition carries over to our situation, and we obtain that the resulting invariant $s_\pm^{\Sq^2_\varepsilon}(K)$ is a concordance invariant. Our calculations show that this invariant can differ from the Rasmussen invariant, similar to the results in \cite{MR3189434}.

\begin{theorem}\label{thm:secthm}
Let $K$ be the mirror of the knot $9_{42}$. Then
\[
s_+^{\Sq^2_0}(K) = 2,
\]
while the Rasmussen invariant of this knot is $0$.
\end{theorem}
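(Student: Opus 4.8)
The plan is to follow the recipe of Lipshitz--Sarkar \cite{MR3189434} for refining the Rasmussen invariant, transported to the signed Burnside setting of \cite[\S 5.6]{MR4078823} and to our framed $1$-flow categories. Fix a $9$-crossing diagram $D$ of the mirror of $9_{42}$; it determines functors $F_{\mathrm{o}}^j\colon\cn\to\Bu$ whose cohomology is odd Khovanov homology in $q$-degree $j$, and the Bar--Natan deformation equips the total cohomology with a $q$-filtration whose associated graded is $\bigoplus_j\Kho{\ast,j}(K;\Z/2\Z)$. Recall that $s_+^{\Sq^2_0}(K)$ is read off from this filtered group together with the operation $\Sq^2_0$ of Theorem \ref{thm:firstthm}: it records the highest $q$-filtration level at which the distinguished Bar--Natan class persists \emph{after} one accounts for the fact that it, or a class detecting it, may lie in or be sent out of the image of $\Sq^2_0$. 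Since the mirror of $9_{42}$ has $s(K)=0$ (equivalently $s(9_{42})=0$, which is classical), and since in general $s(K)\le s_+^{\Sq^2_0}(K)\le s(K)+2$, the theorem is equivalent to showing that $s_+^{\Sq^2_0}(K)\neq 0$.

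The substantive input is therefore a single computation: that $\Sq^2_0$ is non-trivial on $\Kho{\ast,j}(K;\Z/2\Z)$ in the bidegrees straddling the $s$-invariant generator --- concretely, in homological degrees near $0$ and quantum degree near $1$ --- and that under the Bar--Natan spectral sequence this non-vanishing is seen by the distinguished class. To carry this out I would (i) write down the odd Khovanov complex of $D$ and reduce it by Gaussian elimination to a small model, locating the canonical Bar--Natan generators and the relevant groups $\Kho{0,1}$ and $\Kho{2,1}$; (ii) build, in those degrees only, the framed $1$-flow category attached to $F_{\mathrm{o}}^j$, i.e.\ the relevant $1$-dimensional moduli spaces together with their framings; and (iii) extract $\Sq^2_0$ from the resulting Pontryagin--Thom data, equivalently from the associated secondary operation. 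The expectation is that for the \emph{mirror} of $9_{42}$ the governing moduli count is odd while for $9_{42}$ itself it vanishes --- exactly the mirror asymmetry recorded after Theorem \ref{thm:firstthm}, and the reason the non-trivial operation is $\Sq^2_0$ rather than $\Sq^2_1$.

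Granting this non-vanishing, the conclusion is formal: a non-zero $\Sq^2_0$ linking the filtration level of the distinguished class to two levels higher obstructs that class from surviving at the naive level, forcing $s_+^{\Sq^2_0}(K)\ge s(K)+2=2$, and hence equality by the general upper bound. This is the same deduction as in \cite{MR3189434}, and once the operation has been computed it requires no new ideas; one also needs to check, as in that reference, that $s_+^{\Sq^2_0}$ so defined is a concordance invariant, which is inherited from the invariance statements underlying Theorem \ref{thm:firstthm} together with the usual cobordism-map argument.

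The main obstacle is steps (ii)--(iii): making the framed $1$-flow category of $F_{\mathrm{o}}^j$ explicit enough on a $9$-crossing diagram to pin down the parity of the governing moduli count, including the framings, which enter through the choices of Proposition \ref{prop:firstprop}; as in the proof of Theorem \ref{thm:firstthm} one must verify that this final parity does not depend on those choices. A second, subtler point is to confirm that the computed $\Sq^2_0$ is actually supported in the bidegree where the Bar--Natan generator lives: a non-zero $\Sq^2_0$ elsewhere would not move $s_+^{\Sq^2_0}$, so the argument hinges on tracking the canonical generator carefully through the reduced complex.
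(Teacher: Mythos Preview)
Your overall strategy matches the paper's: the argument is a computation of $\Sq^2_0$ in the relevant bidegree, followed by the formal Lipshitz--Sarkar deduction that the refined invariant jumps by $2$. The paper carries out exactly this, relying on its computer program \texttt{KnotJob} for the computational step and then checking directly from the definition that $j=-1$ is $\Sq^2_0$-full.

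However, you have the bidegrees wrong, and this reflects a misreading of the definition of $\alpha$-full. The relevant Steenrod square is
\[
\Sq^2_0\colon \Kh^{-2,-1}(K;\F_2)\longrightarrow \Kh^{0,-1}(K;\F_2),
\]
not a map out of $\Kh^{0,1}$ into $\Kh^{2,1}$. The mechanism is not that $\Sq^2_0$ ``links the distinguished class to two levels higher'' and thereby obstructs survival; rather, $j$ is $\alpha$-full when the projections $p(a),p(b)\in\Kh^{0,j}$ of classes $a,b\in H^0(\mathcal{F}_j)$ mapping to generators of $\BN^0$ lie in the \emph{image} of $\alpha$ coming from $\Kh^{-2,j}$. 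The paper's computation shows $\Kh^{0,-1}(K;\F_2)\cong\F_2$ and that $\Sq^2_0$ surjects onto it; from this, $\alpha$-fullness at $j=-1$ is immediate (any lifts $a,b$ of the two Bar--Natan generators have $p(a),p(b)$ in the one-dimensional target, hence in the image), giving $s_+^{\Sq^2_0}(K)\ge -1+3=2$ and thus equality by the general bound $s_+^\alpha\in\{s_{\F_2},s_{\F_2}+2\}$.

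Your plan to reduce the complex and read off the framed $1$-flow category data by hand is in principle viable, but the paper does not do this: it simply cites the machine computation. Either way, once you correct the target bidegree to $(-2,-1)\to(0,-1)$ and restate the fullness condition correctly, your proposal and the paper's proof coincide.
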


Computer calculations show that $s_\pm^{\Sq^2_0}(K) = s_\pm^{\Sq^2}(K)$ and $s_\pm^{\Sq^2_1}(K) = s_\F(K)$  for all knots with up to $13$ crossings, but this is no longer true for knots with $15$ crossings or more. However, computation times get very long for knots with $14$ or more crossings and we have not systematically examined them.

{\bf Acknowledgements.} The author would like to thank Sucharit Sarkar for valuable comments improving the presentation.

\section{$1$-Flow Categories}
Instead of flow categories, see \cite{MR1362832,MR3230817}, we are going to use a simplification. The basic $1$-flow category we will need is build over a cube. We now give several constructions that will be needed.

\subsection{The cube category}\label{ss:cube}
We begin with the cube category as defined in \cite{MR4153651}.

For $n$ a positive integer let $\cn$ be the category with object set $\Ob(\cn) = \{0,1\}^n$. To describe the morphisms, give $\{0,1\}^n$ the partial order $u=(u_1,\ldots,u_n) \geq v = (v_1,\ldots,v_n)$ if and only if $u_i\geq v_i$ for all $i=1,\ldots,n$. The morphism sets are then defined by
\[
\Hom_{\cn}(u,v) = \left\{ \begin{array}{cc} \{\phi_{u,v}\} & \mbox{if }u\geq v \\
\emptyset & \mbox{otherwise}
\end{array}\right.
\]
Here $\phi_{u,v}$ symbolizes the unique morphism for $u\geq v$.

We call $\cn$ the {\em $n$-dimensional cube category}. It carries a grading on objects given by
\[
|u| = \sum_{i=1}^n u_i.
\]
For $k$ a non-negative integer we write $u\geq_k v$ if $u\geq v$ and $|u|-|v|=k$. Given two objects $u,w$ with $u\geq_k w$ let $C_{u,w}$ be the full sub-category of $\cn$ with
\[
\Ob(C_{u,w}) = \{v\in \Ob(\cn)\mid u\geq v\geq w\},
\]
which we call a {\em $k$-dimensional sub-cube} of $\cn$.

We now get a cochain complex $(C^\ast(\cn;\Z/2\Z),\delta)$, where $C^k(\cn;\Z/2\Z)$ is the dual of the  $\Z/2\Z$-vector space generated by the $k$-dimensional sub-cubes of $\cn$, and
\[
\delta(C^\ast_{u,w}) = \sum [C_{r,t}:C_{u,w}] C^\ast_{r,t},
\]
where the sum is over the $(k+1)$-dimensional sub-cubes $C_{r,t}$, and $[C_{r,t}:C_{u,w}] =1$ if $C_{u,w}$ is a sub-category of $C_{r,t}$, and $0$ otherwise. Note that $C^\ast_{u,v}$ is the dual of $C_{u,v}$.

This cochain complex can be identified with the CW-cochain complex of the $n$-dimensional cube $[0,1]^n$ with its usual CW-structure. In particular, the cohomology groups for $k\geq 1$ all vanish.

\begin{definition}
A {\em sign assignment} for $\cn$ is a cochain $s\in C^1(\cn;\Z/2\Z)$ with the property that $\delta s(C_{u,w}) = 1$ for every $2$-dimensional sub-cube $C_{u,w}$, where $\delta$ is the coboundary. The {\em standard sign assignment} $s^\ast\in C^1(\cn;\Z/2\Z)$ is defined by
\[
s^\ast(C_{u,v}) = \sum_{j=1}^{i-1} v_j \bmod 2,
\]
where $i\in \{1,\ldots,n\}$ is the unique integer with $v_i<u_i$.
\end{definition}

\subsection{Framed 1-flow categories}
These were introduced in \cite{MR4165986} as a simplification of framed flow categories, which still carry enough information to define a second Steenrod square.

\begin{definition}\label{def_1flow}
A \em $1$-flow category $\cC$ \em consists of a finite set $\Ob(\cC)$, a function ${|\cdot|}\colon \Ob(\cC)\to \Z$ called the \em grading\em, and for each pair $a,b\in \Ob(\cC)$ with $|a|-|b|=1$ or $2$ a \em moduli space \em $\M(a,b)$ which is a compact manifold of dimension $|a|-|b|-1$, satisfying the following
\begin{itemize}
 \item {\bf Boundary Condition:}
If $a,c\in \Ob(\cC)$ with $|a|-|c|=2$, then the boundary of $\M(a,c)$ is given by
\[
 \partial \M(a,c)=\coprod_{b\in \Ob(\cC),|b|=|a|-1} \M(b,c)\times \M(a,b)
\]
\end{itemize}

Given a $1$-flow category and $a,d\in \Ob(\cC)$ with $|a|-|d|=3$, rather than defining a moduli space $\M(a,d)$, we only define its boundary as
\begin{align*}
 \partial \M(a,d)= & \coprod_{b\in \Ob(\cC),|b|=|a|-1} \M(b,d)\times \M(a,b) \\
 & \cup \coprod_{c\in \Ob(\cC),|c|=|a|-2}\M(c,d)\times \M(a,c)
\end{align*}
Notice that the two disjoint unions have a common subset, which is
\[
 \coprod_{\stackrel{(b,c)\in \Ob(\cC)\times \Ob(\cC)}{|b|=|c|+1=|d|+2}} \M(c,d)\times \M(b,c)\times \M(a,b)
\]
It is easy to see that $\partial\M(a,d)$ is a disjoint union of components which are homeomorphic to circles.
\end{definition}

We will use a subscript $\M_\cC$ if we want to emphasize the dependence of the moduli space on $\cC$ in the presence of other 1-flow categories.

\begin{definition}
 Let $\cC$ be a $1$-flow category. A \em sign assignment $s$ \em for $\cC$ is an assignment $s(P)\in \Z/2\Z$ for every point $P$ in a $0$-dimensional moduli space $\M(a,b)$ with the property that if $(P_1,Q_1)\in \M(b_1,c)\times \M(a,b_1)$ and $(P_2,Q_2)\in \M(b_2,c)\times \M(a,b_2)$ are the boundary of an interval component in $\M(a,c)$, then 
\[
 s(P_1)+s(Q_1)+s(P_2)+s(Q_2)=1.
\]
For objects $a,b\in \Ob(\cC)$ with $|a|=|b|+1$ we define $[a:b]\in \Z$ as
\[
 [a:b] = \sum_{A\in \M(a,b)} (-1)^{s(A)}.
\]
A \em pre-framing $f$ \em of $\cC$ is an assignment $f(C)\in \Z/2\Z$ for every component $C\subset \M(a,c)$ of a $1$-dimensional moduli space.
\end{definition}
Again we may write $[a:b]_\cC$ in the presence of other 1-flow categories.

A $1$-flow category together with a sign assignment $s$ gives rise to a cochain complex $C^\ast(\cC)$ where each $C^k(\cC)$ is the free abelian group generated by objects $a$ with $|a|=k$, and the coboundary is given by
\[
 \delta(b)=\sum_{a\in \Ob(\cC), |a|=|b|+1}[a:b]a.
\]

We can think of a sign assignment as a framing of the $0$-dimensional moduli spaces, with $0$ corresponding to a positive framing, and $1$ to a negative framing. If we embed a $1$-dimensional closed manifold into an at least $4$-dimensional Euclidean space, each component can have two different framings up to framed cobordism. Similarly, if we embed an interval into a half-Euclidean space of high enough dimension with a fixed framing on the boundary, we can get two different framings up to framed cobordism relative to the boundary.
We thus think of a pre-framing as a choice of framing of the components of the $1$-dimensional moduli spaces. In order for this to be useful, we need to add a condition.

Let $C$ be a component in $\partial \M(a,d)$. If $C$ is a circle, which is either of the form $\{P\}\times S^1\subset \M(c,d)\times \M(a,c)$ or $S^1\times \{Q\}\subset \M(b,d)\times \M(a,b)$, we define 
\[
\tilde{f}(C)=f(S^1)\in \Z/2\Z,
\]
 where $f(S^1)$ is the framing value of the circle. If $C$ is a union of intervals $\{P\}\times J\subset \M(c,d)\times \M(a,c)$ and $I \times \{Q\}\subset \M(b,d)\times \M(a,b)$, we define
\[
 \tilde{f}(C) = \sum_{I \times \{Q\}}f(I)+ \sum_{\{P\}\times J} (1+s(P)+f(J))\in \Z/2\Z
\]
where the first sum is over all intervals of the form $I\times\{Q\}\subset C$ and the second sum over all intervals of the form $\{P\}\times J$.

\begin{definition}
 Let $\cC$ be a $1$-flow category, $s$ a sign assignment and $f$ a pre-framing of $\cC$. Then $(\cC,s,f)$ is called a \em framed $1$-flow category\em, if the following \em compatibility condition \em is satisfied for $f$.
\begin{itemize}
 \item Let $a,d\in \Ob(\cC)$ satisfy $|a|=|d|+3$. Then
\[
 \sum_{C} (1+\tilde{f}(C)) = 0 \in \Z/2\Z
\]
where the sum is taken over all components in $\partial \M(a,d)$.
\end{itemize}
If $f$ satisfies the condition with respect to $s$, we call $f$ a \em framing of $\cC$\em, or a {\em frame assignment for $\cC$}.
\end{definition}

\begin{remark}
The sign and frame assignments can be used to turn the $0$- and $1$-dimensional moduli spaces into framed manifolds. These framings then induce a framing on $\partial \M(a,d)$, which is topologically the disjoint union of finitely many circles. The compatibility condition ensures this framing is framed null-cobordant. For cube 1-flow categories this is implicit in the proof of \cite[Lm.3.5]{MR3252965}. There, a framed circle $S^1$ represents an element in $H_1(SO)\cong \Z/2\Z$ such that the trivially framed circle represents the non-trivial element of $H_1(SO)$. As a result, for a trivially framed circle component $C$ we need to have that $1+\tilde{f}(C)$ contributes $0$ to the compatibility condition. That is, we need $f(S^1) = 1$ for a trivially framed circle $S^1$ in the frame assignment. The combinatorial Whitney trick \cite[\S 4.2]{MR4165986} also requires $f(S^1) = 1$ for trivially framed circles, but $\tilde{f}(C)$ was defined as $1+f(S^1)$ there, leading to an inconsistency in \cite{MR4165986}. By setting $\tilde{f}(C) = f(S^1)$ and using $f(S^1) = 1$ for a trivially framed circle we get that the combinatorial Whitney trick works as intended.
\end{remark}

\begin{example}
For $n$ a positive integer define the {\em $n$-dimensional cube $1$-flow category} $\cC(n)$ as follows.
The object set is given by $\{0,1\}^n$ with grading
\[
|(u_1,\ldots, u_n)| = \sum_{i=1}^n u_i.
\]
Notice that the object set and grading agree with those for $\cn$. We can now define the $0$-dimensional moduli spaces as
\[
\M_{\cC(n)}(u,v) = \left\{ \begin{array}{cc} \{C_{u,v}\} & \mbox{if }u\geq_1 v \\
\emptyset & \mbox{otherwise}
\end{array}
\right.
\]
If $u\geq_2 w$, there exist exactly two objects $v_1,v_2$ with $u\geq_1 v_i\geq_1 w$, and we define the $1$-dimensional moduli space $\M_{\cC(n)}(u,w)$ to be a compact interval between the points $(C_{v_1,w},C_{u,v_1})$ and $(C_{v_2,w},C_{u,v_2})$. All other $1$-dimensional moduli spaces are empty.

\end{example} 

Any sign assignment for $\cn$ also defines a sign assignment for $\cC(n)$. In particular, we have the standard sign assignment $s^\ast$. Notice that a pre-framing for $\cC(n)$ can be interpreted as a cochain $f\in C^2(\cn;\Z/2\Z)$.

\begin{definition}
The {\em standard frame assignment} $f^\ast\in C^2(\cn;\Z/2\Z)$ is defined by
\[
f^\ast(C_{u,w}) = (w_1+\cdots + w_{i-1}) \cdot (w_{i+1}+\cdots+w_{j-1}),
\] 
where $i<j$ are the unique integers with $w_i < u_i$ and $w_j < u_j$.
\end{definition}

\begin{lemma}
The standard frame assignment $f^\ast$ and the standard sign assignment $s^\ast$ turn $(\cC(n), s^\ast, f^\ast)$ into a framed $1$-flow category.
\end{lemma}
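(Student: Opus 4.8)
The plan is to verify the three defining properties in turn, the content being the compatibility condition. That $f^\ast\in C^2(\cn;\Z/2\Z)$ is a pre-framing of $\cC(n)$ is automatic, since pre-framings of $\cC(n)$ are by definition the elements of $C^2(\cn;\Z/2\Z)$. To see that $s^\ast$ is a sign assignment for $\cC(n)$, note that the $0$-dimensional moduli spaces are the singletons $\{C_{u,v}\}$ with $u\geq_1 v$, and that every interval component of a $1$-dimensional moduli space is some $\M_{\cC(n)}(u,w)$ with $u\geq_2 w$, with endpoints $(C_{v_1,w},C_{u,v_1})$ and $(C_{v_2,w},C_{u,v_2})$, where $v_1,v_2$ are the two objects satisfying $u\geq_1 v_i\geq_1 w$. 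The relation to check is $s^\ast(C_{v_1,w})+s^\ast(C_{u,v_1})+s^\ast(C_{v_2,w})+s^\ast(C_{u,v_2})=1$, which is exactly $\delta s^\ast(C_{u,w})=1$ since those four $1$-dimensional sub-cubes are the four faces of $C_{u,w}$; this holds because $s^\ast$ is a sign assignment for $\cn$.

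For the compatibility condition, fix $a,d$ with $|a|=|d|+3$. If $a\not\geq d$ then $\partial\M(a,d)=\emptyset$: a term $\M(b,d)\times\M(a,b)$ of the defining coproduct is non-empty only if $a\geq_1 b\geq_2 d$, which would force $a\geq_3 d$, and similarly for the terms $\M(c,d)\times\M(a,c)$; so there is nothing to prove. Otherwise $C_{a,d}$ is a $3$-dimensional sub-cube; write $u=a$ and $x=d$, let $p<q<r$ be the coordinates that vary, let $b_m$ ($m\in\{p,q,r\}$) be $u$ with coordinate $m$ switched off, and $c_m$ be $x$ with coordinate $m$ switched on. Then $\partial\M(u,x)$ is assembled from the three intervals $\M_{\cC(n)}(b_m,x)\times\{C_{u,b_m}\}$ and the three intervals $\{C_{c_m,x}\}\times\M_{\cC(n)}(u,c_m)$, glued along the six points indexed by the maximal chains $u>b>c>x$. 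I would check by a direct trace that these six intervals assemble into a single hexagonal circle $C$, alternating between the two forms $I\times\{Q\}$ and $\{P\}\times J$; since $\cC(n)$ has no circle moduli spaces there are no components of the form $\{P\}\times S^1$ or $S^1\times\{Q\}$, so $\partial\M(u,x)=C$.

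Next I would evaluate $\tilde f(C)$. Three of its six intervals have the form $I\times\{Q\}$ and three the form $\{P\}\times J$, so, identifying a $1$-dimensional moduli space $\M_{\cC(n)}(v,v')$ with the corresponding $2$-dimensional sub-cube $C_{v,v'}$ and taking $f=f^\ast$, $s=s^\ast$, the definition of $\tilde f$ gives
\[
\tilde f(C)=\sum_{m\in\{p,q,r\}}f^\ast(C_{b_m,x})+\sum_{m\in\{p,q,r\}}\bigl(1+s^\ast(C_{c_m,x})+f^\ast(C_{u,c_m})\bigr)\in\Z/2\Z .
\]
The six sub-cubes $C_{b_m,x}$ and $C_{u,c_m}$ are precisely the $2$-dimensional faces of $C_{u,x}$, so the $f^\ast$-terms add up to $\delta f^\ast(C_{u,x})$; hence $\tilde f(C)=\delta f^\ast(C_{u,x})+1+\sum_m s^\ast(C_{c_m,x})$, and the compatibility condition $\sum_C(1+\tilde f(C))=0$ reduces to the single identity
\[
\delta f^\ast(C_{u,x})=\sum_{m\in\{p,q,r\}}s^\ast(C_{c_m,x})\in\Z/2\Z .
\]

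The last step, which is the one demanding genuine care, is to confirm this identity directly from the defining formulas for $f^\ast$ and $s^\ast$. Expanding both sides in terms of the coordinates of $u=x$ lying outside $\{p,q,r\}$ and cancelling everything that occurs an even number of times, I expect both sides to collapse to $\sum_{l<p}x_l+\sum_{q<l<r}x_l$. The hard part of the proof is therefore exactly this bookkeeping — the computation underlying the proof of \cite[Lm.3.5]{MR3252965} — while the structural reductions above are routine.
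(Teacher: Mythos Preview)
Your proposal is correct and ends in the same direct coordinate calculation as the paper, but you organise it a little differently. The paper labels the three partial sums $a,b,c$ attached to the edges of the $3$-cube (these are your $\alpha$, $\alpha+\beta$, $\alpha+\beta+\gamma$), writes down the six framing values and three sign values explicitly, and checks that $\tilde f^\ast(C)=1$ by expanding $a(a+b)+a(a+c)+b(b+c)$ together with $1+a+b+c+(b+1)(b+c)+a(a+c+1)+a(a+b)$. You instead observe that the six $f^\ast$-terms are precisely the values on the six $2$-faces of $C_{u,x}$ and hence sum to $\delta f^\ast(C_{u,x})$, reducing compatibility to the cochain identity $\delta f^\ast(C_{u,x})=\sum_m s^\ast(C_{c_m,x})$. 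This is exactly the reformulation of the compatibility condition for $\cC(n)$ that the paper records just before Lemma~\ref{lm:changefrm}, so your approach buys a cleaner statement to verify and connects immediately to the later discussion; the paper's approach has the advantage of being a single self-contained expansion. Your expected common value $\sum_{l<p}x_l+\sum_{q<l<r}x_l$ is correct, and the bookkeeping you defer is no harder than the paper's.
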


\begin{proof}
We need to check that the compatibility condition is satisfied for any objects $u,x$ with $|u| = |x|+3$.  Given a $2$-dimensional sub-cube $C_{u,w}$ and integers $i<j$ with $w_i<u_i$ and $w_j<u_j$ write
\[
a = w_1+\cdots + w_{i-1} \mbox{ and } b = w_1+\cdots + w_{j-1},
\]
with $a,b\in \Z/2\Z$. Since $w_i = 0$ we get $f^\ast(C_{u,w}) = a(a+b)$.

Now let $C_{u,x}$ be a $3$-dimensional sub-cube of $\cn$, which spans a sub-$1$-flow category that we can visualize as follows (with letters indicating the sign assignment of each edge).
\[
\begin{tikzpicture}
\node at (0,0) {$x$};
\node at (3,1.5) {$w_1$};
\node at (3,0) {$w_2$};
\node at (3,-1.5) {$w_3$};
\node at (6,1.5) {$v_1$};
\node at (6,0) {$v_2$};
\node at (6,-1.5) {$v_3$};
\node at (9,0) {$u$};
\draw[<-] (0.3, 0.1) -- node [above, sloped] {$a$} (2.7,1.5);
\draw[<-] (0.3, 0) -- node [above] {$b$} (2.7, 0);
\draw[<-] (0.3, -0.1) -- node [above, sloped] {$c$} (2.7, -1.5);
\draw[<-] (3.3, 0.1) -- node [above, sloped, near start] {$a$} (5.7, 1.4);
\draw[<-] (3.3, -0.1) -- node [above, sloped, near start] {$c+1$} (5.7, -1.4);
\draw[-, white, line width = 6pt] (3.3, 1.4) -- (5.7, 0.1);
\draw[-, white, line width = 6pt] (3.3, -1.4) -- (5.7, -0.1);
\draw[<-] (3.3, 1.5) -- node [above] {$b+1$} (5.7, 1.5);
\draw[<-] (3.3, 1.4) -- node [above, near end, sloped] {$c+1$} (5.7, 0.1);
\draw[<-] (3.3, -1.4) -- node [above, near end, sloped] {$a$} (5.7, -0.1);
\draw[<-] (3.3, -1.5) -- node [above] {$b$} (5.7, -1.5);
\draw[<-] (6.3, 1.5) -- node [above, sloped] {$c$} (8.7, 0.1);
\draw[<-] (6.3, 0) -- node [above] {$b+1$} (8.7, 0);
\draw[<-] (6.3, -1.5) -- node [above, sloped] {$a$} (8.7, -0.1);
\end{tikzpicture}
\]
Notice that since we use the standard sign assignment, all edges moving from the lower left to the upper right are labelled with $a$, while all the edges moving from the upper left to the lower right are labelled with $c$, with an additional $+1$ or $+2$ depending on their level of grading. The horizontal edges only get a $+1$ if the first edge (labelled with $a$) preceded it.

We get that $\partial \M_{\cC(n)}(u,x)$ is a hexagon $C$ with three intervals $I\times \{Q\}$ of the form $\M_{\cC(n)}(v_m,x)\times \M_{\cC(n)}(u, v_m)$, together with three intervals $\{P\}\times J$ of the form $\M_{\cC(n)}(w_m,x)\times \M_{\cC(n)}(u, w_m)$.

The contributions to $\tilde{f}^\ast(C)$ are
\[
a(a+b) + a(a+c) + b(b+c) \in \Z/2\Z
\]
from the intervals $I\times \{Q\}$, and
\[
1+a+b+c+ (b+1)(b+c) + a(a+c+1) + a(a+b)\in \Z/2\Z
\]
from the intervals $\{P\}\times J$. Expanding out and adding the two lines shows that $\tilde{f}^\ast(C) =1$, so the compatibility condition is satisfied.
\end{proof}

We will need to look at sign and frame assignments which are not the standard ones. To begin with, assume that $s$ and $s'$ are sign assignments for $\cC(n)$. Also, assume that $f$ is a pre-framing so that $(\cC(n), s,f)$ is framed. We want to get a pre-framing $f'$ so that $(\cC(n), s', f')$ is framed.

Given a $2$-dimensional sub-cube $C_{u,w}$, there exist two objects $v_1,v_2$ with $u\geq_1 v_i \geq_1 w$. We can visualize the cube as
\[
\begin{tikzpicture}
\node at (0,0) {$w$};
\node at (3,1.5) {$v_1$};
\node at (3,0) {$v_2$};
\node at (6,1.5) {$u$};
\draw[<-] (0.3,0.1) -- node [above, sloped] {$a$} (2.7, 1.4);
\draw[<-] (0.3, 0) -- node [above] {$b$} (2.7, 0);
\draw[<-] (3.3, 1.5) -- node [above] {$d$} (5.7, 1.5);
\draw[<-] (3.3, 0.1) -- node [above, sloped] {$c$} (5.7, 1.4);
\end{tikzpicture}
\]
Here $a,b,c,d\in \Z/2\Z$ are the values of $s$ on the respective edges. In particular, $a+b+c+d = 1$. Let us denote the values of $s'$ on these edges as $a', b', c', d'\in \Z/2\Z$. Since $a'+b'+c'+d' = 1$, we have an even number of differences for $s$ and $s'$ on the four edges in $C_{u,w}$. With $\delta,\varepsilon\in \Z/2\Z$ we now define a new pre-framing $f'_{\delta,\varepsilon}$ by
\begin{equation}\label{eq:framechange}
f'_{\delta,\varepsilon}(C_{u,w}) = 
\left\{
\begin{array}{cc}
f(C_{u,w}) & \mbox{if } a=a', b=b', c=c', d=d' \\[0.2cm]
f(C_{u,w}) + 1 & \mbox{if } a\not=a', b\not=b', c=c', d=d' \\[0.2cm]
f(C_{u,w}) + c + d & \mbox{if } a=a', b=b', c\not=c', d\not=d' \\[0.2cm]
f(C_{u,w}) + \delta + a & \mbox{if } a\not=a', b=b', c=c', d\not=d' \\[0.2cm]
f(C_{u,w}) + \delta + b & \mbox{if } a=a', b\not=b', c\not=c', d=d' \\[0.2cm]
f(C_{u,w}) + \delta + \varepsilon + b & \mbox{if } a\not=a', b=b', c\not=c', d=d' \\[0.2cm]
f(C_{u,w}) + \delta + \varepsilon + a & \mbox{if } a=a', b\not=b', c=c', d\not=d' \\[0.2cm]
f(C_{u,w}) + a + b & \mbox{if } a\not=a', b\not=b', c\not=c', d\not=d' 
\end{array}
\right.
\end{equation}

Before we show that this gives a framing for $(\cC(n),s')$, let us take a closer look at (\ref{eq:framechange}). By interpreting $s$ and $s'$ as cochains in $C^1(\cn;\Z/2\Z)$ we see that $s+s'$ is a cocycle, and hence a coboundary. Furthermore, if $x$ is an object in $\cC(n)$, the coboundary $\delta(x)$  is non-zero exactly at every edge containing $x$.

So the various cases in (\ref{eq:framechange}) can be thought of as coming from changes in the sign assignment by $\delta(x)$ with $x$ a linear combination of $w, v_1, v_2, u$. If $s+s' = \delta(w)$, we get $a\not= a', b\not= b', c=c', d=d'$, and we are in the case of the second line of (\ref{eq:framechange}). Similarly, the lines three to five come from $\delta(u), \delta(v_1), \delta(v_2)$ respectively. The last line can be obtained from $\delta(w)+\delta(u)$, or from $\delta(v_1)+\delta(v_2)$, but both lead to the same value for $f'$.

For lines six and seven there is an ambiguity. Notice that $a\not=a', b = b', c\not= c', d=d'$ can be obtained from $\delta(w)+\delta(v_2)$. If we first change via $\delta(w)$, the framing value changes by $1$, and if we then change via $\delta(v_2)$ the framing changes by another $\delta+b+1$. Notice that the edge $C_{v_2,w}$ had sign $b+1$ after the change via $\delta(w)$. In total, the framing value changed by $\delta+b$, which is what is recorded in line six with $\varepsilon = 0$. However, if we first change via $\delta(v_2)$, the framing value changes by $\delta+b$, and after the change via $\delta(w)$ we add another $1$ to get a total change by $\delta + 1+b$, which is line six with $\varepsilon = 1$.

Similarly, we can get $a\not=a', b = b', c\not= c', d=d'$ from $\delta(v_1)+\delta(u)$, and we have a similar behavior in that first changing via $\delta(v_1)$ and then via $\delta(u)$ leads to a frame change by $\delta+b$, while doing it in the other order leads to a frame change by $\delta + 1 + b$.

We resolve this ambiguity by noticing that changes via $\delta(x)$ for various objects $x$ in {\em increasing} homological grading lead to the formula in (\ref{eq:framechange}) with $\varepsilon = 0$, while changing in decreasing order leads to the formula with $\varepsilon = 1$. With this convention, lines six and seven are also implied by lines two to five. Note that line one can be achieved from $\delta(w)+\delta(v_1)+\delta(v_2)+\delta(u)$, but if we make four changes in increasing homological order we also get no total change in framing.

\begin{lemma}\label{lm:frame4signchange}
If $(\cC(n), s, f)$ is framed and $\delta, \varepsilon \in \Z/2\Z$, then $(\cC(n), s', f'_{\delta,\varepsilon})$ with $f'_{\delta, \varepsilon}$ given by (\ref{eq:framechange}) is framed.
\end{lemma}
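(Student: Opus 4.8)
The plan is to reduce the statement to one-object changes of the sign assignment and then iterate. Since $s$ and $s'$ are sign assignments, each restricts to a $2$-dimensional sub-cube with coboundary $1$, so $s+s'\in C^1(\cn;\Z/2\Z)$ is a cocycle; as $H^1(\cn;\Z/2\Z)=0$, we may write $s+s'=\delta(\xi)$ with $\xi=\sum_{x\in S}x$ for some subset $S\subseteq\Ob(\cC(n))$. Hence $s'$ is obtained from $s$ by flipping, one object $x$ at a time, the sign along every edge incident to $x$. I will perform these flips in non-decreasing homological grading when $\varepsilon=0$ and in non-increasing grading when $\varepsilon=1$ (the order among objects of equal grading turning out to be immaterial), tracking a sequence of pre-framings $f=f_0,f_1,\dots,f_k$.

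The technical heart is the following one-object statement: if $(\cC(n),t,g)$ is framed and $x_0\in\Ob(\cC(n))$, then for any $\delta\in\Z/2\Z$ the data $(\cC(n),\,t+\delta(x_0),\,g')$ is framed, where $g'$ is obtained from $g$ by modifying its value on each $2$-dimensional sub-cube $C_{p,q}$, with middle vertices $r_1,r_2$, by $+1$ if $x_0=q$, by $+\,t(C_{p,r_1})+t(C_{p,r_2})$ if $x_0=p$, by $+\,\delta+t(C_{x_0,q})$ if $x_0\in\{r_1,r_2\}$, and not at all otherwise. To prove this I would fix a $3$-dimensional sub-cube $C_{u,x}$ (the only pairs for which the compatibility condition is non-vacuous) and recall from the proof of the preceding lemma that $\partial\M_{\cC(n)}(u,x)$ is a single hexagon $C$ assembled from the intervals $\M_{\cC(n)}(v_m,x)\times\M_{\cC(n)}(u,v_m)$ and $\M_{\cC(n)}(w_m,x)\times\M_{\cC(n)}(u,w_m)$; thus the compatibility condition for the new data reduces to $\tilde{g}'(C)=1$, which equals $\tilde{g}(C)$ by hypothesis. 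I would then check, for each of the five positions of $x_0$ relative to $C_{u,x}$ (equal to $x$, equal to $u$, one of the $w_m$, one of the $v_m$, or no vertex of $C_{u,x}$), that the changes to the six face values and to the signs $t(C_{w_m,x})$ entering $\tilde{g}'(C)$ cancel, so that $\tilde{g}'(C)=\tilde{g}(C)=1$. The key combinatorial point is that a middle vertex of a $3$-cube lies on exactly two of its faces as a middle vertex (and on one as its bottom vertex), so the two $\delta$-contributions cancel; the remaining contributions pair off in essentially the two-dimensional manner already exhibited in the proof of the preceding lemma.

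Granting the one-object statement, set $s_0=s$, $f_0=f$ and inductively $s_j=s_{j-1}+\delta(x_j)$ with $f_j$ the one-object modification of $f_{j-1}$ relative to $s_{j-1}$ and $x_j$; by induction each $(\cC(n),s_j,f_j)$ is framed, in particular $(\cC(n),s',f_k)$. It then remains to identify $f_k$ with $f'_{\delta,\varepsilon}$ of (\ref{eq:framechange}), which is a purely local computation: on a given $2$-cube $C_{u,w}$ only the $x_j$ lying in $\{w,v_1,v_2,u\}$ change $f$, and for each of the eight even-cardinality flip-patterns of the four edges of $C_{u,w}$ one works out the cumulative change, taking care that an $x_j$ processed after an earlier one reads the \emph{already updated} sign on the relevant edge. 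This reproduces the eight lines of (\ref{eq:framechange}); it simultaneously shows that the order among equal-grading objects is irrelevant (two such objects can be common vertices of a face only as its two middle vertices, where the steps commute) and that the increasing-versus-decreasing convention is exactly what fixes $\varepsilon$ in lines six and seven. Most of this last computation is already carried out in the discussion preceding the lemma. The main obstacle is the one-object statement itself, i.e.\ the five-case verification that $\tilde{g}'(C)=1$ on every $3$-cube; the one genuinely delicate point there is bookkeeping which edge plays the role of ``edge to the bottom vertex'' on each face where $x_0$ appears as a middle vertex, and confirming that those contributions pair off against the bottom-vertex, top-vertex and $t(C_{w_m,x})$ contributions. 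The cocycle argument, the inductive assembly and the matching with (\ref{eq:framechange}) are then routine.
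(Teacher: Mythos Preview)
Your proposal is correct and follows essentially the same strategy as the paper: reduce to a single-object sign flip $s'=s+\delta(x_0)$ and verify the compatibility condition on each $3$-cube by a case analysis on the position of $x_0$. The paper carries out the four nontrivial position checks explicitly, while you sketch them; conversely, you are more explicit than the paper about the iterative assembly (ordering the flips by grading to pin down $\varepsilon$, and checking that equal-grading flips commute on a face). One small imprecision: your parenthetical ``and on one as its bottom vertex'' applies to the $w_m$ but not the $v_m$ (there the third face has $x_0$ as its top vertex); this does not affect your key point that the two $\delta$-contributions from the middle-vertex faces cancel.
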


\begin{proof}
We will simply write $f'$ for $f'_{\delta,\varepsilon}$. After the above discussion it is now enough to assume that $s+s' = \delta(x)$ for exactly one object $x$. Consider a $3$-dimensional sub-cube $C_{u,x}$ whose sub-1-flow category is given by
\[
\begin{tikzpicture}
\node at (0,0) {$x$};
\node at (3,1.5) {$w_1$};
\node at (3,0) {$w_2$};
\node at (3,-1.5) {$w_3$};
\node at (6,1.5) {$v_1$};
\node at (6,0) {$v_2$};
\node at (6,-1.5) {$v_3$};
\node at (9,0) {$u$};
\draw[<-] (0.3, 0.1) --  (2.7,1.5);
\draw[<-] (0.3, 0) --  (2.7, 0);
\draw[<-] (0.3, -0.1) -- (2.7, -1.5);
\draw[<-] (3.3, 0.1) --  (5.7, 1.4);
\draw[<-] (3.3, -0.1) --  (5.7, -1.4);
\draw[-, white, line width = 6pt] (3.3, 1.4) -- (5.7, 0.1);
\draw[-, white, line width = 6pt] (3.3, -1.4) -- (5.7, -0.1);
\draw[<-] (3.3, 1.5) --  (5.7, 1.5);
\draw[<-] (3.3, 1.4) -- (5.7, 0.1);
\draw[<-] (3.3, -1.4) -- (5.7, -0.1);
\draw[<-] (3.3, -1.5) --  (5.7, -1.5);
\draw[<-] (6.3, 1.5) --  (8.7, 0.1);
\draw[<-] (6.3, 0) --  (8.7, 0);
\draw[<-] (6.3, -1.5) --  (8.7, -0.1);
\end{tikzpicture}
\]
Since the compatibility condition is satisfied for the corresponding hexagon, we have
\begin{multline*}
0 = s(C_{w_1,x}) + s(C_{w_2,x})+ s(C_{w_3,x}) \\ +f(C_{v_1,x})+f(C_{v_2,x}) + f(C_{v_3,x}) + f(C_{u,w_1})+ f(C_{u,w_2})+f(C_{u,w_3}).
\end{multline*}
We now need to consider the eight cases arising by changing the sign assignment via the coboundary of an object which is part of the cube, and check that the compatibility condition is still satisfied. There are four main cases, with two having very similar sub-cases.

If we change the sign assignment by $\delta(x)$, we get
\[
s'(C_{w_1,x}) + s'(C_{w_2,x})+ s'(C_{w_3,x}) = s(C_{w_1,x}) +1 + s(C_{w_2,x})+ 1 + s(C_{w_3,x}) +1,
\]
and
\[
f'(C_{v_1,x})+f'(C_{v_2,x}) + f'(C_{v_3,x}) = f(C_{v_1,x})+1+ f(C_{v_2,x}) +1 + f(C_{v_3,x}) +1.
\]
The other $2$-dimensional sub-cubes are not changing frame, so the total change is $0$, and the compatibility condition is still satisfied.

If we change the sign assignment by $\delta(w_1)$, we get
\[
s'(C_{w_1,x}) + s'(C_{w_2,x})+ s'(C_{w_3,x}) = s(C_{w_1,x}) +1 + s(C_{w_2,x})+ s(C_{w_3,x}) ,
\]
while the three sub-cubes that change their framing do this as follows.
\begin{multline*}
f'(C_{v_1,x})+f'(C_{v_2,x}) + f'(C_{u, w_1}) = \\
f(C_{v_1,x})+\delta+s(C_{w_1,x})+f(C_{v_2,x}) +\delta+ s(C_{w_1,x}) + f(C_{u, w_1})+1.
\end{multline*}
Again, the changes cancel each other out and we still have the compatibility condition.

Changing via $\delta(w_2)$ and $\delta(w_3)$ is practically the same.

If we change via $\delta(v_1)$, the signs of the first three edges no longer change, and we only need to look at what happens to the three involved $2$-dimensional sub-cubes. Then we get
\begin{multline*}
f'(C_{v_1,x}) + f'(C_{u,w_1}) + f'(C_{u, w_2}) = f(C_{v_1,x}) + s(C_{v_1, w_1})+s(C_{v_1, w_2}) + \\ f(C_{u,w_1}) +\delta + s(C_{v_1, w_1}) + f(C_{u, w_2}) +\delta + s(C_{v_1, w_2}).
\end{multline*}
In particular, the contribution does not change. The same happens if we change via $\delta(v_2)$ and $\delta(v_3)$.

Finally, consider the change via $\delta(u)$. Again, the signs of the first three edges do not change, and we only need to check that the contribution of the three affected $2$-dimensional sub-cubes does not change. Here we have
\begin{multline*}
 f'(C_{u,w_1}) + f'(C_{u, w_2}) + f'(C_{u,w_3}) = f(C_{u,w_1}) + s(C_{u,v_1}) + s(C_{u,v_2}) + \\ 
 f(C_{u, w_2}) + s(C_{u,v_1}) + s(C_{u,v_3}) + f(C_{u,w_3}) + s(C_{u,v_2}) + s(C_{u,v_3}).
\end{multline*}
Again, all changes cancel each other, and the compatibility condition remains satisfied.
\end{proof}

If we interpret a pre-framing $f$ on $\cC(n)$ as a cochain $f\in C^2(\cn;\Z/2\Z)$, the compatibility condition can be expressed as 
\[
\delta(f)(C_{u,x}) = s(C_{w_1,x})+s(C_{w_2,x})+s(C_{w_3,x}),
\]
for all $3$-dimensional sub-cubes $C_{u,x}$, with $C_{w_i,x}$ the obvious edges, and $s$ a sign-assignment.

In particular, if $f,f'$ are framings for $(\cC(n),s)$, we get that $f+f'$ is a cocycle, hence a coboundary. Let us record this for future reference.

\begin{lemma}\label{lm:changefrm}
Let $(\cC(n),s,f)$ and $(\cC(n),s,f')$ be framed. Then there exists a cochain $g\in C^1(\cn;\Z/2\Z)$ with $f+f' = \delta(g)$. Furthermore, adding a coboundary to a frame assignment gives another frame assignment. \hfill\qed 
\end{lemma}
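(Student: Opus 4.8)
The statement to prove, Lemma~\ref{lm:changefrm}, has two parts: (i) if $f$ and $f'$ are both frame assignments for the same sign assignment $s$ on $\cC(n)$, then $f+f' = \delta(g)$ for some $g\in C^1(\cn;\Z/2\Z)$; and (ii) conversely, adding any coboundary $\delta(g)$ to a frame assignment yields another frame assignment.

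\medskip

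\textbf{Proof proposal.}

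The plan is to exploit the cochain-level reformulation of the compatibility condition stated immediately before the lemma, namely that a pre-framing $f\in C^2(\cn;\Z/2\Z)$ is a frame assignment for $s$ exactly when
\[
\delta(f)(C_{u,x}) = s(C_{w_1,x})+s(C_{w_2,x})+s(C_{w_3,x})
\]
for every $3$-dimensional sub-cube $C_{u,x}$, where $C_{w_i,x}$ are the three edges out of $x$ inside that sub-cube. The right-hand side depends only on $s$, not on $f$. So if $f$ and $f'$ are two frame assignments for the same $s$, subtracting the two instances of this equation gives $\delta(f+f') = 0$, i.e.\ $f+f'$ is a $2$-cocycle in $C^\ast(\cn;\Z/2\Z)$. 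The key input is then the fact, already recorded in Section~\ref{ss:cube}, that this cochain complex is the CW-cochain complex of the cube $[0,1]^n$, whose reduced cohomology vanishes in all positive degrees; in particular $H^2(\cn;\Z/2\Z)=0$, so every $2$-cocycle is a coboundary. Hence $f+f' = \delta(g)$ for some $g\in C^1(\cn;\Z/2\Z)$, which is the first assertion.

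For the second assertion, let $(\cC(n),s,f)$ be framed and let $g\in C^1(\cn;\Z/2\Z)$ be arbitrary. Set $f'' = f + \delta(g)$. Since $\delta\circ\delta = 0$, we have $\delta(f'') = \delta(f) + \delta(\delta(g)) = \delta(f)$, so $f''$ satisfies the same equation $\delta(f'')(C_{u,x}) = s(C_{w_1,x})+s(C_{w_2,x})+s(C_{w_3,x})$ that $f$ does. By the cochain-level characterization of the compatibility condition, $f''$ is therefore a frame assignment for $s$, completing the proof.

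The only point requiring a little care — and the place I would spend most effort verifying — is the claimed equivalence between the geometric compatibility condition (stated in terms of $\tilde f$ and the components of $\partial\M(a,d)$) and the clean cochain formula $\delta(f)(C_{u,x}) = s(C_{w_1,x})+s(C_{w_2,x})+s(C_{w_3,x})$. Once that translation is in hand, the lemma is a one-line consequence of $\delta^2=0$ together with the acyclicity of the cube cochain complex, so there is no substantial obstacle; the work has effectively already been done in the discussion preceding the statement and in the computation in the proof of the previous (standard frame assignment) lemma. I would simply remark that the displayed cochain identity is exactly the compatibility condition rewritten using the definitions of $\tilde f$ and $\delta$ on $C^\ast(\cn;\Z/2\Z)$, and then give the two short arguments above.
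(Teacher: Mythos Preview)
Your proposal is correct and follows exactly the paper's approach: the paper records precisely the cochain reformulation $\delta(f)(C_{u,x}) = s(C_{w_1,x})+s(C_{w_2,x})+s(C_{w_3,x})$ in the paragraph immediately preceding the lemma, deduces that $f+f'$ is a cocycle (hence a coboundary by the acyclicity of the cube complex from Section~\ref{ss:cube}), and marks the lemma with a \qedsymbol\ as a direct consequence. Your write-up is simply a fuller account of that same argument, including the converse via $\delta^2=0$.
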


\subsection{The second Steenrod Square of a framed 1-flow category}

Framed $1$-flow categories carry enough information to define an operator on cohomology with $\Z/2\Z$ coefficients that we call the second Steenrod Square, since if the framed $1$-flow category comes from a framed flow category, the operator agrees with the second Steenrod Square of the corresponding stable homotopy type.

To define this operator, assume that $(\cC, s, f)$ is a framed $1$-flow category, and $\varphi\in C^k(\cC;\Z/2\Z)$ is a cocycle. Let $c_1,\ldots, c_n\in \Ob(\cC)$ be the objects with $|c_i| = k$ and $\varphi(c_i)=1\in \Z/2\Z$ for all $i\in \{1,\ldots, n\}$.

Now let $b_1,\ldots, b_m\in \Ob(\cC)$ be those objects with $|b_j| = k+1$ and $\M(b_j,c_i)$ non-empty for some $i$. Consider the disjoint union
\[
\M(b_j, \varphi) = \coprod_{i=1}^n \M(b_j, c_i),
\] 
which is a non-empty set with an even number of elements, since $\varphi$ is a cocycle.

\begin{definition}
Let $(\cC,s,f)$ be a framed 1-flow category and $\varphi\in C^k(\cC;\Z/2\Z)$ be a cocycle. Writing $b_1,\dots b_m$ as above we may choose a partition of the elements of $\M(b_j,\varphi)$ into ordered pairs. If we make this choice for each $j=1,\ldots,m$, the overall choice is called a \em combinatorial boundary matching \em $\mathcal{C}$ for $\varphi$.
\end{definition}

\begin{definition}\label{def:graphstructure}
 A \em special graph structure \em $\Gamma=\Gamma(V,E,E',E'',L,s,f)$ consists of a set of vertices $V$, a function $s\colon V \to \Z/2\Z$, a set of edges $E$, a subset $E'\subset E$ together with a function $f\colon E'\to \Z/2\Z$, a subset $E'' \subset E-E'$ of directed edges, and a set of loops $L$. All sets are finite. The following conditions are satisfied:
\begin{enumerate}
 \item \label{item:gs1}Each vertex is contained in at least one edge, and in at most two edges. We denote the set of vertices contained in only one edge by $\partial V$ and call these the boundary points.
 \item \label{item:gs2}If $v\in \partial V$, then the unique edge $e(v)$ with $v\in e(v)$ satisfies $e(v)\in E'$.
 \item \label{item:gs3}If $v \in V-\partial V$, then there is an edge $e_1\in E'$ with $v\in e_1$, and an edge $e_2\in E-E'$ with $v\in e_2$.
 \item \label{item:gs4}If $e\in E'$ and $e=\{v_1,v_2\}$, then $s(v_1)\not=s(v_2)$.
 \item \label{item:gs5}If $e\in E-E'$ and $e=\{v_1,v_2\}$, then $s(v_1)=s(v_2)$ if and only if $e\in E''$.
\end{enumerate}
The edges $E$ determine an equivalence relation on the vertex set $V$ in the obvious way. By condition (1) the edges in an equivalence class either form an interval or a circle. The set of equivalence classes under this relation, taken together with the set $L$ of loops, forms the set of \em components \em of $\Gamma$. 
\end{definition}

Notice that $L$ is a set independent from possible circles made from edges in $E$.

\begin{example}\label{ex:cocycle}
Let $(\cC, s, f)$ be a framed $1$-flow category, $\varphi\in C^k(\cC;\Z/2\Z)$ a cocycle, and $\mathcal{C}$ a combinatorial boundary matching for $\varphi$. Let $c_1,\ldots, c_n, b_1,\ldots, b_m\in \Ob(\cC)$ be as above.

Given $a\in \Ob(\cC)$ with $|a|=k+2$, define a special graph structure $\Gamma_\mathcal{C}(a,\varphi)$ as follows. The vertex set $V$ is the disjoint union
\[
V = \coprod_{i,j} \M(b_j, c_i)\times \M(a,b_j),
\]
and $s\colon V \to \Z/2\Z$ is given by $s(B,A) = s(B)+s(A)$ with the latter $s$ the sign assignment of $\cC$. Each interval component $I\subset \M(a, c_i)$ defines an edge $e_I\in E'$, with $f\colon E'\to \Z/2\Z$ determined by the framing of $\cC$. Each circle component $C\subset \M(a,c_i)$ with $f(C) = 0$ defines a loop in $L$.

The remaining edges $E-E'$ are determined by $\mathcal{C}$. If $(B_1, B_2)\in \mathcal{C}$, there is a unique $j$ with $B_1,B_2\in \M(b_j,\varphi)$. For $A\in \M(a,b_j)$ we now get an edge $e\in E-E'$ between $(B_1, A)$ and $(B_2, A)$. If $s(B_1) = s(B_2)$, then $e\in E''$, and the direction is from $(B_1, A)$ to $(B_2, A)$, that is, the direction is inherited from $\mathcal{C}$.

Notice that $\partial V = \emptyset$.
\end{example}

\begin{definition}
Let $\Gamma=\Gamma(V,E,E',E'',L,s,f)$ be a special graph structure. For a circle component $C$ of $\Gamma$ which contains a vertex, let $F(C)$ be the sum of the framing values $f(e')$ where $e'\in E'$ is in $C$. Also, let $D(C)\in \Z/2\Z$ be the number of oriented edges in $C$ which point in a chosen given direction.
\end{definition}

It is shown in \cite[Lm.3.10]{MR4165986} that $D(C)$ does not depend on the direction.

\begin{definition}\label{def:sqca}
Let $(\cC,s,f)$ be a framed $1$-flow category and $\varphi\in C^k(\cC;\Z/2\Z)$ a cocycle. Then define a cochain $\sq^\varphi\colon C_{k+2}(\cC;\Z/2\Z)\to \Z/2\Z$ by 
\[
\sq^\varphi(a):=|L|+\sum_{C}\left(1+F(C)+D(C)\right)\in\Z/2\Z,
\]
where $L$ is the loop set of $\Gamma_\mathcal{C}(a,\varphi)$ and the sum is taken over all components $C$ of~$\Gamma_\mathcal{C}(a,\varphi)$ containing a vertex. Here $\Gamma_\mathcal{C}(a,\varphi)$ is the special graph structure from Example \ref{ex:cocycle}.
\end{definition}

By \cite[Thm.3.13]{MR4165986} $\sq^\varphi$ is a cocycle whose cohomology class does not depend on the combinatorial matching, and there is a linear map
\[
\Sq^2\colon H^k(\cC;\Z/2\Z) \to H^{k+2}(\cC;\Z/2\Z);\qquad z\mapsto [\sq^\varphi],
\]
where $\varphi$ is any choice of cocycle representing the cohomology class $z$.

\subsection{Short exact sequences of $1$-flow categories}\label{ssc:ses}
We need a bit of functoriality for the $\Sq^2$-operator. It will be enough to look at sub- and quotient $1$-flow categories.

\begin{definition}
Let $\cC$ and $\cC'$ be $1$-flow categories, and assume that $\Ob(\cC')\subset \Ob(\cC)$, with $|\cdot|_{\cC'}$ the restriction of $|\cdot|_\cC$. We call $\cC'$ a {\em full subcategory of $\cC$}, if $\M_{\cC'}(a,b) = \M_{\cC}(a,b)$ for all $a,b\in \Ob(\cC')$. If $\cC, \cC'$ are framed, we also require the sign and framing assignment of $\cC'$ to be restriction.
\end{definition}

The next definition is the $1$-flow category version of \cite[Def.3.31]{MR3230817}.

\begin{definition}
Let $\cC'$ be a full subcategory of the $1$-flow category $\cC$. We say that $\cC'$ is a {\em downward closed subcategory} (respectively, an {\em upward closed subcategory}) of $\cC$, if for all $x,y\in \Ob(\cC)$ with $\M_\cC(x,y)\not=\emptyset$, $x\in \Ob(\cC')$ implies $y\in \Ob(\cC')$ (respectively, $y\in \Ob(\cC')$ implies $x\in \Ob(\cC')$).
\end{definition}

Let $\cC$ be a $1$-flow category, and $\cC'$ a downward closed subcategory. Define $\cC''$ by $\Ob(\cC'') = \Ob(\cC) - \Ob(\cC')$, and if $x,y\in \Ob(\cC'')$, let $\M_{\cC''}(x,y) = \M_{\cC}(x,y)$. All other functions (grading, sign and framing (if $\cC$ is framed)) are given by restriction. It is then easy to see that $\cC''$ is an upward closed subcategory of $\cC$.

Similarly, if $\cC''$ is an upward closed subcategory of $\cC$, we can define a downward closed subcategory $\cC'$ in the same way.

\begin{definition}
Let $\cC$ be a $1$-flow category, and $\cC', \cC''$ full subcategories with $\Ob(\cC')\sqcup \Ob(\cC'') = \Ob(\cC)$. If $\cC'$ is downward closed and $\cC''$ upward closed in $\cC$, we write
\[
0 \longrightarrow \cC' \longrightarrow \cC \longrightarrow \cC'' \longrightarrow 0
\]
and call this a short exact sequence of $1$-flow categories.
\end{definition}

If $\cC$ is framed, so are $\cC'$ and $\cC''$, and we get a short exact sequence of the corresponding chain complexes. In particular, there is a long exact sequence of cohomology groups
\begin{multline*}
\cdots \longrightarrow H^{k-1}(\cC';\Z/2\Z) \stackrel{\delta^\ast}{\longrightarrow} \\ 
H^k(\cC'';\Z/2\Z) \stackrel{p^\ast}{\longrightarrow} H^k(\cC;\Z/2\Z) \stackrel{i^\ast}{\longrightarrow} H^k(\cC'';\Z/2\Z) \longrightarrow \cdots
\end{multline*}

\begin{lemma}\label{lm:easyfunc}
Let
\[
0 \longrightarrow \cC' \longrightarrow \cC \longrightarrow \cC'' \longrightarrow 0
\]
be a short exact sequence of framed $1$-flow categories. Then
\[
p^\ast\circ \Sq^2 = \Sq^2\circ p^\ast\colon H^k(\cC'';\Z/2\Z)\to H^{k+2}(\cC;\Z/2\Z),
\]
and
\[
i^\ast\circ \Sq^2 = \Sq^2\circ i^\ast\colon H^k(\cC;\Z/2\Z)\to H^{k+2}(\cC';\Z/2\Z),
\]
for all $k\in \Z$.
\end{lemma}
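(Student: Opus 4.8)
The plan is to verify both identities by comparing, on the cochain level, the combinatorial data defining $\sq^\varphi$ for $\cC$ against that for $\cC'$ and $\cC''$. The key observation is that the special graph structure $\Gamma_\mathcal{C}(a,\varphi)$ of Example \ref{ex:cocycle} is built entirely out of moduli spaces $\M(b_j,c_i)$ and $\M(a,b_j)$ together with the sign and framing data on them, and for a full downward/upward closed subcategory these moduli spaces are literally the same whenever all the objects involved lie in the subcategory.

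First I would treat the map $i^\ast\circ\Sq^2 = \Sq^2\circ i^\ast$ on $H^k(\cC;\Z/2\Z)\to H^{k+2}(\cC';\Z/2\Z)$. Pick a cocycle $\varphi\in C^k(\cC;\Z/2\Z)$; then $i^\ast\varphi = \varphi|_{\cC'}$ is the cocycle obtained by restricting to objects of $\cC'$. Since $\cC'$ is downward closed, for any $a\in\Ob(\cC')$ with $|a|=k+2$ every object $b_j$ with $\M_\cC(a,b_j)\neq\emptyset$ and every $c_i$ with $\M_\cC(b_j,c_i)\neq\emptyset$ automatically lies in $\cC'$, and the moduli spaces, their sign assignments, and their framings all agree with those computed in $\cC'$. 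Hence one can choose the combinatorial boundary matching $\mathcal{C}$ for $\varphi|_{\cC'}$ to be the restriction of a matching for $\varphi$, and then $\Gamma_\mathcal{C}(a,\varphi|_{\cC'})$ equals $\Gamma_\mathcal{C}(a,\varphi)$ as special graph structures. Therefore $\sq^{\varphi|_{\cC'}}(a) = \sq^\varphi(a)$ for all such $a$, i.e. $\sq^{i^\ast\varphi} = i^\ast\sq^\varphi$ on the nose, which passes to cohomology and gives the second identity. (One should note independence of $\Sq^2$ from the matching, guaranteed by \cite[Thm.3.13]{MR4165986}, to justify the freedom to choose compatible matchings.)

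Next I would handle $p^\ast\circ\Sq^2 = \Sq^2\circ p^\ast$ on $H^k(\cC'';\Z/2\Z)\to H^{k+2}(\cC;\Z/2\Z)$. Here $p^\ast$ is induced by the inclusion $C^\ast(\cC'')\hookrightarrow C^\ast(\cC)$ extending a cochain on $\Ob(\cC'')$ by zero on $\Ob(\cC')$; since $\cC''$ is upward closed, this is a chain map. Given a cocycle $\psi\in C^k(\cC'';\Z/2\Z)$, the $c_i$ with $\psi(c_i)=1$ all lie in $\cC''$, hence for $a\in\Ob(\cC)$ the relevant $b_j$ (those with $\M_\cC(a,b_j)\neq\emptyset$ and $\M_\cC(b_j,c_i)\neq\emptyset$) also lie in $\cC''$ by upward closedness, and again all moduli spaces, signs and framings entering $\Gamma_\mathcal{C}(a,p^\ast\psi)$ coincide with those entering $\Gamma_\mathcal{C}(a,\psi)$ computed in $\cC''$. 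So $\sq^{p^\ast\psi}(a) = \sq^\psi(a)$ when $a\in\Ob(\cC'')$, while for $a\in\Ob(\cC')$ one must check $\sq^{p^\ast\psi}(a) = 0$; but in that case some $c_i$ with $\M_\cC(a,c_i)\neq\emptyset$ or $b_j$ feeding into it would have to lie in $\cC''$, contradicting downward closedness of $\cC'$ unless there are no such contributions, so $\Gamma_\mathcal{C}(a,p^\ast\psi)$ has empty vertex set and empty loop set and $\sq^{p^\ast\psi}(a)=0$. Hence $\sq^{p^\ast\psi} = p^\ast\sq^\psi$ cochain-wise, giving the first identity after passing to cohomology.

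The only genuine subtlety — and the step I expect to need the most care — is the bookkeeping for the combinatorial boundary matchings: $\sq^\varphi$ depends on a choice of $\mathcal{C}$ at the cochain level, and its cohomology class only is canonical. To make the cochain-level equalities above literally true I would fix a matching $\mathcal{C}$ for the cocycle upstairs and use its restriction (resp. its image under $p^\ast$, which is just the same matching since the sets $\M(b_j,\varphi)$ are unchanged) downstairs; with that coordinated choice the two special graph structures are identical and all three quantities $|L|$, $F(C)$, $D(C)$ match term by term. Everything else is an immediate consequence of the definition of "full downward/upward closed subcategory," which forces the moduli spaces and their sign/frame data to be restrictions.
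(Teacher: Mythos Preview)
Your proposal is correct and follows essentially the same approach as the paper's proof: compare the special graph structures $\Gamma_\mathcal{C}(a,\varphi)$ directly, using that downward/upward closedness forces all relevant objects and moduli spaces to lie in the appropriate subcategory, so the same combinatorial boundary matching works on both sides and the cochain-level identity $\sq^{p^\ast\psi}=p^\ast\sq^\psi$ (resp.\ $\sq^{i^\ast\varphi}=i^\ast\sq^\varphi$) holds. The paper treats $p^\ast$ explicitly and declares $i^\ast$ similar; you simply reverse the order and spell out both cases.
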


\begin{proof}
Let $\varphi\in C^k(\cC'';\Z/2\Z)$ be a cocycle, and $\mathcal{C}$ a combinatorial boundary matching for $\varphi$. Denote $c_1,\ldots, c_n\in \Ob(\cC'')$ the objects with $\varphi(c_i) = 1$. Let $b_1,\ldots, b_m\in \Ob(\cC'')$ be the objects with $\M_{\cC''}(b_j,c_i) \not=\emptyset$. We have $\Ob(\cC'')\subset \Ob(\cC)$, and the $c_1,\ldots, c_n$ are exactly the objects $c$ in $\Ob(\cC)$ with $p^\ast\varphi(c) = 1$. 
Furthermore, since $\cC''$ is upward closed, there are no other $b\in \Ob(\cC)$ with $\M_\cC(b,c_j)\not=\emptyset$. Hence $\mathcal{C}$ also works as a combinatorial boundary matching for $p^\ast\varphi$.

If $a\in \Ob(\cC)$, we either have $a\in \Ob(\cC')$ or $a\in \Ob(\cC'')$. In the first case the special graph structure $\Gamma_\mathcal{C}(a,p^\ast\varphi)$ from Example \ref{ex:cocycle} is empty, as each $\M_\cC(a,c_i)=\emptyset$. If $a\in \Ob(\cC'')$, then $\Gamma_\mathcal{C}(a,\varphi)$ agrees with $\Gamma_\mathcal{C}(a,p^\ast\varphi)$, and therefore $\sq^\varphi(a) = \sq^{p^\ast\varphi}(a)$ for these $a$. Hence $\sq^{p^\ast\varphi} = p^\ast\sq^\varphi$, and $\Sq^2$ commutes with $p^\ast$.

The case for $i^\ast$ is similar, and will be omitted.
\end{proof}

To see that $\Sq^2$ also commutes with the connecting homomorphism, let us first introduce a mapping cone $1$-flow category.

Let $\cC'$ be a downward closed subcategory of the framed $1$-flow category $\cC$ and write $0\longrightarrow\cC'\stackrel{i}{\longrightarrow}\cC$. Let $\CC\cC_i$ be the following $1$-flow category. We set $\Ob(\CC\cC_i) = \Ob(\cC)\sqcup \Ob(\cC')$. Since $a\in \Ob(\cC')\subset \Ob(\cC)$ appears twice in $\Ob(\CC\cC_i)$, let us write $\bar{a}\in \Ob(\CC\cC_i)$ if we mean it to be in the second copy, and $a\in \Ob(\CC\cC_i)$ if we mean it to be in the first copy.

The grading is given by $|a|_{\CC\cC_i} = |a|_\cC$ for $a\in \Ob(\cC)$, and for $a\in \Ob(\cC')$ we set $|\bar{a}|_{\cCC_i} = |a|_{\cC'}+1$. 

For $a,b\in \Ob(\cC)$, let
\[
\M_{\CC\cC_i}(a,b) = \M_\cC(a,b),
\]
and for $a,b\in \Ob(\cC')$ let
\[
\M_{\CC\cC_i}(\bar{a},\bar{b}) = \M_{\cC'}(a,b).
\]
If $a\in \Ob(\cC')$, let
\[
\M_{\CC\cC_i}(\bar{a},a) = \{P_a\},
\]
a one-element set. Also, if $a,b\in \Ob(\cC')$ with $|a| = |b|+1$, let
\[
\M_{\CC\cC_i}(\bar{a},b) = \M_{\cC'}(a,b) \times I.
\]
If $A\in \M_{\cC'}(a,b)$ we write $I_A$ for the interval $\{A\}\times I$, and we stipulate that $\partial I_A = \{(P_b, A), (A, P_a)\}$. Any $\M_{\CC\cC_i}(a,\bar{b})$ are empty. It is easy to see that $\CC\cC_i$ is a $1$-flow category.

If $(\cC,s,f)$ is framed, we can frame $\CC\cC_i$ as follows. For $\M_{\CC\cC_i}(a,b)$ and $\M_{\CC\cC_i}(\bar{a},\bar{b})$ we use the same signs and framing as in $\cC$. For $P_a\in \M_{\CC\cC_i}(\bar{a},a)$ we set
\[
s(P_a) = |a|_{\cC'},
\]
and for $A\in \M_{\cC'}(a,b)$ we set
\[
f(I_A) = s(A) \cdot (|b|_{\cC'} + s(A))\in \Z/2\Z.
\]

\begin{lemma}
The $1$-flow category $\CC\cC_i$ together with $s$ and $f$ is framed.
\end{lemma}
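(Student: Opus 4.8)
The plan is to verify the two conditions defining a framed $1$-flow category for $(\CC\cC_i,s,f)$: the sign-assignment identity for $0$-dimensional moduli spaces, and the compatibility condition for $f$. The strategy is to reduce to the corresponding statements for $\cC$ and $\cC'$ wherever possible, and to handle the genuinely new moduli spaces $\M_{\CC\cC_i}(\bar a,a)=\{P_a\}$ and $\M_{\CC\cC_i}(\bar a,b)=\M_{\cC'}(a,b)\times I$ (for $|a|=|b|+1$) by hand, using the explicitly prescribed values $s(P_a)=|a|_{\cC'}$ and $f(I_A)=s(A)(|b|_{\cC'}+s(A))$.

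First I would settle the sign assignment. Since no moduli space of $\CC\cC_i$ goes from an unbarred object to a barred one, the $1$-dimensional moduli spaces are of three types: $\M_\cC(a,b)$, $\M_{\cC'}(a,b)$ (appearing as $\M_{\CC\cC_i}(\bar a,\bar b)$), and $\M_{\cC'}(a,b)\times I$ (appearing as $\M_{\CC\cC_i}(\bar a,b)$). For the first two, the boundary computed in $\CC\cC_i$ is literally the boundary computed in $\cC$, respectively $\cC'$, so the sign identity is inherited from the framings of $\cC$ and $\cC'$. For an interval $I_A\subset\M_{\CC\cC_i}(\bar a,b)$ the two endpoints are $(P_b,A)$ and $(A,P_a)$, and the alternating sum is $s(P_b)+s(A)+s(A)+s(P_a)=|b|_{\cC'}+|a|_{\cC'}$, which is $1$ in $\Z/2\Z$ because $|a|_{\cC'}=|b|_{\cC'}+1$.

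For the compatibility condition, fix $a,d$ with $|a|_{\CC\cC_i}=|d|_{\CC\cC_i}+3$. If $a$ and $d$ are both unbarred, then $\partial\M_{\CC\cC_i}(a,d)=\partial\M_\cC(a,d)$ and the condition holds because $\cC$ is framed; if both are barred it reduces to $\cC'$; and if $a$ is unbarred and $d$ barred then $\partial\M_{\CC\cC_i}(a,d)=\emptyset$. The only case with content is $a=\bar a'$ barred and $d$ unbarred, and here if $d\notin\Ob(\cC')$ then downward closedness of $\cC'$ in $\cC$ forces every moduli space entering $\partial\M_{\CC\cC_i}(\bar a',d)$ to be empty; so we may assume $d=d'\in\Ob(\cC')$ with $|a'|_{\cC'}=|d'|_{\cC'}+2$. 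Writing $M=\M_{\cC'}(a',d')$, I would list the components of $\partial\M_{\CC\cC_i}(\bar a',d')$: each circle component $N$ of $M$ contributes the two circles $N\times\{P_{a'}\}$ and $\{P_{d'}\}\times N$, while each interval component $N\subset M$, say with $\partial N=\{(B_1,A_1),(B_2,A_2)\}$ where $A_i\in\M_{\cC'}(a',b_i')$ and $B_i\in\M_{\cC'}(b_i',d')$, assembles --- together with the four intervals $I_{B_i}\times\{A_i\}$ and $\{B_i\}\times I_{A_i}$ --- into a single hexagonal circle $C_N$, with $N\times\{P_{a'}\}$, $I_{B_1}\times\{A_1\}$, $I_{B_2}\times\{A_2\}$ of type $I\times\{Q\}$ and $\{P_{d'}\}\times N$, $\{B_1\}\times I_{A_1}$, $\{B_2\}\times I_{A_2}$ of type $\{P\}\times J$.

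Then it remains to compute $1+\tilde f(C)$ for each component. The two circles coming from a circle $N\subset M$ both have $\tilde f=1+f(N)$, so their contributions cancel in pairs. For a hexagon $C_N$, evaluating $\tilde f(C_N)$ from its definition and substituting $f(I_{A_i})=s(A_i)(|b_i'|_{\cC'}+s(A_i))$, $f(I_{B_i})=s(B_i)(|d'|_{\cC'}+s(B_i))$ and $s(P_{d'})=|d'|_{\cC'}$, then using $|b_i'|_{\cC'}=|d'|_{\cC'}+1$ and the sign-assignment identity $s(B_1)+s(A_1)+s(B_2)+s(A_2)=1$ for the interval $N$ in the framed category $\cC'$, collapses the whole expression to $\tilde f(C_N)=1$, so each hexagon contributes $0$ as well. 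Hence $\sum_C(1+\tilde f(C))=0$ and the compatibility condition holds. I expect the main obstacle to be the bookkeeping in this last case: establishing exactly how the six one-dimensional pieces glue along the points $P_{a'},P_{b_i'},P_{d'}$ into one hexagon, and keeping track of which pieces count as $I\times\{Q\}$ and which as $\{P\}\times J$; once that combinatorial picture is fixed, $\tilde f(C_N)=1$ is a short computation in $\Z/2\Z$.
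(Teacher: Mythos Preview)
Your proposal is correct and follows essentially the same approach as the paper: reduce to the nontrivial case $(\bar a',d')$ with $a',d'\in\Ob(\cC')$ and $|a'|=|d'|+2$, pair up the two circles coming from each circle component of $\M_{\cC'}(a',d')$, and then verify $\tilde f(C_N)=1$ for each hexagon using the explicit formulas for $s(P_{d'})$, $f(I_{A_i})$, $f(I_{B_i})$ together with $u(1+u)=0$ in $\Z/2\Z$ and the sign-assignment identity. Your write-up is in fact slightly more detailed than the paper's in justifying the sign-assignment check and the case reduction.
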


\begin{proof}
It is easy to see that $s$ is indeed a sign assignment. To see that $f$ is a framing, we only need to consider the case of $a,c\in \Ob(\cC')$ with $|a| = |c|+2$. Each circle component $C\subset \M_{\cC'}(a,c)$ leads to two circles $\{P_c\}\times C$ and $C\times \{P_a\}$ in $\partial \M_{\CC\cC_i}(\bar{a},c)$ whose framing contributions cancel each other.

If $I$ is an interval in $\M_{\cC'}(a,c)$ with boundary points $(B_j,A_j)\in \M_{\cC'}(b_j,c)\times \M_{\cC'}(a,b_j)$ for $j=1,2$, the following six intervals form a hexagon in $\partial \M_{\CC\cC_i}(\bar{a},c)$:
\begin{align*}
\{P_c\}\times I & \mbox{ with boundary } (P_c, B_1, A_1) , (P_c, B_2, A_2)\\
I_{B_2}\times \{A_2\} & \mbox{ with boundary } (P_c, B_2, A_2), (B_2, P_{b_2}, A_2) \\
\{B_2\}\times I_{A_2} & \mbox{ with boundary } (B_2, P_{b_2}, A_2), (B_2, A_2, P_a) \\
I \times \{P_a\} & \mbox{ with boundary } (B_2, A_2, P_a), (B_1, A_1, P_a) \\
\{B_1\}\times I_{A_1} & \mbox{ with boundary } (B_1, A_1, P_a), (B_1, P_{b_1}, A_1) \\
I_{B_1}\times \{A_1\} & \mbox { with boundary } (B_1, P_{b_1}, A_1), (P_ c, B_1, A_1)
\end{align*}
The framing values are given by
\begin{align*}
f(I_{B_2}) &= s(B_2)(|c|+s(B_2)) \\
f(I_{A_2}) &= s(A_2)(|b_2|+s(A_2)) \\
f(I_{B_1}) & = s(B_1)(|c|+s(B_1)) \\
f(I_{A_1}) & = s(A_1)(|b_1|+s(A_1)).
\end{align*}
We also get twice $f(I)$, but this will cancel. To satisfy the compatibility condition, we need
\[
s(P_c)+s(B_1)+s(B_2)+f(I_{A_1}) + f(I_{A_2}) + f(I_{B_1}) + f(I_{B_2}) = 0.
\]
Using that $s(P_c) = |c| = |b_j|+1\in \Z/2\Z$, we have
\begin{align*}
\sum_{j=1}^2f(I_{A_j}) + f(I_{B_j})  &= \sum_{j=1}^2 s(A_j)(|c|+1+s(A_j))+s(B_j)(|c|+s(B_j))\\
&= |c| \cdot (s(A_1)+s(A_2)+s(B_1)+s(B_2)) + s(B_1)+s(B_2) \\
& = |c| + s(B_1)+s(B_2),
\end{align*}
since $s$ is a sign assignment and $u(1+u) = 0$ for all $u\in \Z/2\Z$. Therefore each component of $\partial \M_{\CC\cC_i}(\bar{a},c)$ satisfies the compatibility condition, and $f$ is a framing for $\CC\cC_i$.
\end{proof}

We call $\CC\cC_i$ the {\em mapping cone} of the inclusion $i\colon \cC'\to \cC$. It is easy to see that we have a short exact sequence of $1$-flow categories
\[
0\longrightarrow \cC \longrightarrow \CC\cC_i \longrightarrow \mathfrak{S}\cC'\longrightarrow 0
\]
where $\mathfrak{S}\cC'$ is the {\em suspension} of $\cC'$, a $1$-flow category which agrees with $\cC'$ except that for $a\in \Ob(\mathfrak{S}\cC')=\Ob(\cC')$ we have $|a|_{\mathfrak{S}\cC'} =|a|_{\cC'}+1$.

If $\cC''$ is the upward closed subcategory of $\cC$ fitting into a short exact sequence
\[
0 \longrightarrow \cC' \longrightarrow \cC \longrightarrow \cC'' \longrightarrow 0,
\]
it follows from standard homological algebra that for any abelian group $G$
\[
H^k(\CC\cC_i;G) \cong H^k(\cC'';G),
\]
and the connecting homomorphism $\delta^\ast\colon H^k(\cC';G) \to H^{k+1}(\cC'';G)$ agrees with
\[
H^k(\cC';G) \cong H^{k+1}(\mathfrak{S}\cC';G) \stackrel{p^\ast}{\longrightarrow} H^{k+1}(\CC\cC_i;G) \cong H^{k+1}(\cC'';G).
\]
From Lemma \ref{lm:easyfunc} we get naturality of $\Sq^2$ with the connecting homomorphism.

\begin{lemma}\label{lm:funcsqconnect}
Let
\[
0 \longrightarrow \cC' \longrightarrow \cC \longrightarrow \cC'' \longrightarrow 0
\]
be a short exact sequence of framed $1$-flow categories. Then
\[
\delta^\ast\circ \Sq^2 = \Sq^2\circ \delta^\ast\colon H^{k-1}(\cC';\Z/2\Z)\to H^{k+2}(\cC'';\Z/2\Z),
\]
for all $k\in \Z$. \hfill\qed
\end{lemma}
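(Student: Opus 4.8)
The plan is to realize the connecting homomorphism as a composite of maps with which $\Sq^2$ is already known to commute, and then invoke Lemma~\ref{lm:easyfunc} twice. Recall from the discussion preceding the statement that, for every abelian group $G$ and every degree $m$, the connecting homomorphism $\delta^\ast\colon H^m(\cC';G)\to H^{m+1}(\cC'';G)$ factors as
\[
H^m(\cC';G)\ \cong\ H^{m+1}(\mathfrak{S}\cC';G)\ \stackrel{p^\ast}{\longrightarrow}\ H^{m+1}(\CC\cC_i;G)\ \cong\ H^{m+1}(\cC'';G),
\]
where the first map is the suspension isomorphism, the middle map is the $p^\ast$ of the short exact sequence $0\to\cC\to\CC\cC_i\to\mathfrak{S}\cC'\to 0$, and the last is the standard isomorphism attached to the mapping cone. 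Thus it is enough to show that $\Sq^2$ commutes with each of these three maps, and also with the inverses of the two isomorphisms; the identity of the lemma then follows formally by composing in the two relevant degrees.

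First I would check that $\Sq^2$ commutes with the suspension isomorphism. This is immediate from Example~\ref{ex:cocycle} and Definition~\ref{def:sqca}: the special graph structure $\Gamma_\mathcal{C}(a,\varphi)$, a combinatorial boundary matching, the quantities $F(C)$ and $D(C)$, and the loop set $L$ all depend only on the moduli spaces of $\cC'$ together with its sign and frame assignments, not on the absolute value of the grading. Passing from $\cC'$ to $\mathfrak{S}\cC'$ only shifts every grading by $1$ and changes nothing else, so for a cocycle $\varphi$ the cochain $\sq^\varphi$ is unchanged up to reindexing, and $\Sq^2$ therefore commutes with the suspension isomorphism. That $\Sq^2$ commutes with $p^\ast\colon H^{m+1}(\mathfrak{S}\cC';\Z/2\Z)\to H^{m+1}(\CC\cC_i;\Z/2\Z)$ is exactly Lemma~\ref{lm:easyfunc} applied to $0\to\cC\to\CC\cC_i\to\mathfrak{S}\cC'\to 0$.

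The remaining map is the isomorphism $H^{m+1}(\CC\cC_i;\Z/2\Z)\cong H^{m+1}(\cC'';\Z/2\Z)$, and here the idea is to exhibit it as a $p^\ast$ of yet another short exact sequence of framed $1$-flow categories, so that Lemma~\ref{lm:easyfunc} again applies. Inside $\CC\cC_i$ the objects coming from $\cC'$ — the original copies $a$ together with the shifted copies $\bar a$, for $a\in\Ob(\cC')$ — span a full subcategory $\CC\cC'$, namely the mapping cone of the identity of $\cC'$. Inspecting which of the moduli spaces $\M_{\CC\cC_i}(x,y)$ are nonempty (using that $\cC'$ is downward closed in $\cC$, and that a morphism out of some $\bar a$ can only land among the $\bar b$, the $b\in\Ob(\cC')$, and $a$ itself), one sees that $\CC\cC'$ is downward closed in $\CC\cC_i$, with complementary upward closed subcategory equal to $\cC''$. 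Hence
\[
0\longrightarrow\CC\cC'\longrightarrow\CC\cC_i\longrightarrow\cC''\longrightarrow 0
\]
is a short exact sequence of framed $1$-flow categories. Since $\CC\cC'$ is the mapping cone of an identity map, its cochain complex is acyclic, so its $p^\ast\colon H^{m+1}(\cC'';\Z/2\Z)\to H^{m+1}(\CC\cC_i;\Z/2\Z)$ is an isomorphism — and it is precisely the inverse of the mapping-cone isomorphism in the factorization of $\delta^\ast$ above. Lemma~\ref{lm:easyfunc} once more gives that $\Sq^2$ commutes with it.

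I expect the main obstacle to be the bookkeeping in this last step: one must verify carefully that the downward/upward closedness conditions hold for $\CC\cC'$ and $\cC''$ inside $\CC\cC_i$, that the restricted sign and frame assignments turn $\CC\cC'$ into a framed $1$-flow category, and that the $p^\ast$ produced by this short exact sequence is indeed the homological-algebra isomorphism used to define $\delta^\ast$. Granting that, the conclusion is purely formal, obtained by composing the three commutations above.
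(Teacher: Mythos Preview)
Your proposal is correct and follows the paper's approach. The paper's proof is essentially the single sentence ``From Lemma~\ref{lm:easyfunc} we get naturality of $\Sq^2$ with the connecting homomorphism,'' relying on the factorization $H^k(\cC')\cong H^{k+1}(\mathfrak{S}\cC')\xrightarrow{p^\ast}H^{k+1}(\CC\cC_i)\cong H^{k+1}(\cC'')$ stated just before; you have made explicit the one point the paper leaves to the reader, namely that the identification $H^{k+1}(\CC\cC_i)\cong H^{k+1}(\cC'')$ is itself the $p^\ast$ of the short exact sequence $0\to\CC\cC'\to\CC\cC_i\to\cC''\to 0$ (with $\CC\cC'$ the acyclic cone on $\id_{\cC'}$), so that Lemma~\ref{lm:easyfunc} applies a second time.
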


\subsection{Signed covers of the cube $1$-flow category}

We continue our analogous constructions for $1$-flow categories from \cite{MR3230817} by looking at covers. We will only consider covers over the cube $1$-flow category, but we generalize those by allowing more flexibility with signs.

\begin{definition}\label{def:cover}
Denote by $(\cC(n),s,f)$ the cube $1$-flow category with a chosen sign and frame assignment. Also, let $\delta,\varepsilon\in \Z/2\Z$. A framed $1$-flow category $(\cC,s_\cC,f_\cC)$ is called a {\em signed cover of type $(\delta,\varepsilon)$ of $(\cC(n),s,f)$}, if there is a grading preserving function $h\colon \Ob(\cC)\to \Ob(\cC(n))$ such that the following hold:
\begin{enumerate}
\renewcommand{\theenumi}{\alph{enumi}}
\item For all objects $a,b\in \Ob(\cC)$ with $|a|=|b|+1$ there is a covering map $h_0\colon \M_\cC(a,b)\to \M_{\cC(n)}(h(a),h(b))$.
\item For all objects $a,c\in \Ob(\cC)$ with $|a|=|c|+2$ there is a covering map $h_1\colon \M_\cC(a,c) \to \M_{\cC(n)}(h(a),h(c))$ such that
\[
\begin{tikzpicture}
\node at (0,0) {$\M_\cC(a,c)$};
\node at (7,0) {$\M_{\cC(n)}(h(a),h(c))$};
\node at (0,2) {$\bigcup_b \M_\cC(b,c)\times \M_\cC(a,b)$};
\node at (7,2) {$\bigcup_v \M_{\cC(n)}(v,h(c))\times \M_{\cC(n)}(h(a),v)$};
\draw[right hook ->] (0,1.7) -- (0, 0.3);
\draw[right hook ->] (7,1.7) -- (7, 0.3);
\draw[->] (2.1, 2) -- node [above] {$h_0\times h_0$} (4.1, 2);
\draw[->] (1, 0) -- node [above] {$h_1$} (5.4, 0);
\end{tikzpicture}
\]
commutes.
\item For all objects $a,d\in \Ob(\cC)$ with $|a| = |d|+3$ the induced map
\[
h_\ast\colon \partial\M_\cC(a,d) \to \partial \M_{\cC(n)}(h(a),h(d))
\]
is a trivial covering map.
\item For each interval component $I\subset \M_\cC(a,c)$ the framing value $f_\cC(I)$ is obtained from $f(h_1(I))$ via (\ref{eq:framechange}).
\end{enumerate} 
\end{definition}

The sign and framing are only used in part (d), and $f_\cC$ is uniquely determined by (d). In fact, given a sign assignment $s_\cC$ we can use (\ref{eq:framechange}) to define a pre-framing, and this pre-framing is automatically a framing:

\begin{lemma}\label{lm:framecover}
Let $(\cC(n),s,f)$ be a framed cube $1$-flow category, and $(\cC,s_\cC)$ a $1$-flow category with sign assignment $s_\cC$ and $h\colon \Ob(\cC)\to \Ob(\cC(n))$ a grading preserving function such that (a), (b), (c) in Definition \ref{def:cover} are satisfied. The pre-framing $f_\cC$ obtained via (\ref{eq:framechange}) to satisfy (d) turns $(\cC,s_\cC,f_\cC)$ into a framed $1$-flow category which is a signed cover of $(\cC(n),s,f)$.
\end{lemma}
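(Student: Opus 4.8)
The plan is to prove the two substantive assertions — that $f_\cC$ is a genuine pre-framing, and that it satisfies the compatibility condition — after which properties (a)--(d) of Definition~\ref{def:cover} all hold (with (d) true by the very construction of $f_\cC$), so $(\cC,s_\cC,f_\cC)$ is a signed cover of $(\cC(n),s,f)$.

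I would begin with a structural observation about the one-dimensional moduli spaces of $\cC$. For $a,c\in\Ob(\cC)$ with $|a|=|c|+2$, the map $h_1\colon\M_\cC(a,c)\to\M_{\cC(n)}(h(a),h(c))$ is a covering map onto a space which, by the construction of $\cC(n)$, is either empty or a compact interval, hence simply connected; so $\M_\cC(a,c)$ is a finite disjoint union of compact intervals, with no circle components. In particular $f_\cC$ is prescribed by (d) on every component of every one-dimensional moduli space. It remains to see that (\ref{eq:framechange}) genuinely applies to an interval component $I\subset\M_\cC(a,c)$: writing $\partial I=\{(B_1,A_1),(B_2,A_2)\}$, the commuting square in Definition~\ref{def:cover}(b) carries $B_1,A_1,B_2,A_2$ under $h_0$ bijectively onto the four edges of the two-dimensional sub-cube $C_{h(a),h(c)}$, and the values of $s_\cC$ on these four points therefore correspond to the values of a sign assignment on $C_{h(a),h(c)}$. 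As $s$ is also a sign assignment, both quadruples of values sum to $1$ in $\Z/2\Z$, so the two sign assignments disagree on an even number of the four edges, which is exactly the situation covered by the eight cases of (\ref{eq:framechange}). Hence $f_\cC$ is a well-defined pre-framing.

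For the compatibility condition, fix $a,d\in\Ob(\cC)$ with $|a|=|d|+3$. If $\partial\M_{\cC(n)}(h(a),h(d))=\emptyset$, then by Definition~\ref{def:cover}(c) also $\partial\M_\cC(a,d)=\emptyset$ and there is nothing to prove. Otherwise $h(a)\geq_3 h(d)$, the space $\partial\M_{\cC(n)}(h(a),h(d))$ is a single hexagon, namely the boundary hexagon of the three-dimensional sub-cube $C_{h(a),h(d)}$, and by the triviality of the covering in (c) each component $C_k$ of $\partial\M_\cC(a,d)$ is a hexagon mapped homeomorphically onto it. The crucial claim is that $h$ carries each such sheet $C_k$ isomorphically onto $C_{h(a),h(d)}$: within a single $C_k$ there occur exactly one object of $\cC$ in the role of each of the eight cube-objects, exactly one point of each of the twelve relevant zero-dimensional moduli spaces, and exactly one component of each of the six relevant one-dimensional moduli spaces of $\cC$, and — using (b) and the triviality in (c) — their boundary incidences reproduce those of the hexagon $\partial\M_{\cC(n)}(h(a),h(d))$. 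Transporting the restrictions of $s_\cC$ and $f_\cC$ along this isomorphism equips $C_{h(a),h(d)}$ with a sign assignment $\bar{s}$ (it is one because the sign-assignment axiom for $\cC$, applied to the six relevant interval components, gives $\delta\bar{s}=1$ on each two-dimensional face) and a pre-framing $\bar{f}$, and by (d) the pre-framing $\bar{f}$ is precisely what (\ref{eq:framechange}) produces from $f|_{C_{h(a),h(d)}}$ for the sign change $s\mapsto\bar{s}$.

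To finish, I would invoke Lemma~\ref{lm:frame4signchange}: since $(\cC(n),s,f)$ is framed, its restriction to $C_{h(a),h(d)}$ satisfies the compatibility condition there, so viewing $C_{h(a),h(d)}$ as a cube $1$-flow category and applying the lemma shows that $(C_{h(a),h(d)},\bar{s},\bar{f})$ is again framed; for a three-dimensional cube this says exactly $1+\tilde{\bar{f}}(\partial\M)=0$ for its unique boundary hexagon $\partial\M$. Transporting back along the isomorphism gives $1+\tilde{f}_\cC(C_k)=0$ for every sheet, and summing over the sheets of $\partial\M_\cC(a,d)$ yields $\sum_k\bigl(1+\tilde{f}_\cC(C_k)\bigr)=0$, i.e.\ the compatibility condition for $(\cC,s_\cC,f_\cC)$ at $(a,d)$. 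I expect the main obstacle to be the bookkeeping behind the crucial claim of the previous paragraph — rigorously matching a single sheet $C_k$ with a copy of the sub-cube $C_{h(a),h(d)}$, object for object, moduli component for moduli component, and boundary incidence for boundary incidence — since that is the only place where conditions (a), (b), (c) and the triviality of the coverings are all brought to bear at once; once it is in place, the actual computation is entirely subsumed by Lemma~\ref{lm:frame4signchange}.
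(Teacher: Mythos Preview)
Your proposal is correct and follows essentially the same approach as the paper: each component of $\partial\M_\cC(a,d)$ is a hexagon sitting over the hexagon of the three-dimensional sub-cube $C_{h(a),h(d)}$, the restricted sign assignment $s_\cC$ induces a sign assignment on that sub-cube, the framing values are given by (\ref{eq:framechange}), and Lemma~\ref{lm:frame4signchange} delivers the compatibility contribution of zero for each sheet. Your write-up is more explicit than the paper's in two places --- the observation that $\M_\cC(a,c)$ has no circle components (since it covers an interval), and the verification that the four boundary signs of an interval really fall into one of the eight cases of (\ref{eq:framechange}) --- both of which the paper leaves implicit.
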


\begin{proof}
We need to check the compatibility condition. From the definition it follows that any component $C$ in $\partial_\cC(a,d)$, where $|a|=|d|+3$, is a hexagon. Furthermore, there exist $b_i,c_i\in \Ob(\cC)$, $C_i\in \M_\cC(c_i,d)$, $A_i\in \M_\cC(a,b_i)$, and $B_{ji}\in \M_\cC(b_i,c_j)$ for $i,j\in \{1,2,3\}$ with $j\not=i$, such that we have a sub-cube $\mathcal{C}$
\[
\begin{tikzpicture}
\node at (0,0) {$d$};
\node at (3,1.5) {$c_1$};
\node at (3,0) {$c_2$};
\node at (3,-1.5) {$c_3$};
\node at (6,1.5) {$b_3$};
\node at (6,0) {$b_2$};
\node at (6,-1.5) {$b_1$};
\node at (9,0) {$a$};
\draw[<-] (0.3, 0.1) -- node [above, sloped] {$C_1$} (2.7,1.5);
\draw[<-] (0.3, 0) -- node [above, sloped] {$C_2$} (2.7, 0);
\draw[<-] (0.3, -0.1) -- node [above, sloped] {$C_3$} (2.7, -1.5);
\draw[<-] (3.3, 0.1) -- node [above, sloped, near start] {$B_{2,3}$} (5.7, 1.4);
\draw[<-] (3.3, -0.1) -- node [above, sloped, near start] {$B_{2,1}$} (5.7, -1.4);
\draw[-, white, line width = 6pt] (3.3, 1.4) -- (5.7, 0.1);
\draw[-, white, line width = 6pt] (3.3, -1.4) -- (5.7, -0.1);
\draw[<-] (3.3, 1.5) -- node [above] {$B_{1,3}$} (5.7, 1.5);
\draw[<-] (3.3, 1.4) -- node [above, sloped, near end] {$B_{1,2}$} (5.7, 0.1);
\draw[<-] (3.3, -1.4) -- node [above, sloped, near end] {$B_{3,2}$} (5.7, -0.1);
\draw[<-] (3.3, -1.5) -- node [above] {$B_{3,1}$} (5.7, -1.5);
\draw[<-] (6.3, 1.5) -- node [above, sloped] {$A_3$} (8.7, 0.1);
\draw[<-] (6.3, 0) -- node [above] {$A_2$} (8.7, 0);
\draw[<-] (6.3, -1.5) -- node [above, sloped] {$A_1$} (8.7, -0.1);
\end{tikzpicture}
\]
covering a $3$-dimensional sub-cube $C_{u,x}$ of $\cC(n)$. The sign assignment $s_\cC$ restricted to $\mathcal{C}$ induces a sign assignment $s'$ on $C_{u,x}$. The framing values for the intervals in $\mathcal{C}$ are obtained via (\ref{eq:framechange}) and the change of signs on $C_{u,x}$ from $s$ to $s'$. By Lemma \ref{lm:frame4signchange} the component $C$ contributes $0$ to the compatibility condition.
\end{proof}

\section{The signed Burnside category}

In order to define a stable homotopy type for odd Khovanov homology, Sarkar--Scaduto--Stoffregen \cite{MR4078823} introduced the signed Burnside category. We are going to recall their definition.

\subsection{The definition of the signed Burnside category}
\label{ss:burn}
Let $X$ and $Y$ be finite sets. A {\em signed correspondence} between $X$ and $Y$ is a tuple $(A, s_A, t_A, \sigma_A)$ with $A$ a finite set and functions $s_A\colon A\to X$, $t_A\colon A\to Y$ and $\sigma_A\colon A\to \Z/2\Z$. The function $\sigma_A$ is called the {\em sign} of the signed correspondence. Notice that in \cite{MR4078823} the codomain for $\sigma_A$ is $\{\pm 1\}$.

To simplify notation we will often suppress the functions and denote a signed correspondence by the set $A$ only. If $A$ is a signed correspondence from $X$ to $Y$ and $B$ a signed correspondence from $Y$ to $Z$, their composition $B\circ A$ is the signed correspondence $(C,s,t,\sigma)$ given by
\[
C = B\times_Y A = \{(b,a)\in B\times A\mid t_A(a) = s_B(b)\},
\]
with $s(b,a) = s_A(a)$, $t(b,a) = t_B(b)$ and $\sigma(b,a) = \sigma_B(b)+\sigma_A(a)$.

The identity correspondence is given by $(X,\id_X, \id_X, \sigma_0)$ where $\sigma_0\colon X\to \Z/2\Z$ is the constant function $\sigma_0(x) = 0$.

Given two signed correspondences $A, B$ between $X$ and $Y$, a {\em map of signed correspondences} $f\colon A\to B$ is a bijection of sets $f\colon A\to B$ with $s_A = s_B\circ f$, $t_A=t_B\circ f$ and $\sigma_A=\sigma_B\circ f$.

\begin{definition}
The {\em signed Burnside category} $\mathcal{B}_\sigma$ is the weak $2$-category whose objects are finite sets, morphisms are given by signed correspondences, and $2$-morphisms are given by maps of signed correspondences.
\end{definition}

Notice that composition of morphisms is not strictly associative, and composition by the identity correspondence does not fix a signed correspondence. For this reason we resort to weak $2$-categories, see \cite{MR0220789} for more information on them.

To resolve these issues we use the natural $2$-morphisms
\[
\lambda\colon Y\times_Y A \to A \hspace{1cm} \rho\colon A\times_X X \to A,
\]
the {\em left and right unitors} given by $\lambda(y,a) = a$ and $\rho(a,y) = a$. Furthermore, if we have signed correspondences $A$ between sets $W$ and $X$, $B$ between $X$ and $Y$, and $C$ between $Y$ and $Z$, there is the {\em associator}
\[
\alpha\colon (C\times_Y B)\times_X A \to C\times_Y(B\times_X A)
\]
given by $\alpha((c,b),a) = (c,(b,a))$. 

The Burnside category $\mathcal{B}$ as defined in \cite{MR4153651} is the subcategory of $\mathcal{B}_\sigma$ where the signed correspondences have sign functions constant $0$. Alternatively, one can repeat the definition of correspondences without sign function. There is the forgetful functor from $\mathcal{B}_\sigma$ to $\mathcal{B}$ which turns sign functions to $0$. 

\subsection{Functors from the cube category to the signed Burnside category}
\label{sb:functorBurn}
We are only going to consider functors $F\colon \cn \to \Bu$ which are strictly unitary, lax $2$-functors. Such functors are determined by the following data.
\begin{enumerate}
\item For each object $v\in \cn$ an object $F(v) = F_v\in \Ob(\mathcal{B}_\sigma)$.
\item For any $u\geq v$ a signed correspondence $F(\phi_{u,v}) = F_{u,v}$ from $F_u$ to $F_v$ such that $F_{u,u}$ is the identity correspondence.
\item For any $u\geq v\geq w$ a map of signed correspondences $F_{u,v,w}$ from $F_{v,w}\circ F_{u,v}$ to $F_{u,w}$ that agrees with $\lambda$ (respectively, $\rho$) when $v=w$ (respectively, $u=v$), and that satisfies for any $u\geq v\geq w\geq x$ the following commutes.
\[
\begin{tikzpicture}
\node at (0,4) {$(F_{w,x}\times_{F_w} F_{v,w})\times_{F_v}F_{u,v}$};
\node at (6,4) {$F_{w,x}\times_{F_w} (F_{v,w}\times_{F_v}F_{u,v})$};
\node at (0,2) {$F_{v,x}\times_{F_v} F_{u,v}$};
\node at (6,2) {$F_{w,x}\times_{F_w} F_{u,w}$};
\node at (3,0.5) {$F_{u,x}$};
\draw[->] (2.1,4) -- node [above] {$\alpha$} (3.9,4);
\draw[->] (0, 3.7) -- node [right] {$F_{v,w,x}\times \id$} (0, 2.3);
\draw[->] (6, 3.7) -- node [left] {$\id \times F_{u,v,w}$} (6, 2.3);
\draw[->] (0.2, 1.7) -- node [above, sloped] {$F_{u,v,x}$} (2.6, 0.6);
\draw[<-] (3.4, 0.6) -- node [above, sloped] {$F_{u,w,x}$} (5.8, 1.7); 
\end{tikzpicture}
\]
\end{enumerate}
See \cite[\S 3.5]{MR4078823} and \cite[\S 4]{MR4153651} for how this fits in with the general theory of functors between weak $2$-categories.

In \cite[\S 4.3]{MR4153651} it is shown how to construct a framed flow category from a functor $F\colon \cn\to \mathcal{B}$ to the (unsigned) Burnside category. We now adapt this construction to get a framed $1$-flow category $\cC_F$ from a functor $F\colon \cn \to \mathcal{B}_\sigma$. This $1$-flow category is going to be a signed cover of $\cC(n)$ with a fixed sign and frame assignment $s$ and $f$.

The object set $\Ob(\cC_F)$ is the disjoint union of all $F_v$ with $v\in \Ob(\cn)$ and the grading $|\cdot |\colon \Ob(\cC_F)\to\Z$ restricted to $F_v$ is $|v|$. Sending $F_v$ to $v$ gives the grading preserving function $h$ on objects.

If $a,b\in \Ob(\cC_F)$ with $a\in F_u$ and $b\in F_v$ with $u\geq_1 v$, we set
\[
\M_{\cC_F}(a,b) = s_{F_{u,v}}^{-1}(\{a\}) \cap t_{F_{u,v}}^{-1}(\{b\}) \subset F_{u,v}.
\]
All other $0$-dimensional moduli spaces are empty. Clearly condition (a) in Definition \ref{def:cover} is satisfied.

For $P\in \M_{\cC_F}(a,b)$ the sign assignment $s_F$ for $\cC_F$ is defined by
\[
s_F(P) = s(C_{u,v})+\sigma_{F_{u,v}}(P)\in \Z/2\Z.
\]
If $a,c\in \Ob(\cC_F)$ with $a\in F_u$ and $c\in F_w$ with $u\geq_2 w$, we set
\[
\M_{\cC_F}(a,c) = s_{F_{u,w}}^{-1}(\{a\}) \cap t_{F_{u,w}}^{-1}(\{c\}) \times \M_{\cC(n)}(u,w)\subset F_{u,w} \times \M_{\cC(n)}(u,w).
\]
Notice that $\M_{\cC(n)}(u,w)$ is a compact interval. All other $1$-dimensional moduli spaces are empty. From the functoriality of $F$ we get that (b) in Definition \ref{def:cover} is satisfied.

If $u\geq_2 w$, then there exist exactly two objects $v_1,v_2$ with $u\geq_1 v_1,v_2 \geq_1 w$. Furthermore, for $i=1,2$ we have the bijections $F_{u,v_i,w}\colon F_{v_i,w}\times_{F_{v_i}} F_{u,v_i} \to F_{u,w}$ which determines a bijection 
\[
\coprod_{b\in F_{{v_i}}}\M_{\cC_F}(b,c)\times M_{\cC_F}(a,b) \to s_{F_{u,w}}^{-1}(\{a\}) \cap t_{F_{u,w}}^{-1}(\{c\})
\]
for all $a\in F_u$, $c\in F_w$. Combining these for $i=1$ and $2$ determines $\partial \M_{\cC_F}(a,c)$. Notice that points $(B,A)\in s^{-1}(\{a\})\cap t^{-1}(\{c\})\subset F_{v_i,w}\times_{F_{v_i}} F_{u,v_i}$ with $t_{F_{v_i,w}}(B) = b = s_{F_{u,v_i}}(A)$ are also in $\M_{\cC_F}(b,c)\times M_{\cC_F}(a,b)$.

Each $X\in s_{F_{u,w}}^{-1}(\{a\}) \cap t_{F_{u,w}}^{-1}(\{c\})$ gives an interval component in $\M_{\cC_F}(a,c)$, and if we write $X_i = F^{-1}_{u,v_i,w}(X)$ for $i=1,2$, these two points determine two points $(B_i,A_i)\in \M_{\cC_F}(b_i,c)\times \M_{\cC_F}(a,b_i)$ with $b_i\in F_{v_i}$ that represent the boundary of the interval.

Functoriality ensures that
\[
\sigma_{F_{v_1,w}}(B_1) + \sigma_{F_{u,v_1}}(A_1) = \sigma_{F_{v_2,w}}(B_2) + \sigma_{F_{u,v_2}}(A_2) = \sigma_{F_{u,w}}(X),
\]
so that $s_F$ is indeed a sign assignment.

Furthermore, the $\sigma$ values of the four points $B_1,A_1,B_2,A_2$ determine how $s$ differs from $s_F$ on this two-dimensional cube. Note that $s_F$ gives a sign assignment on the 2-dimensional sub-cube $C_{u,w}$. We can extend $s_F$ to a sign assignment of $\cC(n)$, for example, by treating the two coordinates in $C_{u,w}$ as the first two coordinates of the $n$-dimensional cube, and then using the usual product formula as in the standard assignment $s^\ast$.

For a fixed choice of $\delta, \varepsilon\in \Z/2\Z$ we frame the interval $I_X$ corresponding to $X\in s_{F_{u,w}}^{-1}(\{a\}) \cap t_{F_{u,w}}^{-1}(\{c\})$ according to (\ref{eq:framechange}).

By Lemma \ref{lm:framecover} it remains to show that (c) in Definition \ref{def:cover} is satisfied. Let $x,u\in \Ob(\cn)$ with $u\geq_3 x$. There exist $v_1,v_2,v_3,w_1,w_2,w_3\in \Ob(C_{u,x})$ which together with $x,u$ span this subcube.
\[
\begin{tikzpicture}
\node at (0,0) {$x$};
\node at (2,1.25) {$w_1$};
\node at (2,0) {$w_2$};
\node at (2,-1.25) {$w_3$};
\node at (4,1.25) {$v_3$};
\node at (4,0) {$v_2$};
\node at (4,-1.25) {$v_1$};
\node at (6,0) {$u$};
\draw[<-] (0.3, 0.1) --  (1.7,1.25);
\draw[<-] (0.3, 0) --  (1.7, 0);
\draw[<-] (0.3, -0.1) -- (1.7, -1.25);
\draw[<-] (2.3, 0.1) --  (3.7, 1.15);
\draw[<-] (2.3, -0.1) --  (3.7, -1.15);
\draw[-, white, line width = 6pt] (2.3, 1.15) -- (3.7, 0.1);
\draw[-, white, line width = 6pt] (2.3, -1.15) -- (3.7, -0.1);
\draw[<-] (2.3, 1.25) --  (3.7, 1.25);
\draw[<-] (2.3, 1.15) -- (3.7, 0.1);
\draw[<-] (2.3, -1.15) -- (3.7, -0.1);
\draw[<-] (2.3, -1.25) --  (3.7, -1.25);
\draw[<-] (4.3, 1.25) --  (5.7, 0.1);
\draw[<-] (4.3, 0) --  (5.7, 0);
\draw[<-] (4.3, -1.25) --  (5.7, -0.1);
\end{tikzpicture}
\]
Let $a\in F_u$ and $d\in F_x$. We need to describe $\partial\M_{\cC_F}(a,d)$. Write
\[
\partial(a,d) = s^{-1}_{F_{u,x}}(\{a\})\cap t^{-1}_{F_{u,x}}(\{d\}) \subset F_{u,x}.
\]
By functoriality, there is a bijection between $\partial(a,d)$ and
\[
s^{-1}(\{a\}) \cap t^{-1}(\{d\}) \subset F_{w_i,x}\times_{F_{w_i}} F_{u,w_i}
\]
for each $i=1,2,3$, and also with
\[
s^{-1}(\{a\}) \cap t^{-1}(\{d\}) \subset F_{v_j,x}\times_{F_{v_j}} F_{u,v_j}
\]
for each $j=1,2,3$. 

In particular, given $H\in \partial(a,d)$ there exists $b_j\in F_{v_j}$ and a $H_{b_j}$ corresponding to an interval component in $\M_{\cC_F}(b_j,d)\times \M_{\cC_F}(a,b_j)$ for each $j\in \{1,2,3\}$, and there exists $c_i\in F_{w_i}$ and $H_{c_i}$ corresponding to an interval component in $\M_{\cC_F}(c_i,d)\times \M_{\cC_F}(a,c_i)$.

If $i\not=j$, we also get a bijection with
\[
s^{-1}(\{a\}) \cap t^{-1}(\{d\}) \subset F_{w_i,x}\times_{F_{w_i}} F_{v_j,w_i}\times_{F_{v_j}} F_{u,v_j},
\]
and by the commuting pentagon in the definition of the functor $F$ the $H\in \partial(a,d)$ corresponds to an element
\[
H_{b_j,c_i}\in \M_{\cC_F}(c_i,d)\times \M_{\cC_F}(b_j,c_i)\times \M_{\cC_F}(a,b_j),
\]
and these elements form the boundaries of the intervals corresponding to the $H_{c_i}$ and $H_{b_j}$. If we write $H_{b_j,c_i} = (C_i,B_{i,j}, A_j)$ for $i\not=j$, we now get a cube 
\[
\begin{tikzpicture}
\node at (0,0) {$d$};
\node at (3,1.5) {$c_1$};
\node at (3,0) {$c_2$};
\node at (3,-1.5) {$c_3$};
\node at (6,1.5) {$b_3$};
\node at (6,0) {$b_2$};
\node at (6,-1.5) {$b_1$};
\node at (9,0) {$a$};
\draw[<-] (0.3, 0.1) -- node [above, sloped] {$C_1$} (2.7,1.5);
\draw[<-] (0.3, 0) -- node [above, sloped] {$C_2$} (2.7, 0);
\draw[<-] (0.3, -0.1) -- node [above, sloped] {$C_3$} (2.7, -1.5);
\draw[<-] (3.3, 0.1) -- node [above, sloped, near start] {$B_{2,3}$} (5.7, 1.4);
\draw[<-] (3.3, -0.1) -- node [above, sloped, near start] {$B_{2,1}$} (5.7, -1.4);
\draw[-, white, line width = 6pt] (3.3, 1.4) -- (5.7, 0.1);
\draw[-, white, line width = 6pt] (3.3, -1.4) -- (5.7, -0.1);
\draw[<-] (3.3, 1.5) -- node [above] {$B_{1,3}$} (5.7, 1.5);
\draw[<-] (3.3, 1.4) -- node [above, sloped, near end] {$B_{1,2}$} (5.7, 0.1);
\draw[<-] (3.3, -1.4) -- node [above, sloped, near end] {$B_{3,2}$} (5.7, -0.1);
\draw[<-] (3.3, -1.5) -- node [above] {$B_{3,1}$} (5.7, -1.5);
\draw[<-] (6.3, 1.5) -- node [above, sloped] {$A_3$} (8.7, 0.1);
\draw[<-] (6.3, 0) -- node [above] {$A_2$} (8.7, 0);
\draw[<-] (6.3, -1.5) -- node [above, sloped] {$A_1$} (8.7, -0.1);
\end{tikzpicture}
\]
and this cube gives rise to a hexagon in $\partial\M_{\cC_F}(a,d)$ which trivially covers the corresponding hexagon of the sub-cube $C_{u,x}$.  Varying the elements of $\partial(a,d)$ gives rise to the trivial covering map $\partial\M_{\cC_F}(a,d) \to \partial\M_{\cC(n)}(u,x)$ needed for (c) of Definition \ref{def:cover}.

By Lemma \ref{lm:framecover} we now have a framed $1$-flow category $(\cC_F,s_F,f_F)$.

From this framed $1$-flow category we now get the second Steenrod square. To indicate the various choices we made in the definition of $\cC_F$, let us denote it by $\Sq^2_{s,f,\delta,\varepsilon}$. Luckily, we can remove most of these dependencies. Indeed, this can be done more generally for signed covers. We begin with a construction of a mapping cone.

\begin{definition}
Let $\cC$ be a $1$-flow category. The {\em mapping cone} $\cCC$ is the $1$-flow category with $\Ob(\cCC) = \Ob(\cC)\times\{0,1\}$. We write $a_i=(a,i)$ for $a\in \Ob(\cC)$ and $i\in \{0,1\}$, and set
\[
|a_i|_\cCC = |a|_{\cC}+i.
\]
If $a,b\in \Ob(\cC)$ let
\[
\M_\cCC(a_i,b_i) = \M_{\cC}(a,b)
\]
for $i=0,1$. We also set $\M_\cCC(a_1,a_0) = \{P_a\}$. If $a,b\in \Ob(\cC)$ with $|a|_{\cC}=|b|_{\cC}+1$ we set
\[
\M_\cCC(a_1,b_0) = \M_{\cC}(a,b)\times I.
\]
For $A\in \M_{\cC}(a,b)$ we write $I_A = \{A\}\times I$ for the corresponding interval component.

Moduli spaces of the form $\M_\cCC(a_0,b_1)$ are empty.
\end{definition}

It is easy to see that $\cCC$ is a $1$-flow category, and fits into a short exact sequence of $1$-flow categories
\[
0 \longrightarrow \cC \longrightarrow \cCC \longrightarrow \mathfrak{S}\cC \longrightarrow 0
\]
Indeed, we can think the mapping cone as $\cCC_{\id}$ in the sense of Section \ref{ssc:ses}. Note that we have not used any sign or frame assignments yet. Nevertheless, there is a long exact sequence of cohomology groups with $\Z/2\Z$ coefficients, and the connecting homomorphism
\[
\delta^\ast\colon H^k(\cC_F;\Z/2\Z)\to H^{k+1}(\mathfrak{S}\cC_F';\Z/2\Z)\cong H^k(\cC_F;\Z/2\Z)
\]
is the identity.

Below $\cC$ will have two different framings which can be used to frame the subcategories $\cC$ and $\mathfrak{S}\cC$ of $\cCC$. We then still need to choose a sign for $P_a\in \M_\cCC(a_1,a_0)$ and framings for intervals $I_A\subset \M_\cCC(a_1,b_0)$.

\begin{remark}\label{rm:hexagon}
Assume that a sign assignment $s_\cCC$ and a pre-framing $f_\cCC$ have been chosen on $\cCC$ such that the restriction to $\cC$ and $\mathfrak{S}\cC$ are framings. Checking that $f_\cCC$ is a framing is then reduced to considering $\partial\M_\cCC(a_1,c_0)$, where $a,c\in \Ob(\cC)$ with $|a|_\cC = |c|_\cC+2$. Any interval component $I\subset \M_\cC(a,c)$ gives rise to a hexagon $H_I$ as follows. There exist $b,b'\in \Ob(\cC_F)$ with $\partial I$ the two points $(B,A)\in \M_{\cC_F}(b,c)\times \M_{\cC_F}(a,b)$ and $(B',A')\in \M_{\cC_F}(b',c)\times \M_{\cC_F}(a,b')$. The hexagon is then made up of the six intervals
\begin{align*}
I_0\times \{P_a\}, &\hspace{0.5cm} \{B_0\}\times I_A \\
I_B\times \{A_1\}, &\hspace{0.5cm} \{B'_0\}\times I_{A'}\\
I_{B'}\times \{A'_1\}, &\hspace{0.5cm} \{P_c\}\times I_1,
\end{align*}
and we need to check the sum
\begin{multline*}
t_I = s_\cCC(B_0) + s_\cCC(B'_0) + s_\cCC(P_c) + \\
f_\cCC(I_0) + f_\cCC(I_1) + f_\cCC(I_B) + f_\cCC(I_{B'}) + f_\cCC(I_A) + f_\cCC(I_{A'}).
\end{multline*}
Provided that all $1$-dimensional moduli spaces $\M_\cC(a,c)$ only contain interval components, having all $t_I=0$ is sufficient for the compatibility condition to be satisfied.
\end{remark}

\begin{lemma}\label{lm:nodeltaissue}
The second Steenrod square
\[
\Sq^2_{s,f,\delta,\varepsilon}\colon H^\ast(\cC_F;\Z/2\Z)\to H^{\ast+2}(\cC_F;\Z/2\Z)
\]
does not depend on $\delta\in\Z/2\Z$.
\end{lemma}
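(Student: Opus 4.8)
The plan is to realise the two framings as restrictions of a single framed mapping cone and then quote Lemma~\ref{lm:funcsqconnect}. Write $f_\delta$ for the framing of $\cC_F$ built from $(s,f,\delta,\varepsilon)$ via (\ref{eq:framechange}); the sign assignment $s_F$ does not involve $\delta$, so the claim is that $\Sq^2_{(\cC_F,s_F,f_0)}=\Sq^2_{(\cC_F,s_F,f_1)}$. I would take the mapping cone $\cCC_F$ of $\cC_F$ and frame it so that the downward closed copy $\cC_F\subset\cCC_F$ carries $f_1$ while the upward closed copy $\mathfrak{S}\cC_F\subset\cCC_F$ carries $f_0$ (a framing of $\cC$ is automatically a framing of $\mathfrak{S}\cC$, since the compatibility condition is insensitive to a uniform grading shift). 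Granting that this can be done, Lemma~\ref{lm:funcsqconnect} applied to $0\to\cC_F\to\cCC_F\to\mathfrak{S}\cC_F\to 0$ gives $\delta^\ast\circ\Sq^2=\Sq^2\circ\delta^\ast$; since the connecting homomorphism $H^k(\cC_F;\Z/2\Z)\to H^{k+1}(\mathfrak{S}\cC_F;\Z/2\Z)\cong H^k(\cC_F;\Z/2\Z)$ is the identity, and $\sq^\varphi$ (hence $\Sq^2$) is unaffected by a uniform shift of grading, this forces $\Sq^2_{f_1}=\Sq^2_{f_0}$, which is exactly the statement.

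So the real work is to choose a sign $s_\cCC(P_a)$ for each $P_a\in\M_\cCC(a_1,a_0)$ and a framing value $f_\cCC(I_A)$ for each interval $I_A\subset\M_\cCC(a_1,b_0)$ making $(\cCC_F,s_\cCC,f_\cCC)$ framed. By Remark~\ref{rm:hexagon} (applicable because every $1$-dimensional moduli space of $\cC_F$ is a disjoint union of intervals) this reduces to verifying $t_I=0$ for each interval component $I\subset\M_{\cC_F}(a,c)$, say with $\partial I=\{(B,A),(B',A')\}$. The sign-assignment condition for the intervals in $\M_\cCC(a_1,b_0)$ forces $s_\cCC(P_a)=|a|\bmod 2$. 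The ``generic'' choice $f_\cCC(I_A)=s_F(A)(|b|+s_F(A))$ used for the cone $\CC\cC_i$ earlier equals $|a|\,s_F(A)\bmod 2$ and, around the hexagon $H_I$, contributes $|c|+s_F(B)+s_F(B')$ to $t_I$; that cancels every term of $t_I$ except $f_\cCC(I_0)+f_\cCC(I_1)=f_1(I)+f_0(I)$. Hence I must identify $f_1(I)+f_0(I)$ explicitly and absorb it by a correction to $f_\cCC$ on cone intervals.

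The key computation is to read off from (\ref{eq:framechange}) that $f_1(I)+f_0(I)=1$ exactly in its cases four through seven. Writing $\sigma(P)=s_F(P)+s(h_0(P))$ for the ``sign defect'' of a point $P$ in a $0$-dimensional moduli space of $\cC_F$, and using that the $s$-values, and separately the $s_F$-values, around a $2$-face each sum to $1$, a short check over the possible defect patterns $(\sigma(A),\sigma(B),\sigma(A'),\sigma(B'))$ yields $f_1(I)+f_0(I)=(\sigma(A)+\sigma(A'))(\sigma(B)+\sigma(B'))=\sigma(B)+\sigma(B')$, the last equality because $\sigma(A)+\sigma(B)+\sigma(A')+\sigma(B')=0$. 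Since the source of $A$ sits one degree above the source of $B$, adding $|a|\,\sigma(A)$ to the generic value contributes exactly $\sigma(B)+\sigma(B')$ to $t_I$. Thus $f_\cCC(I_A):=s_F(A)(|b|+s_F(A))+|a|\,\sigma(A)=|a|\cdot s(h_0(A))\bmod 2$ makes all $t_I=0$, so $\cCC_F$ is framed and the lemma follows from the first paragraph.

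The main obstacle I anticipate is precisely this last step: finding the closed form $f_1(I)+f_0(I)=\sigma(B)+\sigma(B')$ and recognising the right-hand side as something that comes from a function of single edges of $\cC_F$ (namely $|a|\,\sigma(A)$), so that it can legitimately be hidden inside a mapping-cone framing. Everything else — the sign-assignment bookkeeping forcing $s_\cCC(P_a)=|a|$, the hexagon computation for the generic framing term, and the final appeal to Lemma~\ref{lm:funcsqconnect} together with the identification of the connecting homomorphism — is routine.
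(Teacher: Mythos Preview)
Your proof is correct and follows exactly the same strategy as the paper: frame the mapping cone so that the two copies of $\cC_F$ carry the $\delta=0$ and $\delta=1$ framings, then invoke Lemma~\ref{lm:funcsqconnect} together with the fact that the connecting homomorphism is the identity. Your cone-interval framing $f_\cCC(I_A)=|a|\,s(h_0(A))$ differs from the paper's choice $f_\cCC(I_A)=s_F(A)+s(h_0(A))\,|b|$ only by the edge function $\sigma(A)$, and both make $t_I=0$ via the same hexagon calculation; the paper phrases the key identity as $f_0(I)+f_1(I)=\sigma(A)+\sigma(A')$, which is your $\sigma(B)+\sigma(B')$ after using $\sigma(A)+\sigma(B)+\sigma(A')+\sigma(B')=0$.
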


\begin{proof}
Let us assume that $(\cC_F,s_F,f_F)$ is a type $(0,\varepsilon)$ signed cover of $(\cC(n),s,f)$, and denote by $(\cC_F',s_F,f'_F)$ the corresponding signed cover of type $(1,\varepsilon)$.

We want to frame the mapping cone $\mathfrak{C}\cC$ so that it fits into a short exact sequence
\begin{equation}\label{eq:conesequence}
0\longrightarrow \cC_F \longrightarrow \mathfrak{C}\cC\longrightarrow \mathfrak{S}\cC_F'\longrightarrow 0
\end{equation}
of framed $1$-flow categories. For this we identify $\cC_F$ with the downward closed subcategory generated by $\Ob(\cC_F)\times \{0\}$, and $\mathfrak{S}\cC_F'$ with the upward closed subcategory generated by $\Ob(\cC_F)\times \{1\}$. The moduli spaces in these subcategories are signed and framed accordingly.

Since $\cC_F$ and $\cC_F'$ use the same sign assignment, we get a sign assignment on $\cCC$ by setting
\[
s_\cCC(P_a) = |a|_{\cC_F}
\]
for $P_a\in \M_\cCC(a_1,a_0)$. Finally, for $A\in \M_{\cC_F}(a,b)$ we set
\[
f_\cCC(I_A) = s_F(A) + s(h_0(A))\cdot |b| \in \Z/2\Z.
\]
Recall that $h$ is the covering map from $\cC_F$ to $\cC(n)$.

%
%
%
%
%
%
%
It remains to show that the compatibility condition is satisfied. Consider a hexagon $H_I$ as in Remark \ref{rm:hexagon}, where $I\subset \M_{\cC_F}(a,c)$ is an interval component. Then
\begin{multline*}
t_I = s_\cCC(B_0) + s_\cCC(B'_0) + s_\cCC(P_c) + \\
f_\cCC(I_0) + f_\cCC(I_1) + f_\cCC(I_B) + f_\cCC(I_{B'}) + f_\cCC(I_A) + f_\cCC(I_{A'}).
\end{multline*}
Notice that $f_\cCC(I_0)+f_\cCC(I_1)$ is $1$, if (\ref{eq:framechange}) involves a $\delta$, and $0$ if it does not. Checking the various cases, we see that
\[
f_\cCC(I_0)+f_\cCC(I_1) = s_F(A)+s(h_0(A))+s_F(A')+s(h_0(A')).
\]
From this we get
\begin{multline*}
t = s_F(B)+s_F(B')+|c|+s_F(A)+s(h_0(A))+s_F(A')+s(h_0(A')) \\
+s_F(B)+s(h_0(B))|c| + s_F(B')+s(h_0(B'))|c|+\\s_F(A)+s(h_0(A))(|c|+1)+s_F(A')+s(h_0(A'))(|c|+1)
\end{multline*}
All of the $s_F(X)$, $X\in \{B,B',A,A'\}$, appear twice so we get
\begin{multline*}
t = |c| (1+s(h_0(B))+s(h_0(B'))+s(h_0(A))+s(h_0(A'))) + \\
s(h_0(A)) + s(h_0(A')) + s(h_0(A)) + s(h_0(A')).
\end{multline*}
Since $s$ is a sign assignment, $t=0$. This shows that the compatibility condition is satisfied, and by Lemma \ref{lm:funcsqconnect} the result follows.
\end{proof}

We will from now on choose $\delta = 0$ in the construction of the Steenrod square.

\begin{lemma}
The second Steenrod square
\[
\Sq^2_{s,f,\varepsilon}\colon H^\ast(\cC_F;\Z/2\Z)\to H^{\ast+2}(\cC_F;\Z/2\Z)
\]
does not depend on the frame assignment $f$ for $\cC(n)$.
\end{lemma}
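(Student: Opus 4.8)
The plan is to mimic the structure of Lemma \ref{lm:nodeltaissue}: compare the Steenrod squares arising from two different frame assignments $f$ and $f'$ on $\cC(n)$ by building a suitable mapping cone $1$-flow category that interpolates between the corresponding signed covers $\cC_F$ and $\cC_F'$, and then invoke Lemma \ref{lm:funcsqconnect} to conclude that the two operators agree. First I would use Lemma \ref{lm:changefrm}: since $f$ and $f'$ are both frame assignments for $(\cC(n),s)$, there is a cochain $g\in C^1(\cn;\Z/2\Z)$ with $f+f' = \delta(g)$, and it suffices to treat the case $g = x^\ast$, the dual of a single object $x$ of $\cC(n)$ (so that $f$ and $f'$ differ precisely on the $2$-dimensional sub-cubes containing the edge(s) affected by $\delta(x^\ast)$); the general case then follows by iterating. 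I would then note that, using the same sign assignment $s_F$ and $h$, the signed cover construction produces $\cC_F$ over $(\cC(n),s,f)$ and $\cC_F'$ over $(\cC(n),s,f')$ with the same objects, gradings, $0$-dimensional moduli spaces, and sign assignment; only the framing values on the interval components of the $1$-dimensional moduli spaces differ, and by part (d) of Definition \ref{def:cover} they differ in a controlled way governed by where $g$ changes $f$.

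The heart of the argument is to frame the mapping cone $\cCC$ so that it fits into a short exact sequence
\[
0 \longrightarrow \cC_F \longrightarrow \cCC \longrightarrow \mathfrak{S}\cC_F' \longrightarrow 0
\]
of framed $1$-flow categories, with $\cC_F$ the downward closed subcategory on $\Ob(\cC_F)\times\{0\}$ and $\mathfrak{S}\cC_F'$ the upward closed one on $\Ob(\cC_F)\times\{1\}$. As in Lemma \ref{lm:nodeltaissue} I would set $s_\cCC(P_a) = |a|_{\cC_F}$, and I would define $f_\cCC(I_A)$ for $A\in \M_{\cC_F}(a,b)$ by a formula of the shape $f_\cCC(I_A) = (\text{something depending on } g(h_0(A)) \text{ or on } g \text{ applied to the edge of } \cC(n) \text{ below } A) \cdot (\text{a term involving } |b| \text{ and } s_F(A))$, chosen precisely so that for each hexagon $H_I$ of Remark \ref{rm:hexagon} the quantity $f_\cCC(I_0) + f_\cCC(I_1)$ equals the difference between the framing values of the interval $I$ in $\cC_F$ and in $\cC_F'$ (i.e.\ the correction term coming from (\ref{eq:framechange}) applied with the sign change encoded by $\delta(g)$ on the relevant $2$-dimensional sub-cube). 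With that identity in hand, the sum $t_I$ in Remark \ref{rm:hexagon} telescopes: the $s_F$-terms of $B,B',A,A'$ each appear twice and cancel, and the remaining terms collapse to $0$ because $s$ is a sign assignment (so the four $s$-values on the boundary of the $2$-dimensional sub-cube sum to $1$) together with the identity $u(1+u)=0$ in $\Z/2\Z$, exactly as in the proof of Lemma \ref{lm:nodeltaissue}.

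Once the compatibility condition $t_I = 0$ is verified for every hexagon, Remark \ref{rm:hexagon} gives that $(\cCC, s_\cCC, f_\cCC)$ is a framed $1$-flow category, and the short exact sequence above is one of framed $1$-flow categories. Since the connecting homomorphism $\delta^\ast\colon H^k(\cC_F;\Z/2\Z)\to H^{k+1}(\mathfrak{S}\cC_F';\Z/2\Z) \cong H^k(\cC_F';\Z/2\Z)$ is the identity (the underlying complexes of $\cC_F$ and $\cC_F'$ coincide), Lemma \ref{lm:funcsqconnect} yields $\delta^\ast\circ\Sq^2_{s,f'} = \Sq^2_{s,f}\circ\delta^\ast$, i.e.\ $\Sq^2_{s,f'} = \Sq^2_{s,f}$ under this identification. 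Iterating over a decomposition of $g$ into duals of single objects completes the proof. The main obstacle I expect is pinning down the precise formula for $f_\cCC(I_A)$: one must unwind exactly which line of (\ref{eq:framechange}) is triggered by the sign change $\delta(g)$ on each $2$-dimensional sub-cube — recalling that with $\delta=0$ fixed the relevant corrections are $1$, $c+d$, $a$, $b$, or $a+b$ — and check that a single closed formula for $f_\cCC(I_A)$ produces $f_\cCC(I_0)+f_\cCC(I_1)$ equal to the correct correction in every case simultaneously; this is a finite but slightly delicate case check of the same flavour as Lemma \ref{lm:frame4signchange}.
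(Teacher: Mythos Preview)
Your overall strategy---build a framed mapping cone $\cCC$ interpolating between the two signed covers and apply Lemma \ref{lm:funcsqconnect}---is exactly what the paper does. However, there is a genuine confusion in your reduction step and in your anticipated ``main obstacle''.

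First, the cochain $g$ lives in $C^1(\cn;\Z/2\Z)$, so its elementary summands are duals $C^\ast_{u,v}$ of \emph{edges} (1-dimensional sub-cubes), not duals $x^\ast$ of objects. The paper reduces to the case $f+f'=\delta(C^\ast_{u,v})$ for a single edge $C_{u,v}$. Your phrase ``the dual of a single object $x$ of $\cC(n)$'' is the reduction used in the sign-change lemmas (where one perturbs $s$ by $\delta(x)$ for a vertex $x$), and it does not apply here.

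Second---and this is the more important point---there is \emph{no sign change} in this lemma. Both $f$ and $f'$ are frame assignments for the \emph{same} sign assignment $s$, so the correction terms from (\ref{eq:framechange}) are identical in the construction of $f_F$ and of $f'_F$. Consequently
\[
f_F(I)+f'_F(I)=f(h_1(I))+f'(h_1(I))=\delta(C^\ast_{u,v})(h_1(I)),
\]
which is $1$ precisely when the edge $C_{u,v}$ is one of the four edges of the $2$-cube $h_1(I)$, i.e.\ when exactly one of $B,B',A,A'$ lies over $C_{u,v}$. No case analysis of the eight lines of (\ref{eq:framechange}) is needed at all. The paper then takes the explicit formula
\[
f_\cCC(I_A)=s_F(A)\cdot|a|+\begin{cases}1&\text{if }h(a)=u,\ h(b)=v,\\0&\text{otherwise},\end{cases}
\]
so that the extra ``$+1$'' appears on exactly the one interval among $I_B,I_{B'},I_A,I_{A'}$ sitting over $C_{u,v}$, matching $f_F(I)+f'_F(I)$, and the hexagon sum $t_I$ collapses as in Lemma \ref{lm:nodeltaissue}. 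Once you correct the decomposition of $g$ and drop the spurious appeal to (\ref{eq:framechange}), your outline becomes the paper's proof and is considerably shorter than you anticipate.
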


\begin{proof}
The proof is similar to the proof of Lemma \ref{lm:nodeltaissue}. If we denote by $(\cC'_F,s_F, f'_F)$ the framed $1$-flow category obtained by using a different framing $f'$ for $(\cC(n),s)$, we want to frame $\cCC$ so it fits into (\ref{eq:conesequence}). By Lemma \ref{lm:changefrm} we may assume that $f+f' = \delta(C^\ast_{u,v})$ for a $1$-dimensional sub-cube $C_{u,v}$ of $\cC(n)$.

The sign and frame assignment is the same as in Lemma \ref{lm:nodeltaissue}, except that we frame  intervals $I_A$ in $\M_\cCC(a_1,b_0)$, where $A\in \M_{\cC_F}(a,b)$ by
\[
f_\cCC(I_A) = \left\{
\begin{array}{cl}
s_F(A)\cdot |a| + 1 & \mbox{for }h(a) = u, h(b) = v\\
s_F(A)\cdot |a| & \mbox{otherwise}
\end{array}
\right.
\]
To check the compatibility condition consider an interval $I\subset \M_{\cC_F}(a,c)$ with boundary points  $(B,A)\in \M_{\cC_F}(b,c)\times \M_{\cC_F}(a,b)$ and $(B',A')\in \M_{\cC_F}(b',c)\times \M_{\cC_F}(a,b')$ for some $b,b'\in \Ob(\cC_F)$.

At most one of the four points $B,B',A,A'$ is over the cube $C_{u,v}$, and we have $f_F(I) + f'_F(I) = 1$ if and only if this is the case for one of these points. In this case this point $X$ contributes an extra $1$ in $f_\cCC(I_X)$. A calculation similar to the one in the proof of Lemma \ref{lm:nodeltaissue} now shows that $t_I=0$, and therefore the compatibility condition is satisfied. The result follows as before.
\end{proof}

Before we show that the second Steenrod square does not depend on the choice of sign assignment for $\cC(n)$, let us consider a more general situation.

Let $(\cC, s_\cC, f_\cC)$ be a type $(0,\varepsilon)$-signed cover of $(\cC(n),s,f)$ and let $x\in \Ob(\cC)$. Define a new sign assignment $s_x$ as follows. Let $A\in \M_\cC(a,b)$ with $|a|=|b|+1$. If  $a\not=x\not=b$, then $s_x(A) = s_\cC(A)$. If $x=a$ or $x=b$, let $s_x(A) = s_\cC(A)+1$. It is easy to see that this is indeed a sign assignment on $\cC$.

With this new sign assignment we can define a new frame assignment $f_x$ using (\ref{eq:framechange}) with $\delta=0$. By Lemma \ref{lm:framecover} $(\cC,s_x,f_x)$ is a type $(0,\varepsilon)$-signed cover of $(\cC(n),s,f)$. We call $(\cC,s_x,f_x)$ {\em obtained from $(\cC,s_\cC,f_\cC)$ by a sign change at $x$}.

\begin{lemma}\label{lm:changeofsign}
The second Steenrod square
\[
\Sq^2\colon H^\ast(\cC;\Z/2\Z)\to H^{\ast+2}(\cC;\Z/2\Z)
\]
is the same whether we use  $(\cC, s_\cC, f_\cC)$ or  $(\cC, s_x, f_x)$.
\end{lemma}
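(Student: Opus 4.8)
The plan is to adapt the proof of Lemma~\ref{lm:nodeltaissue}, replacing the change of $\delta$ there by the change of sign at $x$. Recall first that over $\Z/2\Z$ the cochain complex of a $1$-flow category does not depend on the sign assignment, since mod $2$ the incidence numbers $[a:b]$ are just $|\M(a,b)|$; in particular $(\cC,s_\cC)$ and $(\cC,s_x)$ have the same cohomology. I would form the mapping cone $\cCC$ of $\cC$ (the one with $\Ob(\cCC)=\Ob(\cC)\times\{0,1\}$), identify the downward closed copy with $(\cC,s_\cC,f_\cC)$ and the upward closed copy with $\mathfrak{S}\cC$ carrying the data induced by $(s_x,f_x)$, and produce a sign assignment $s_\cCC$ and a framing $f_\cCC$ on $\cCC$ restricting to these, so that
\[
0\longrightarrow\cC\longrightarrow\cCC\longrightarrow\mathfrak{S}\cC\longrightarrow0
\]
is a short exact sequence of framed $1$-flow categories. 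Since the connecting homomorphism of this sequence is the identity of $H^k(\cC;\Z/2\Z)$, Lemma~\ref{lm:funcsqconnect} then gives that $\Sq^2$ computed from $(s_\cC,f_\cC)$ equals $\Sq^2$ computed from $(s_x,f_x)$, which is the assertion. Note that $(\cC,s_x,f_x)$ is indeed framed by Lemma~\ref{lm:framecover}.

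For the new data on $\cCC$ I would set $s_\cCC(P_a)=|a|_\cC$ for $a\neq x$ and $s_\cCC(P_x)=|x|_\cC+1$ on the connecting points $P_a\in\M_\cCC(a_1,a_0)$. The only nontrivial instance of the sign-assignment condition is for an interval $I_A\subset\M_\cCC(a_1,b_0)$ with $A\in\M_\cC(a,b)$, whose boundary points $(P_b,A)$ and $(A,P_a)$ force $s_\cCC(P_b)+s_x(A)+s_\cC(A)+s_\cCC(P_a)=1$; since $s_x(A)+s_\cC(A)$ equals $1$ exactly when $x\in\{a,b\}$ and $0$ otherwise, a one-line check confirms that $s_\cCC$ is a sign assignment. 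For the framing I would take $f_\cCC(I_A)$ to be a formula of the same shape as the one used for the framed mapping cone $\CC\cC_i$ and in the two preceding lemmas, namely essentially $s_\cC(A)\bigl(|b|_\cC+s_\cC(A)\bigr)$, corrected by a term that is nonzero precisely when $A$ is incident to $x$; the precise correction is whatever is forced by the computation below. By construction the restrictions of $s_\cCC$ and $f_\cCC$ to the two copies recover the given framed data, so what remains is the compatibility condition.

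By Remark~\ref{rm:hexagon}, verifying compatibility amounts to showing $t_I=0$ for the hexagon $H_I$ attached to each interval component $I\subset\M_\cC(a,c)$, where $\partial I=\{(B,A),(B',A')\}$ with $B\in\M_\cC(b,c)$, $A\in\M_\cC(a,b)$, and similarly for $B',A'$. The key input is that $f_\cC(I)$ and $f_x(I)$ differ by the value prescribed by (\ref{eq:framechange}) with $\delta=0$ for the sign change at $x$ on the square with corners $a,b,b',c$. There are five cases, according to whether $x$ equals one of $a,b,b',c$ or none of them; in the last case $f_\cC(I)=f_x(I)$ and $t_I=0$ is exactly the computation behind the framedness of $\CC\cC_i$, while in each of the other four cases exactly two edges of the square change sign, so (\ref{eq:framechange}) contributes an explicit term and simultaneously exactly two of the $f_\cCC(I_{(\cdot)})$ pick up their correction and $s_\cCC(P_c)$ possibly changes; these are designed to cancel, after which $t_I=0$ follows from $s_\cC(A)+s_\cC(B)+s_\cC(A')+s_\cC(B')=1$. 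I expect the main difficulty to be precisely this calibration: determining which line of (\ref{eq:framechange}) applies in each of the four corner cases and fixing the correction term in $f_\cCC(I_A)$ so that all cases close at once; the rest is bookkeeping. Once $t_I=0$ holds for all $I$, Remark~\ref{rm:hexagon} shows $\cCC$ is framed, and Lemma~\ref{lm:funcsqconnect} completes the proof.
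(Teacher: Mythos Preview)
Your outline matches the paper's proof exactly: build the mapping cone $\cCC$ with $(s_\cC,f_\cC)$ downward closed and $(s_x,f_x)$ upward closed, set $s_\cCC(P_a)$ just as you say, frame the intervals $I_A$, verify the hexagon condition by cases according to which of $a,b,b',c$ equals $x$, and invoke Lemma~\ref{lm:funcsqconnect}.

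The part you explicitly defer---the correction term in $f_\cCC(I_A)$ and the hexagon verification---is the actual content, and it is not quite as simple as a correction that is ``nonzero precisely when $A$ is incident to $x$.'' In the paper the correction depends on the quantity $\bar{s}(A)=s_\cC(A)+s(h_0(A))$ and on the parameter $\varepsilon$: explicitly $f_\cCC(I_A)$ equals $s_\cC(A)\,|a|_\cC$ when $a,b\neq x$, equals $s_\cC(A)\,|b|_\cC+\bar{s}(A)\varepsilon$ when $a=x$, and equals $s_\cC(A)\,|a|_\cC+1+\bar{s}(A)(1+\varepsilon)$ when $b=x$. Likewise, computing $f_\cC(I)+f_x(I)$ in each corner case requires tracing which pairs of lines in (\ref{eq:framechange}) are exchanged by the sign change at $x$; for example when $a=x$ one obtains $f_\cC(I)+f_x(I)=s_\cC(A)+s_\cC(A')+(\bar{s}(A)+\bar{s}(A'))\varepsilon$, with analogous but distinct formulas when $b=x$ or $c=x$. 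So your plan is sound and complete in structure, but be aware that the calibration you flag genuinely involves both the cover map $h_0$ and $\varepsilon$, not merely incidence with $x$.
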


\begin{proof}
Again we want a framed $1$-flow category $(\cCC,s_\cCC,f_\cCC)$ with $(\cC,s_\cC,f_\cC)$ as a downward closed subcategory, and $(\cC,s_x,f_x)$ as an upward closed subcategory.  This determines the sign assignment and frame assignment apart from moduli spaces $\M_\cCC(a_1,b_0)$. The remaining signs are given by setting
\[
s_\cCC(P_a) = \left\{
\begin{array}{cc}
|a|_\cC & a\not=x \\
|x|_\cC+1 & a = x
\end{array}
\right.
\]
If $A\in \M_\cC(a,b)$, the framing of $I_A\subset \M_\cCC(a_1,b_0)$ is given by
\[
f_\cCC(I_A) = \left\{
\begin{array}{cc}
s_\cC(A)|a|_\cC & a,b\not= x \\
s_\cC(A)|b|_\cC+\bar{s}(A)\varepsilon & a=x \\
s_\cC(A)|a|_\cC+1+\bar{s}(A)(1+\varepsilon) & b = x,
\end{array}
\right.
\]
where
\[
\bar{s}(A) = s_\cC(A) + s(h_0(A)).
\]
We need to check the compatibility condition, so let $I\subset\M_\cC(a,c)$ be an interval component as in Remark \ref{rm:hexagon}. That is, there exist $b,b'\in \Ob(\cC)$ with $\partial I$ the two points $(B,A)\in \M_{\cC}(b,c)\times \M_{\cC}(a,b)$ and $(B',A')\in \M_{\cC}(b',c)\times \M_{\cC}(a,b')$. Then
\begin{multline*}
t_I = s_x(B)+s_x(B')+s_\cCC(P_c) + f_\cC(I)+f_x(I) \\
+f_\cCC(I_B) +f_\cCC(I_{B'}) +f_\cCC(I_A) +f_\cCC(I_{A'}). 
\end{multline*}
If none of $a,b,b',c$ equals $x$ it is easy to see that $t_I=0$. We now need to consider the various cases where one of those equals $x$.

If $a=x$, we claim that
\begin{equation}\label{eq:a=x}
f_\cC(I) + f_x(I) = s_\cC(A)+s_\cC(A')+(\bar{s}(A)+\bar{s}(A'))\varepsilon
\end{equation}
To see this we need to consider the various cases in (\ref{eq:framechange}) which determine $f_\cC(I)$ and $f_x(I)$. 

If $f_\cC(I)$ is determined by case 1 (resp. case 3) in (\ref{eq:framechange}), then $f_x(I)$ is determined by case 3 (resp. case 1) in (\ref{eq:framechange}). Also, if $f_\cC(I)$ is determined by case 2 (resp. case 8) in (\ref{eq:framechange}), then $f_x(I)$ is determined by case 8 (resp. case 2) in (\ref{eq:framechange}). In each of these cases we have
\[
f_\cC(I)+f_x(I) = s(h_0(A))+s(h_0(A')).
\]
But we also have $\bar{s}(A)+\bar{s}(A')=0$ in these cases, so that
\[
s(h_0(A))+s(h_0(A')) = s_\cC(A)+s_\cC(A') = s_\cC(A)+s_\cC(A')+(\bar{s}(A)+\bar{s}(A'))\varepsilon.
\]
If $f_\cC(I)$ is determined by case 4 (resp. case 6) in (\ref{eq:framechange}), then $f_x(I)$ is determined by case 6 (resp. case 4) in (\ref{eq:framechange}). Also, if $f_\cC(I)$ is determined by case 5 (resp. case 7) in (\ref{eq:framechange}), then $f_x(I)$ is determined by case 7 (resp. case 5) in (\ref{eq:framechange}). In these cases we have $\bar{s}(A)+\bar{s}(A')=1$, and therefore
\begin{align*}
f_\cC(I)+f_x(I) &= s(h_0(B))+s(h_0(B'))+\varepsilon\\
&= s(h_0(B))+s(h_0(B')) + 1 + \bar{s}(A)+\bar{s}(A') + \varepsilon \\
&= s(h_0(A))+s(h_0(A')) + \bar{s}(A)+\bar{s}(A') + (\bar{s}(A)+\bar{s}(A')) \varepsilon \\
&= s_\cC(A)+s_\cC(A') + (\bar{s}(A)+\bar{s}(A')) \varepsilon.
\end{align*}
So in all cases we get (\ref{eq:a=x}).

Now $s_x(B) = s_\cC(B)$, $s_x(B') = s_\cC(B')$, and $s_\cCC(P_c) = |c|_\cC$, and therefore
\begin{multline*}
t_I = s_\cC(B)+s_\cC(B')+|c|_\cC+s_\cC(A)+s_\cC(A')+(\bar{s}(A)+\bar{s}(A'))\varepsilon \\
+f_\cCC(I_B) +f_\cCC(I_{B'}) +f_\cCC(I_A) +f_\cCC(I_{A'}). 
\end{multline*}
As 
\begin{multline*}
f_\cCC(I_B) +f_\cCC(I_{B'}) +f_\cCC(I_A) +f_\cCC(I_{A'}) = s_\cC(B)(|c|_\cC+1)+s_\cC(B')(|c|_\cC+1) \\
+s_\cC(A)(|c|_\cC+1)+\bar{s}(A)\varepsilon+s_\cC(A')(|c|_\cC+1)+\bar{s}(A')\varepsilon
\end{multline*}
we get
\begin{multline*}
t_I = 2 (s_\cC(B)+s_\cC(B')+s_\cC(A)+s_\cC(A')+(\bar{s}(A)+\bar{s}(A'))\varepsilon) \\
+|c|_\cC (1+s_\cC(B)+s_\cC(B')+s_\cC(A)+s_\cC(A')) = 0.
\end{multline*}
If $b=x$ we have
\[
f_\cC(I) + f_x(I) = s_\cC(B)+\bar{s}(A)+(\bar{s}(B)+\bar{s}(A))\varepsilon.
\]
This is similar to the proof of (\ref{eq:a=x}), using a slightly different case-by-case analysis. Also notice that $s_x(B) = s_\cC(B)+1$ and $s_x(B') = s_\cC(B)$. Then
\begin{multline*}
t_I = s_\cC(B)+1+s_\cC(B')+|c|_\cC+ s_\cC(B)+\bar{s}(A)+(\bar{s}(B)+\bar{s}(A))\varepsilon+ \\
s_\cC(A')|c|_\cC+s_\cC(B')|c|_\cC+s_\cC(B') +s_\cC(A)|c|_\cC+1+\bar{s}(A)(1+\varepsilon)+s_\cC(B)|c|+\bar{s}(B)\varepsilon.
\end{multline*}
It is straightforward (and similar to the previous case) to check that $t_I=0$.

If $c=x$ we have, by another case-by-case analysis, that
\[
f_\cC(I) + f_x(I) = 1+(\bar{s}(B)+\bar{s}(B'))(1+\varepsilon).
\]
Now $s_x(B) = s_\cC(B)+1$ and $s_x(B') = s_\cC(B)+1$, and $s_\cCC(P_c) = |c|_\cC+1$, so that
\begin{multline*}
t_I = s_\cC(B)+s_\cC(B')+|c|_\cC+1+ 1+(\bar{s}(B)+\bar{s}(B'))(1+\varepsilon) \\
+ s_\cC(A)|c|_\cC+s_\cC(A')|c|_\cC+ s_\cC(B)|c|_\cC+s_\cC(B)+1+\bar{s}(B)(1+\varepsilon) \\
+ s_\cC(B')|c|_\cC+s_\cC(B')+1+\bar{s}(B')(1+\varepsilon).
\end{multline*}
Again it is easy to check that this adds up to $0\in \Z/2\Z$, and $\cCC$ is indeed framed. The result now follows from Lemma \ref{lm:funcsqconnect}.
\end{proof}

\begin{corollary}\label{cor:sqtwosigndep}
The second Steenrod square
\[
\Sq^2_{s,\varepsilon}\colon H^\ast(\cC_F;\Z/2\Z)\to H^{\ast+2}(\cC_F;\Z/2\Z)
\]
does not depend on the sign assignment $s$ for $\cC(n)$.
\end{corollary}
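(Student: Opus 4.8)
The plan is to deduce the statement from Lemma~\ref{lm:changeofsign} together with the earlier lemma asserting that $\Sq^2_{s,f,\varepsilon}$ is independent of the frame assignment $f$ chosen for $\cC(n)$. First I would record that any two sign assignments $s,s'$ for $\cC(n)$ differ by a coboundary: since $\delta s$ and $\delta s'$ both equal the $2$-cochain taking value $1$ on every $2$-dimensional sub-cube, $s+s'\in C^1(\cn;\Z/2\Z)$ is a cocycle, and as the positive-degree cohomology of $\cn$ vanishes it is a coboundary. Writing it in terms of the coboundaries $\delta(v)$ of objects $v$, which generate $C^0(\cn;\Z/2\Z)$, we get $s+s'=\sum_{v\in S}\delta(v)$ for a finite set $S$ of objects of $\cn$. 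Because we are free to choose the frame assignments for $(\cC(n),s)$ and $(\cC(n),s')$, an induction on $|S|$ reduces everything to the case $s'=s+\delta(v)$ for a single object $v$.

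For such an $s'$ I would compare the sign assignments $s_F$ and $s'_F$ that $s$ and $s'$ induce on $\cC_F$. For $A\in\M_{\cC_F}(a,b)$ with $|a|=|b|+1$ we have $s_F(A)=s(C_{h(a),h(b)})+\sigma_{F_{h(a),h(b)}}(A)$, and similarly for $s'_F$, so $s'_F(A)=s_F(A)+\delta(v)(C_{h(a),h(b)})$; the correction term is $1$ exactly when the edge $C_{h(a),h(b)}$ has $v$ as an endpoint, that is, exactly when $h(a)=v$ or $h(b)=v$. Since $h$ is grading preserving and $|h(a)|=|h(b)|+1$, at most one of $h(a),h(b)$ can equal $v$, so $s'_F$ is obtained from $s_F$ by a sequence of sign changes, in the sense of the construction preceding Lemma~\ref{lm:changeofsign}, at the finitely many objects of $\cC_F$ lying over $v$, all of which have grading $|v|$. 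Applying Lemma~\ref{lm:changeofsign} once for each such object (each step again producing, by Lemma~\ref{lm:framecover}, a type $(0,\varepsilon)$ signed cover of $(\cC(n),s,f)$) shows that $\Sq^2$ is unchanged along the way and identifies the outcome as the framed $1$-flow category $(\cC_F,s'_F,\tilde f)$ whose frame $\tilde f$ is the one obtained from $f$ by iterating~(\ref{eq:framechange}). Choosing the frame assignment $f'$ for $(\cC(n),s')$ to be the one produced from $f$ by~(\ref{eq:framechange}) (this is a frame assignment by Lemma~\ref{lm:frame4signchange}), the signed cover associated to $(\cC(n),s',f')$ carries exactly this sign and this frame, so its $\Sq^2$ equals that of the cover associated to $(\cC(n),s,f)$; by the lemma on $f$-independence this common value is both $\Sq^2_{s,\varepsilon}$ and $\Sq^2_{s',\varepsilon}$, and the induction finishes the proof.

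The step I expect to be the main obstacle is the bookkeeping showing that the frame assignment obtained by successively applying the sign-change construction of Lemma~\ref{lm:changeofsign} agrees with the one that~(\ref{eq:framechange}) assigns directly to the cover of $(\cC(n),s',f')$. This is exactly the ``cocycle'' compatibility of formula~(\ref{eq:framechange}) under composition of changes of sign assignment, which the increasing-homological-grading convention introduced before Lemma~\ref{lm:frame4signchange} was set up to guarantee; one must also check that the result does not depend on the order in which the equal-grading sign changes over $v$ are carried out. Everything else is a direct application of the two preceding lemmas.
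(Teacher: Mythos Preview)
Your approach is essentially the paper's own argument, spelled out in more detail: the paper's proof is two sentences, reducing to $s'=s+\delta(u)$ for a single $u\in\Ob(\cn)$ and observing that the framed cover built from $s'$ is obtained from $(\cC_F,s_F,f_F)$ by performing the sign change of Lemma~\ref{lm:changeofsign} at every $x\in F_u$. You correctly isolate the one point that needs care---the framing bookkeeping that identifies the outcome of those sign changes with a signed cover of $(\cC(n),s',f')$---which the paper's terse proof leaves implicit; your order-independence concern for the sign changes over $v$ is harmless, since those objects all have the same grading and hence affect disjoint $2$-cubes.
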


\begin{proof}
Let $u\in \Ob(\cn)$ and let $s' = s+\delta(u)$. The framed $1$-flow category $(\cC, s'_F,f'_F)$ is obtained from $(\cC,s_F,f_F)$ by sign changes in all $x\in F_u$. The result therefore follows from Lemma \ref{lm:changeofsign}.
\end{proof}
This shows the first part of Proposition \ref{prop:firstprop}. The dependence on $\varepsilon$ follows from the calculations in \ref{ss:sqcalcs}.

We can compose  $F\colon \cn \to \mathcal{B}_\sigma$ with the forgetful functor $\mathfrak{F}\colon \mathcal{B}_\sigma \to \mathcal{B}$. This results in another second Steenrod square $\Sq^2\colon H^\ast(\cC_{\mathfrak{F}F};\Z/2\Z)\to H^{\ast+2}(\cC_{\mathfrak{F}F};\Z/2\Z)$. Notice that this does not depend on $\varepsilon$, as $\varepsilon$ is never used in the construction of $\cC_{\mathfrak{F}F}$. 

Furthermore, since $\Z/2\Z$-coefficients ignore the sign assignment, we can naturally identify $H^\ast(\cC_{F};\Z/2\Z) = H^\ast(\cC_{\mathfrak{F}F};\Z/2\Z)$. We therefore have three second Steenrod squares on $H^\ast(\cC_{F};\Z/2\Z)$. As we will see, all three can be different.

\section{Odd Khovanov Homology}
Motivated by a spectral sequence with $\Z/2\Z$-coefficients from Khovanov homology of a link $L$ to the Heegaard Floer homology of the branched double-cover of $L$, Ozsv\'{a}th--Rasmussen--Szab\'{o} introduced Odd Khovanov homology in \cite{MR3071132}. We recall their definition, although our presentation is following \cite{MR4078823} more closely.

\subsection{A quick recollection of odd Khovanov homology}
Given a finite set $S$, we denote by $\Lambda^\ast(S)$ the exterior algebra over the free abelian group generated by $S$. This algebra has a natural grading that we call the $q$-grading. If $\Lambda^r(S)$ denotes the subgroup generated by words in $S$ of length $r$, we declare that its elements have $q$-degree $|S|-2r$.

Consider a link diagram $D$ with $n$ crossings that we assume to be ordered. Each crossing can be resolved as a $0$-smoothing and a $1$-smoothing such that the $1$-smoothing is obtained from the $0$-smoothing by a surgery. We assume that every crossing comes with an orientation of the surgery arc, and the convention of which smoothing is which is given in Figure \ref{fig:crossing_arrow}.

\begin{figure}[ht]
\begin{tikzpicture}[scale=0.6]
\crossing{0}{0}{1}{very thick}
\arroweast{0.3}{0.5}{0.4}
\node at (1.5,0.5) {:};
\smoothingup{2}{0}{1}{very thick}
\arroweast{3.3}{0.4}{1.4}
\smoothingup{3.7}{0.6}{0.6}{thick}
\arroweast{3.9}{0.9}{0.2}
\smoothinglr{5}{0}{1}{very thick}
\end{tikzpicture}
\caption{\label{fig:crossing_arrow}A crossing with $1$-smoothing on the right.}
\end{figure}
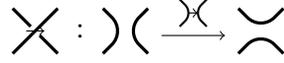 

For each $u\in \Ob(\cn)=\{0,1\}^n$ we can form the smoothing $\mathcal{S}_u$ by using the $u_i$-smoothing at the $i$-th crossing. Then $\mathcal{S}_u$ is a finite collection of circles, and we write $S_u$ for the components of $\mathcal{S}_u$.

We want to construct a functor $\mathfrak{K}_o\colon (\cn)^\op\to\Grd$ from the opposite category of $\cn$ to graded abelian groups. Since odd Khovanov homology is really cohomology, we use the opposite category of $\cn$. On objects we use $\mathfrak{K}_o(u) = \Lambda^\ast(S_u)$.

If $u\geq_1v$, consider the homomorphism $\psi_{v,u}\colon \Lambda^\ast(S_v)\to\Lambda^\ast(S_u)$ defined as follows. As $\mathcal{S}_u$ is obtained from $\mathcal{S}_v$ by a single surgery, we either have two components $s_1,s_2\in S_v$ which are merged to $s\in S_u$, or a component $s\in S_v$ splits into two components $s_1,s_2\in S_u$.

In the case of a merge we use the natural map $\Lambda^\ast(S_v)\to \Lambda^\ast(S_u)$ induced by the surjection $S_v\to S_u$ which sends both $s_1,s_2$ to $s$, and fixes all other components, for $\psi_{v,u}$. In the case of a split, we use the small arrow in Figure \ref{fig:crossing_arrow} to distinguish $s_1, s_2$. If we rotate this arrow counterclockwise by 90 degrees, it will point from one component to the other (in the case of Figure \ref{fig:crossing_arrow} it will point from the lower to the upper component in the $1$-smoothing). We call the component that is being pointed to $s_2$. We then define $\psi_{v,u}(x) = (s_1-s_2)\cdot \chi(x)$, where $\chi\colon \Lambda^\ast(S_v)\to \Lambda^\ast(S_u)$ is induced by the injection $S_v\to S_u$ sending $s$ to $s_1$ and fixing all other components.

For $u\geq_1 v_1,v_2 \geq_1 w$ with $v_1\not=v_2$ we get $\psi_{v_1,u}\circ \psi_{w,v_1} = \pm \psi_{v_2,u}\circ \psi_{w,v_2}$, depending on the local surgery picture. Figure \ref{fig:subcases} lists the possible cases. Let us write $\mathcal{S}_{u,w}$ for the local surgery picture arising this way.

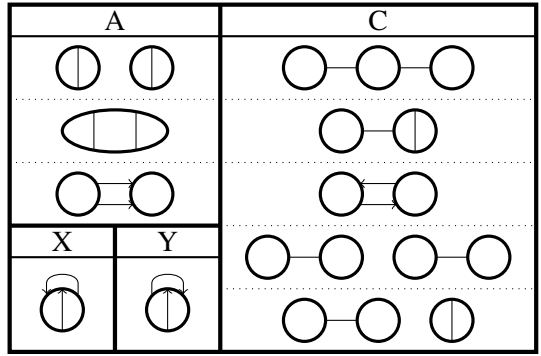
\begin{figure}[ht]
\begin{tikzpicture}[scale = 0.7]
\draw[ultra thick] (0,0.4) rectangle (10,7);
\draw[thick] (0,6.4) -- (10,6.4);
\draw[ultra thick] (4,0.4) -- (4,7);
\node at (2,6.7) {A};
\node at (1, 2.5) {X};
\node at (3, 2.5) {Y};
\node at (7, 6.7) {C};
\draw[very thick] (1.3,5.8) circle (0.4);
\draw (1.3, 5.4) -- (1.3, 6.2);
\draw[very thick] (2.7,5.8) circle (0.4);
\draw (2.7, 5.4) -- (2.7, 6.2);
\draw[very thick] (2, 4.6) ellipse (1 and 0.4);
\draw (1.6, 4.25) -- (1.6, 4.95);
\draw (2.4, 4.25) -- (2.4, 4.95);
\draw[very thick] (1.3,3.4) circle (0.4);
\draw[very thick] (2.7,3.4) circle (0.4);
\draw[->] (1.65, 3.2) -- (2.35, 3.2);
\draw[->] (1.65, 3.6) -- (2.35, 3.6);
\draw[very thick] (6.3,3.4) circle (0.4);
\draw[->] (6.65, 3.2) -- (7.35, 3.2);
\draw[<-] (6.65, 3.6) -- (7.35, 3.6);
\draw[very thick] (7.7,3.4) circle (0.4);
\draw[very thick] (6.3,4.6) circle (0.4);
\draw (6.7, 4.6) -- (7.3, 4.6);
\draw (7.7, 4.2) -- (7.7, 5);
\draw[very thick] (7.7,4.6) circle (0.4);
\draw[very thick] (7, 5.8) circle (0.4);
\draw (6,5.8) -- (6.6, 5.8);
\draw[very thick] (5.6, 5.8) circle (0.4);
\draw (7.4, 5.8) -- (8, 5.8);
\draw[very thick] (8.4, 5.8) circle (0.4);
\draw[very thick] (6.3,2.2) circle (0.4);
\draw (5.3, 2.2) -- (5.9, 2.2);
\draw[very thick] (7.7,2.2) circle (0.4);
\draw[very thick] (4.9, 2.2) circle (0.4);
\draw (8.1, 2.2) -- (8.7, 2.2);
\draw[very thick] (9.1, 2.2) circle (0.4);
\draw[very thick] (7, 1) circle (0.4);
\draw (6,1) -- (6.6, 1);
\draw[very thick] (5.6, 1) circle (0.4);
\draw[very thick] (8.4, 1) circle (0.4);
\draw (8.4, 0.6) -- (8.4, 1.4);
\draw[dotted] (0,5.2) -- (10, 5.2);
\draw[dotted] (0, 4) -- (10,4);
\draw[dotted] (4, 2.8) -- (10, 2.8);
\draw[dotted] (4, 1.6) -- (10, 1.6);
\draw[ultra thick] (0, 2.8) -- (4, 2.8);
\draw[ultra thick] (2, 2.8) -- (2, 0.4);
\draw[thick] (0, 2.2) -- (4, 2.2);
\draw[very thick] (1,1.2) circle (0.4);
\draw[->] (1.3, 1.5) -- (1.3, 1.7) to [out = 90, in = 90] (0.7,1.7) -- (0.7, 1.5);
\draw[very thick] (3,1.2) circle (0.4);
\draw[<-] (3.3, 1.5) -- (3.3, 1.7) to [out = 90, in = 90] (2.7,1.7) -- (2.7, 1.5);
\draw[->] (1, 0.8) -- (1, 1.6);
\draw[->] (3, 0.8) -- (3, 1.6);
\end{tikzpicture}
\caption{\label{fig:subcases}Commutation chart: Thick lines represent components in $S_{c}$, thin lines the surgery arcs. If a surgery arc has no orientation, then both orientations lead to the same result.}
\end{figure}

For the cases of type-A we get the two homomorphisms to be non-zero, and differ by a factor $-1$, for the cases of type-C, the two homomorphisms are equal and non-zero. In the remaining two types both homomorphisms are $0$.

\begin{definition}
A type-X {\em edge assignment} for the diagram $D$ with oriented crossings is a cochain $\varepsilon\in C^1(\cn;\Z/2\Z)$ such that for every $u\geq_2 w$ we have 
\[
\delta(\varepsilon)(C_{u,w}) = \left\{
\begin{array}{cc}
0 & \mathcal{S}_{u,w} \mbox{ is of type-C or type-Y}\\
1 & \mathcal{S}_{u,w} \mbox{ is of type-A or type-X}
\end{array}
\right.
\]
A type-Y edge assignment is defined similarly, but with the roles of X and Y reversed in the definition of $\delta(\varepsilon)$.
\end{definition}

Such edge assignments exist, see \cite[Lm.1.2]{MR3071132}. Our definition follows \cite{MR4078823} though, and to get an edge assignment as in \cite{MR3071132}, we have to add a sign assignment.

Using an edge assignment $\varepsilon$, we can now define for $u\geq_1 v$
\[
\mathfrak{K}_o(\phi_{u,v}^\op) = (-1)^{\varepsilon(C_{u,v})} \psi_{v,u},
\]
and extend this to a functor by composition.

Using a sign assignment $s$ for $\cn$, we now get a cochain complex $(C^\ast(D),\delta)$ by using
\[
C^k(D) = \bigoplus_{|u|=k}\mathfrak{K}_o(u),
\]
and $\delta = (-1)^{s(C_{u,v})}\mathfrak{K}_o(\phi^\op_{u,v})$ between the direct summands.

To make the coboundary $q$-grading preserving, and to make the chain homotopy type independent of the link diagram, we need to shift the gradings appropriately. Let $n_+$ be the number of positive crossings in the oriented link diagram $D$, and $n_-$ the number of negative crossings. We then define
\begin{equation}\label{eq:shifted}
CO^i(D) = C^{i+n_-}(D)\{i-2n_-+n_+\},
\end{equation}
where $\{j\}$ indicates a shift in $q$-grading, so that an element of $C^\ast(D)$ with $q$-degree $k$ has $q$-degree $k+j$ when viewed as an element of $C^\ast(D)\{j\}$.

By \cite[Thm.1.3]{MR3071132} the cohomology of $CO^\ast(D)$ is independent of the various choices. 

\subsection{Second Steenrod squares for odd Khovanov homology}\label{ss:squares}
For a link diagram $D$ we would like to get a framed $1$-flow category $\cC(D)$ with $C^\ast(\cC(D)) = CO^\ast(D)$. In \cite[\S 5.1]{MR4078823}, Sarkar--Scaduto--Stoffregen construct a functor $F\colon \cn \to \mathcal{B}_\sigma$ such that $\cC_F$ from Subsection \ref{sb:functorBurn} does exactly that. In order to get the second Steenrod square independent of all the choices involved, we need to recall the definition of $F$.

The value $F_u$ on objects $u\in \Ob(\cn)$ has to be a basis for the exterior algebra $\Lambda(S_u)$. To get such a basis, another choice is necessary. For each $u$ we choose an order $<$ on $S_u$. With this order, \cite{MR4078823} set
\[
F_u = \{ s_{i_1}\wedge \cdots \wedge s_{i_k} \in \Lambda(S_u)\mid k\in \{0,\ldots, |S_u|\}, s_{i_1}>\cdots>s_{i_k}\}  
\]
If $u\geq_1v$, we get for $y\in F_v$
\[
\mathfrak{K}(\phi^\op_{u,v})(y) = \sum_{x\in F_u} \varepsilon_{x,y} x
\]
for some $\varepsilon\in \{-1,0,1\}$. Then let
\[
F_{u,v} = \{(y,x)\in F_v\times F_u\mid \varepsilon_{x,y}\not=0\}.
\]
The source and target maps $s_{F_{u,v}}$, $t_{F_{u,v}}$ are the obvious projections; the sign is given by $\sigma_{F_{u,v}}(y,x) = \varepsilon_{x,y}$. Notice that $\sigma_{F_{u,v}}$ depends on the edge-assignment and on the orders on $S_u$ and $S_v$.

For $u\geq_1 v_1,v_2\geq_1 w$ there is a unique bijection between $F_{v_1,w}\circ F_{u,v_1}$ and $F_{v_2,w}\circ F_{u,v_2}$ which preserves source, target and sign, and it is shown in \cite[\S 5.1]{MR4078823} that this induces the required functor.  

Let us write $\cC_\varepsilon(D)$ for the framed 1-flow category obtained from $\cC_F$ by suspending so that $C^\ast(\cC_\varepsilon(D)) = CO^\ast(D)$. Here $\varepsilon\in \Z/2\Z$ indicates which $\varepsilon$ is used in the framing. 

By construction, $\cC_\varepsilon(D)$ is the disjoint union of framed $1$-flow categories $\cC_\varepsilon^j(D)$, which are the full subcategories generated by the objects that have $q$-degree $j$ when viewed as elements of $CO^\ast(D)$. In particular, $H^i(\cC^j_\varepsilon)$ is just the odd Khovanov homology of the link $L$ represented by $D$ in bidegree $(i,j)$. We denote this group by $\Kho{i,j}(L)$.

Using coefficients in $\Z/2\Z$ now gives rise to a second Steenrod square
\begin{equation}
\label{eq:steenrod}
\Sq^2_\varepsilon\colon \Kho{i,j}(L;\Z/2\Z) \to \Kho{i+2,j}(L;\Z/2\Z)
\end{equation}
but we need to check that it does not depend on the various choices made in the definition of $\cC_\varepsilon(D)$.

\begin{theorem}\label{thm:linkinv}
The second Steenrod square given by (\ref{eq:steenrod}) is a link invariant.
\end{theorem}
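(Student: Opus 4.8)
The plan is to show that the second Steenrod square $\Sq^2_\varepsilon$ on odd Khovanov homology is independent of the auxiliary choices in the construction of $\cC_\varepsilon(D)$ (the edge assignment, the orders on the $S_u$, the sign assignment $s$ and frame assignment $f$ for $\cC(n)$) and then that it is invariant under Reidemeister moves. The first batch of invariances is essentially already in hand: by Corollary \ref{cor:sqtwosigndep} and the preceding lemmas, $\Sq^2$ does not depend on $s$, $f$, or $\delta$. What remains at the level of a fixed diagram is independence of the edge assignment and of the orders on the circle sets. For the edge assignment: two type-X edge assignments differ by a cocycle in $C^1(\cn;\Z/2\Z)$, hence by a coboundary $\delta(g)$; changing $\varepsilon$ by $\delta(g)$ changes each $\sigma_{F_{u,v}}$ by $g$, which is precisely the kind of modification absorbed by a sign change at the objects over the affected vertices, so Lemma \ref{lm:changeofsign} (applied repeatedly, as in the proof of Corollary \ref{cor:sqtwosigndep}) gives the claim. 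For the orders on $S_u$: changing the order on a single $S_u$ induces a bijection $F_u\to F_u$ together with sign changes; one checks this amounts to a grading-preserving isomorphism of framed $1$-flow categories (or, where signs are altered, a composition of sign changes at objects in $F_u$), and $\Sq^2$ is manifestly invariant under isomorphism and invariant under sign changes by Lemma \ref{lm:changeofsign}.

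The heart of the proof is Reidemeister invariance. Here I would follow the standard pattern from \cite{MR3230817,MR4153651}: for each Reidemeister move relating diagrams $D$ and $D'$, exhibit a framed $1$-flow category over the larger cube that contains (up to the already-established equivalences) both $\cC_\varepsilon(D)$ and $\cC_\varepsilon(D')$ as full subcategories sitting in short exact sequences, with the ``extra'' piece being acyclic or a mapping cone of an isomorphism. Concretely, one compares the cube $1$-flow categories of $D$ and $D'$ by a sequence of elementary moves: (i) deleting an acyclic downward- or upward-closed subcategory, which by Lemma \ref{lm:easyfunc} does not change $H^\ast$ or $\Sq^2$ on the complementary piece; and (ii) a Gaussian-elimination-type cancellation along an isomorphism of moduli spaces, which one realizes as passing through an intermediate mapping cone $\cCC_i$ as in Lemma \ref{lm:nodeltaissue}, using Lemma \ref{lm:funcsqconnect} to transport $\Sq^2$ across the connecting homomorphism. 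The Burnside-category reformulation is well suited to this: the moves (i)--(ii) are exactly the ones used in \cite{MR4153651} to prove invariance of the even stable homotopy type, and the signed refinement of \cite{MR4078823} shows the functors $F$ for $D$ and $D'$ are related by the signed analogues of these moves. One then only needs to check that each such move, lifted to the framed $1$-flow category via Lemma \ref{lm:framecover}, is compatible with the framing data — i.e.\ that the compatibility condition survives — which is a finite local check of the same flavor as the hexagon computations in Lemmas \ref{lm:frame4signchange} and \ref{lm:changeofsign}.

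The main obstacle I anticipate is precisely this last compatibility check for the framings under the cancellation move: unlike the homology-level invariance, which is purely homological algebra, one must verify that the pre-framings produced by (\ref{eq:framechange}) on the intermediate mapping cone glue into an honest framing, i.e.\ that all the relevant hexagons $\partial\M(a,d)$ contribute $0$. For Reidemeister I and II the local models are small and the check should be routine; for Reidemeister III the cube is $3$-dimensional in the changing coordinates and the bookkeeping of signs $\sigma_{F_{u,v}}$ versus the standard sign assignment $s$, together with the $\varepsilon$-dependent terms in (\ref{eq:framechange}), becomes delicate. A secondary subtlety is that the construction of $\cC_F$ already made a (harmless) choice of extension of $s_F$ to all of $\cn$; one should confirm this choice can be made coherently across the Reidemeister move so that the subcube comparisons are literal restrictions. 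Once the framing compatibility is in place for each of the three moves, naturality of $\Sq^2$ under the inclusions and connecting maps (Lemmas \ref{lm:easyfunc} and \ref{lm:funcsqconnect}) assembles into the statement that $\Sq^2_\varepsilon$ on $\Kho{i,j}(L;\Z/2\Z)$ is well defined and independent of $D$, proving Theorem \ref{thm:linkinv}.
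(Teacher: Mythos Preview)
Your outline matches the paper's proof closely for the independence of auxiliary choices. A few choices you omit --- the ordering of the crossings, the orientation of the surgery arcs at each crossing, and the type (X versus Y) of the edge assignment --- are handled in the paper by reducing to cases you already cover: reordering crossings amounts to changing the cube's sign and frame assignment; changing orientations amounts to changing the edge assignment of the same type; and a type-X assignment becomes a type-Y assignment after reorienting. Your treatment of the edge assignment via Lemma~\ref{lm:changeofsign} is essentially the paper's argument, which phrases it as absorbing the change into the cube sign assignment $s$ and then invoking Corollary~\ref{cor:sqtwosigndep}.

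Where you diverge is in the Reidemeister argument. You anticipate having to perform Gaussian-elimination cancellations and then to verify by hand that the pre-framings from (\ref{eq:framechange}) glue into honest framings on intermediate mapping cones; you flag this as the main obstacle. In fact this obstacle does not arise. The paper simply follows \cite[\S 6]{MR3230817}: for each Reidemeister move one already has the framed $1$-flow category $\cC_\varepsilon(D')$ of the diagram with extra crossings, and the categories to be compared appear as downward- or upward-closed full subcategories of it, with acyclic complements. All framings are inherited by restriction from $\cC_\varepsilon(D')$, so no new hexagon computations are required; Lemmas~\ref{lm:easyfunc} and~\ref{lm:funcsqconnect} (standing in for \cite[Lm.~3.32]{MR3230817}) do all the work. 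Your secondary worry about coherently extending $s_F$ to all of $\cn$ is likewise unfounded: formula (\ref{eq:framechange}) depends only on the four edge-signs of the local $2$-cube, not on any global extension, so there is nothing to reconcile across the move.
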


\begin{proof}
The proof is very similar to the proof of \cite[Thm.1.7]{MR4078823}, so we will not give too much detail. We note that by Subsection \ref{sb:functorBurn} the second Steenrod square does not depend on the sign-assignment nor the frame-assignment of the underlying cube $1$-flow category $\cC(n)$, nor on the $\delta\in \Z/2\Z$ used in the frame assignment.

Let us list the choices made coming from the odd Khovanov complex.
\begin{enumerate}
\item The ordering of the crossings in $D$.
\item The edge assignment.
\item The orientation at crossings.
\item The type of edge assignment.
\item The order of circles at each crossing.
\item The diagram $D$ for $L$.
\end{enumerate}
To see that it does not depend on the ordering of the crossings of the diagram, note that a change in the order is compensated by a change in sign- and frame-assignment of the cube $1$-flow category, and we have already seen that this does not affect the Steenrod square. 

If $\varepsilon$ and $\varepsilon'$ are edge assignments of the same type, then $\varepsilon+\varepsilon'\in C^1(\cn; \Z/2\Z)$ is a cocycle, hence a coboundary $\delta(x)$. But notice that $\delta(x)+s$ is another sign assignment for the cube, and the difference between edge assignments is compensated by changing the sign assignment for the cube. By Corollary \ref{cor:sqtwosigndep} the Steenrod square does not depend on the edge assignment.

By \cite[Lm.2.3]{MR3071132} the difference in a choice of orientations of crossings is compensated by a change of edge assignment of the same type. By the previous point changing orientations does not change the Steenrod square.

It follows from the proof of \cite[Lm.2.4]{MR3071132} that an edge assignment of type X is an edge assignment of type Y after a change of orientations. By the two previous points the Steenrod square does not depend on the type of edge assignment.

Changing the order of circles at a crossing changes the sign of signed correspondences in the functor $F\colon \cn\to \mathcal{B}_\sigma$. The effect on the framed $1$-flow category is described by sign changes at some of the elements of $F_u$. By repeated use of Lemma \ref{lm:changeofsign} the Steenrod square does not depend on this.

It remains to show that the Steenrod square does not depend on the diagram for the link. For this we need to consider the Reidemeister moves. The proof here is practically the same as the proof in \cite[\S 6]{MR3230817}. Instead of \cite[Lm.3.32]{MR3230817} we use Lemma \ref{lm:easyfunc} or Lemma \ref{lm:funcsqconnect}.
\end{proof}

Recall the forgetful functor $\mathfrak{F}\colon \mathcal{B}_\sigma\to\mathcal{B}$. Then $\mathfrak{F}\circ F\colon \cn\to\mathcal{B}$ recovers by \cite[Prop.5.3]{MR4078823} the functor used in \cite{MR4153651} to obtain the stable Khovanov homotopy type of \cite{MR3230817}. The corresponding (and suitably suspended) framed $1$-flow category $\cC_{\mathfrak{F}F}$ recovers the second Steenrod square of this homotopy type by \cite {MR4165986}. Let us denote this operation by
\[
\Sq^2\colon \Kh^{i,j}(L;\Z/2\Z) \to \Kh^{i+2,j}(L;\Z/2\Z).
\]
Note that $\Kh^{i,j}(L;\Z/2\Z) \cong \Kho{i,j}(L;\Z/2\Z)$, and we will drop the `o' from the notation in what follows.

\subsection{Reduced odd Khovanov homology} By picking a basepoint $p$ on a link diagram away from the crossings, a reduced version for odd Khovanov homology can be defined. Interestingly, the homology is independent of the basepoint, but for our Steenrod squares we only get this for knots.

Let $\cC_\varepsilon(D)$ be the framed $1$-flow category from the previous subsection. For every smoothing $\mathcal{S}_u$ let $s^p_u$ be the component containing the basepoint. Then let
\[
F_u^p = \{ w \in F_u\mid s^p_u\wedge w = 0 \}.
\]
These are the objects of $\cC_\varepsilon(D)$ which already contain $s^p_u$ in their word. We let $\widetilde{\cC}_\varepsilon(D)$ be the full subcategory spanned by the $F_u^p$. It is easy to see that this subcategory is upward closed. We also write $\widetilde{\cC}^{j+1}_\varepsilon(D) = \widetilde{\cC}_\varepsilon(D) \cap \cC^j_\varepsilon(D)$, and thus get a short exact sequence of framed $1$-flow categories
\[
0\longrightarrow \overline{\cC}^{j-1}_\varepsilon(D)\longrightarrow \cC^j_\varepsilon(D) \longrightarrow \widetilde{\cC}^{j+1}_\varepsilon(D) \longrightarrow 0
\]
where the complementary subcategory $\overline{\cC}^{j-1}_\varepsilon(D)$ is generated by those $w\in F_u$ with $s^p_u \wedge w \not=0$ which have $q$-degree $j$.

If we order each $S_u$ such that $s^p_u$ is the maximal element, we can identify the framed $1$-flow categories $\overline{\cC}^{j-1}_\varepsilon(D)$ and $\widetilde{\cC}^{j-1}(D)$ using multiplication by $s^p_u$ on objects.

\begin{proposition}
The second Steenrod square
\[
\widetilde{\Sq}^2_\varepsilon\colon \widetilde{\Kh}^{i,j}(L;\Z/2\Z)\to \widetilde{\Kh}^{i+2,j}(L;\Z/2\Z)
\]
arising from $\widetilde{\cC}^j_\varepsilon(D)$ does not depend on the various choices and where on a component the basepoint is placed.
\end{proposition}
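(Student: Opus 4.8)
The plan is to deduce independence of the choices coming from the odd Khovanov complex exactly as in the proof of Theorem~\ref{thm:linkinv}, and to treat independence of the basepoint along a component as the only genuinely new point. The key observation for the first part is that $\widetilde{\cC}^j_\varepsilon(D)$ is a \emph{full, upward closed} subcategory of $\cC^j_\varepsilon(D)$, so every independence result already established for $\cC^j_\varepsilon(D)$ transfers verbatim. In detail: re-running the mapping-cone constructions of Lemma~\ref{lm:nodeltaissue}, Lemma~\ref{lm:changeofsign} and Corollary~\ref{cor:sqtwosigndep} on the full subcategory spanned by the objects $F^p_u$ shows that $\widetilde{\Sq}^2_\varepsilon$ is unchanged under a change of sign assignment, frame assignment, or the parameter $\delta$ for the underlying cube $1$-flow category. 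As in Theorem~\ref{thm:linkinv}, a change of crossing order, a change of edge assignment of a fixed type, a change of orientation at a crossing, and a change of the type of edge assignment are each absorbed into one of these, and hence do not affect $\widetilde{\Sq}^2_\varepsilon$. A change in the order of the circles at a crossing produces sign changes at certain $x\in F_u$; when $x\in F^p_u$ this is a sign change inside $\widetilde{\cC}^j_\varepsilon(D)$ and Lemma~\ref{lm:changeofsign} applies directly, while when $x\notin F^p_u$ the subcategory $\widetilde{\cC}^j_\varepsilon(D)$ is untouched. Finally, independence of the diagram is the Reidemeister argument of \cite[\S 6]{MR3230817}: after an ambient isotopy the basepoint may be placed outside the region where the move takes place, so the basepoint component is carried along canonically, and one invokes Lemma~\ref{lm:easyfunc} and Lemma~\ref{lm:funcsqconnect} in place of \cite[Lm.3.32]{MR3230817}.

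It then remains to fix $D$ and show $\widetilde{\Sq}^2_\varepsilon$ is the same for two basepoints $p,p'$ on the same component. Sliding $p$ within a single arc of $D$ changes none of the data defining $\widetilde{\cC}^j_\varepsilon(D)$, so we may assume $p$ and $p'$ lie on the two arcs $\alpha,\beta$ of $D$ meeting at one crossing $c$ of that component. For each $u\in\Ob(\cn)$, exactly one of the two local resolutions at $c$ joins $\alpha$ and $\beta$ into a single circle of $\mathcal{S}_u$, so that $s^p_u=s^{p'}_u$ and $F^p_u=F^{p'}_u$ for those $u$; for the remaining $u$ the arcs $\alpha,\beta$ may lie on distinct circles $\sigma_1,\sigma_2$ of $\mathcal{S}_u$ that get merged by the surgery at $c$, and this is where $F^p_u$ and $F^{p'}_u$ genuinely differ.

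Write $\widetilde{\cC}^j_\varepsilon(D)_p$ and $\widetilde{\cC}^j_\varepsilon(D)_{p'}$ for the two reduced framed $1$-flow categories. Mimicking the mapping-cone constructions of Section~\ref{sb:functorBurn} (and of $\CC\cC_i$), the plan is to build directly an auxiliary $1$-flow category $\mathfrak{D}$ with object set $\Ob(\widetilde{\cC}^j_\varepsilon(D)_p)\sqcup\Ob(\widetilde{\cC}^j_\varepsilon(D)_{p'})$, the second summand shifted up by one in grading, whose new moduli spaces from the shifted $p'$-part down to the $p$-part encode the standard chain homotopy equivalence that transports a marked point across the crossing $c$. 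One then prescribes signs on the new $0$-dimensional moduli spaces and framing values on the new intervals so that $\mathfrak{D}$ becomes a framed $1$-flow category fitting into a short exact sequence of framed $1$-flow categories
\[
0\longrightarrow \widetilde{\cC}^j_\varepsilon(D)_p \longrightarrow \mathfrak{D} \longrightarrow \mathfrak{S}\,\widetilde{\cC}^j_\varepsilon(D)_{p'}\longrightarrow 0 .
\]
Its connecting homomorphism is precisely the basepoint-moving map, which is a cohomology isomorphism, so Lemma~\ref{lm:funcsqconnect} forces $\widetilde{\Sq}^2_\varepsilon$ for $p$ and for $p'$ to agree under this identification.

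The main obstacle is the construction of the framing of $\mathfrak{D}$ and the verification of the compatibility condition. As in Lemma~\ref{lm:nodeltaissue}, Lemma~\ref{lm:changeofsign} and the framing of $\CC\cC_i$, this reduces to checking that each of the finitely many hexagonal components of the relevant $3$-dimensional boundaries contributes $0\in\Z/2\Z$; the bookkeeping is heavier here because the homotopy is genuinely not the identity, so the new intervals carry framing values dictated by the local surgery picture at $c$, and one must track how these interact with the signs $\sigma_{F_{u,v}}$ coming from the edge assignment. I expect this to be a lengthy but routine sign computation, entirely parallel to those in Section~\ref{sb:functorBurn}; I would organise it by first recording, for each local type appearing in Figure~\ref{fig:subcases}, how $s^p_u$ and $s^{p'}_u$ change under the surgeries, and only then assembling the hexagon sums.
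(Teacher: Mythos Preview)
Your treatment of the ``various choices'' part is essentially the paper's: the reduced category $\widetilde{\cC}^j_\varepsilon(D)$ is an upward closed full subcategory of $\cC^j_\varepsilon(D)$, so the independence arguments of Theorem~\ref{thm:linkinv} restrict.

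For the basepoint, however, the paper takes a much simpler route than your direct mapping-cone construction. One regards the diagram as lying on $S^2$ rather than $\R^2$; then any two points on the same link component are related by sliding along arcs and past crossings, and such a slide can be realised by a sequence of Reidemeister moves performed \emph{away from the basepoint}, by allowing moves that pass over $\infty$. Thus basepoint independence on a component is subsumed by the Reidemeister invariance you already established, via Lemma~\ref{lm:easyfunc} and Lemma~\ref{lm:funcsqconnect}. This is the standard trick from \cite[\S 8]{MR3230817} and \cite[\S 5.4]{MR4078823}, and it avoids entirely the need to build and frame your auxiliary category $\mathfrak{D}$.

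Your proposed approach is not obviously wrong, but it is left as a plan: you do not exhibit the ``basepoint-moving'' chain map at the $1$-flow category level, nor the framing of $\mathfrak{D}$, nor verify the hexagon sums. Unlike the mapping cones in Lemmas~\ref{lm:nodeltaissue} and \ref{lm:changeofsign}, where the two subcategories share the same underlying object set and the new moduli spaces are trivial ($\{P_a\}$ and copies of existing intervals), here $\widetilde{\cC}^j_\varepsilon(D)_p$ and $\widetilde{\cC}^j_\varepsilon(D)_{p'}$ have genuinely different object sets over those $u$ where $\sigma_1\neq\sigma_2$, so the new $0$- and $1$-dimensional moduli spaces must be constructed from scratch and their signs and framings determined. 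Calling this ``routine'' is optimistic; the $S^2$ trick sidesteps it completely.
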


\begin{proof}
This is completely analogous to the proof of Theorem \ref{thm:linkinv}. We note that the basepoint can move on a component by using Reidemeister moves on the link diagram in $S^2$ away from the basepoint by allowing moves going over $\infty$, compare \cite[\S 8]{MR3230817} and \cite[\S 5.4]{MR4078823}.
\end{proof}

The long exact sequence in odd Khovanov homology is known to split \cite{MR3071132}, but we will see that the second Steenrod square does not split.

\subsection{Concordance Invariants} Odd Khovanov homology does not appear to have an analogue of the Lee spectral sequence or the Rasmussen invariant. However, as in \cite[\S 5.6]{MR4078823} we can define a refinement of the $\Z/2\Z$ $s$-invariant which takes the second Steenrod square into account.

Let $D$ be a knot diagram and $\F$ a field. The {\em Bar-Natan complex} $\CBN^\ast(D;\F)$ is a deformation of the Khovanov complex $\CKh^{\ast,\ast}(D;\F)=C^\ast(\cC_{\mathfrak{F}F};\F)$ which admits a descending filtration $\mathcal{F}_j$ with $\CKh^{\ast,j}(D;\F) = \mathcal{F}_j/\mathcal{F}_{j+2}$, compare \cite{MR3189434}. The corresponding spectral sequence has exactly two non-trivial terms $E^{0,s\pm 1}_\infty$ for some $s\in 2\Z$, and this $s$ is the {\em Rasmussen $s$-invariant} $s_\F(K)$ of the knot $K$ with coefficients in the field $\F$, see \cite{MR2729272} and \cite{MR3189434}.

In \cite{MR3189434} Lipshitz--Sarkar describe a general technique to refine the Rasmussen-invariants using cohomology operations, and we recall their definition for any of the second Steenrod squares $\alpha\colon \Kh^{i,j}(K;\F_2) \to \Kh^{i+2,j}(K;\F_2)$, where $\F_2 = \Z/2\Z$ is the field with two elements. 

Denote $p\colon H^0(\mathcal{F}_j;\F_2) \to \Kh^{0,j}(K;\F_2) \cong H^0(\mathcal{F}_j/\mathcal{F}_{j+2};\F_2)$, and consider the following configurations
\begin{equation}\label{eq:fullconfigs}
\begin{tikzpicture}[baseline={([yshift=-.5ex]current bounding box.center)}]
\node at (0,0) {$\langle \tilde{a},\tilde{b}\rangle$};
\node at (3,0) {$\langle \hat{a}, \hat{b} \rangle$};
\node at (6,0) {$\langle a,b\rangle$};
\node at (9,0) {$\langle \bar{a},\bar{b}\rangle$};
\node at (0,1) {$\Kh^{-2,j}(K;\F_2)$};
\node at (3,1) {$\Kh^{0,j}(K;\F_2)$};
\node at (6,1) {$H^0(\mathcal{F}_j;\F_2)$};
\node at (9,1) {$\BN^0(K;\F_2)$};
\node at (0,2) {$\langle \tilde{a}\rangle$};
\node at (3,2) {$\langle \hat{a}\rangle$};
\node at (6,2) {$\langle a\rangle$};
\node at (9,2) {$\langle \bar{a}\rangle\not=0$};
\draw[right hook->] (0,0.25) -- (0,0.75);
\draw[right hook->] (3,0.25) -- (3,0.75);
\draw[right hook->] (6,0.25) -- (6,0.75);
\draw[-] (9.05,0.25) -- (9.05,0.75);
\draw[-] (8.95,0.25) -- (8.95,0.75);
\draw[right hook->] (0,1.75) -- (0,1.25);
\draw[right hook->] (3,1.75) -- (3,1.25);
\draw[right hook->] (6,1.75) -- (6,1.25);
\draw[right hook->] (9,1.75) -- (9,1.25);
\draw[->] (0.5,0) -- (2.5,0);
\draw[<-] (3.5,0) -- (5.5,0);
\draw[->] (6.5,0) -- (8.5,0);
\draw[->] (0.4,2) -- (2.6,2);
\draw[<-] (3.4,2) -- (5.6,2);
\draw[->] (6.4,2) -- (8.3,2);
\draw[->] (1.1,1) -- node[above] {$\alpha$} (2,1);
\draw[<-] (4,1) -- node[above] {$p$} (5.15,1);
\draw[->] (6.9,1) -- node[above] {$i^\ast$} (8.05,1);
\end{tikzpicture}
\end{equation}

\begin{definition}
Call an odd integer $j$ \em $\alpha$-half-full\em, if there exist $a\in H^0(\mathcal{F}_j;\F_2)$ and $\tilde{a}\in Kh^{-2,j}(K;\F_2)$ such that $p(a)=\alpha(\tilde{a})$, and such that $i^\ast(a)=\bar{a}\not=0$. That is, there exists a configuration as in the upper two rows of (\ref{eq:fullconfigs}).

Call an odd integer $j$ \em $\alpha$-full\em, if there exist $a,b\in H^0(\mathcal{F}_j;\F_2)$ and $\tilde{a},\tilde{b}\in Kh^{-2,q}(K;\F_2)$ such that $p(a)=\alpha(\tilde{a})$, $p(b)=\alpha(\tilde{b})$, and $i^\ast(a),i^\ast(b)$ generate $\BN^0(K;\F_2)$. That is, there exists a configuration as in the lower two rows of (\ref{eq:fullconfigs}).
\end{definition}

\begin{definition}
Let $K$ be a knot and $\alpha\colon \Kh^{i,j}(K;\F_2)\to \Kh^{i+2,j}(K;\F_2)$ be one of the second Steenrod squares $\Sq^2_0$, $\Sq^2_1$, or $\Sq^2$ described in Subsection \ref{ss:squares}. Then $r^\alpha_+,r^\alpha_-, s^\alpha_+, s^\alpha_-\in \Z$ are defined as follows.
\begin{align*}
r^\alpha_+(K) &= \max\{j\in 2\Z+1\,|\, j \mbox{ is $\alpha$-half-full}\}+1\\
s^\alpha_+(K) &= \max\{j\in 2\Z+1\,|\, j \mbox{ is $\alpha$-full}\}+3.
\end{align*}
If $\overline{K}$ denotes the mirror of $K$, we also set
\begin{align*}
r^\alpha_-(K) &= -r^\alpha_+(\overline{K})\\
s^\alpha_-(K) &= -s^\alpha_+(\overline{K}).
\end{align*}
We also write
\[
 s^\alpha(K) = (r^\alpha_+(K),s^\alpha_+(K),r^\alpha_-(K),s^\alpha_-(K)).
\]
\end{definition}

The proof in \cite[Lm.4.2]{MR3189434} carries over to show that 
\[
r^\alpha_+(K), s^\alpha_+(K)\in \{s_{\F_2}(K), s_{\F_2}(K)+2\}.
\]
 For $\alpha = \Sq^2$ it is shown in \cite[Thm.1]{MR3189434} that $|r^\alpha_\pm(K)|/2$ and $|s^\alpha_\pm(K)|/2$ are concordance invariants and lower bounds for the smooth slice genus $g_4(K)$. The proof carries over to $\alpha = \Sq^2_\varepsilon$ for $\varepsilon \in \Z/2\Z$.

\begin{theorem}
Let $\varepsilon\in \Z/2\Z$. Then $|r^{\Sq^2_\varepsilon}_\pm(K)|/2$ and $|s^{\Sq^2_\varepsilon}_\pm(K)|/2$ are concordance invariants and lower bounds for the smooth slice genus $g_4(K)$.
\end{theorem}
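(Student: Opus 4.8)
The plan is to mimic the argument of Lipshitz--Sarkar in \cite[Thm.1]{MR3189434} verbatim, relying on the fact that the only ingredients used there are (i) the naturality of the cohomology operation $\alpha$ under the maps induced by cobordisms, and (ii) its naturality under the long exact sequences coming from short exact sequences of complexes/flow categories. Both are available to us: the operations $\Sq^2_\varepsilon$ commute with $p^\ast$, $i^\ast$ and the connecting homomorphism $\delta^\ast$ by Lemma \ref{lm:easyfunc} and Lemma \ref{lm:funcsqconnect}, and they are link invariants by Theorem \ref{thm:linkinv}. So the proof reduces to checking that $\Sq^2_\varepsilon$ behaves functorially under concordances well enough to run the Lipshitz--Sarkar scheme.

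First I would recall the structure of the Lipshitz--Sarkar argument. A concordance from $K_0$ to $K_1$ is built from elementary cobordisms (births, deaths, saddles) and Reidemeister moves connecting link diagrams. Each elementary piece induces a filtered chain map on the Bar-Natan complexes $\CBN^\ast(\cdot;\F_2)$, and correspondingly a map of the relevant framed $1$-flow categories fitting into short exact sequences of the type treated in Section 2. One shows that for an $\alpha$-half-full (resp.\ $\alpha$-full) integer $j$ for $K_0$, the induced maps carry the witnessing configuration \eqref{eq:fullconfigs} to a configuration witnessing that a nearby integer is $\alpha$-half-full (resp.\ $\alpha$-full) for $K_1$; combined with the symmetric argument for the reverse concordance and the mirror, this pins down $r^\alpha_\pm$ and $s^\alpha_\pm$ up to the concordance. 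The key algebraic point is exactly the commutativity of $\alpha$ with $p$, $i^\ast$ and the cobordism-induced maps, which in \cite{MR3189434} is \cite[Lm.4.1, Lm.4.2]{MR3189434}; in our setting these follow from Lemma \ref{lm:easyfunc}, Lemma \ref{lm:funcsqconnect}, and the diagrammatic invariance established in the proof of Theorem \ref{thm:linkinv}. The slice-genus bound then comes, as in \cite[Thm.1]{MR3189434}, from applying the concordance invariance to a slice surface for $K$ pushed into the four-ball, realized as a sequence of elementary cobordisms, and bounding the number of critical points.

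The step I expect to be the main obstacle is verifying that a saddle cobordism induces an appropriate map on our framed $1$-flow categories compatibly with the $\Sq^2_\varepsilon$-operation. In the even case of \cite{MR3189434} this is handled through the Bar-Natan deformation and the Khovanov--Burnside functor machinery of \cite{MR4153651}; here one must check that the signed Burnside functor $F\colon \cn\to\mathcal{B}_\sigma$ of Sarkar--Scaduto--Stoffregen is functorial enough under the merge/split maps appearing in a saddle, and that the induced inclusions of subcategories are downward/upward closed so that Lemma \ref{lm:easyfunc} and Lemma \ref{lm:funcsqconnect} apply. Since a saddle on an odd Khovanov complex is a sum of merge and split maps of the type already analyzed in Section 4, and the $1$-flow category of a crossingless tangle pair differs from the relevant subcategories only by the standard suspension and mapping-cone constructions of Section 2, this should go through without new ideas; but it is the place where the $\F_2$-identification $\Kho{i,j}\cong \Kh^{i,j}$ and the invariance results must be combined carefully, and it is where most of the routine checking lies. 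The remaining steps---deducing concordance invariance of $|r^{\Sq^2_\varepsilon}_\pm|/2$ and $|s^{\Sq^2_\varepsilon}_\pm|/2$ and the bound $|s^{\Sq^2_\varepsilon}_\pm(K)|/2 \le g_4(K)$---are then formal consequences, identical to \cite[Thm.1]{MR3189434}.
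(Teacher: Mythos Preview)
Your proposal is correct and follows essentially the same approach as the paper. The paper's proof is in fact a terse version of exactly what you outline: it declares the argument completely analogous to \cite[Thm.1]{MR3189434}, identifies naturality of $\Sq^2_\varepsilon$ under cobordism-induced maps as the only missing ingredient, and then observes that decomposing the cobordism into Reidemeister and Morse moves lets one realize each induced map via short exact sequences of $1$-flow categories, so that Lemma~\ref{lm:easyfunc} and Lemma~\ref{lm:funcsqconnect} finish the job.
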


\begin{proof}
The proof is completely analogous to the proof of \cite[Thm.1]{MR3189434}, the only missing ingredient is that $\Sq^2_\varepsilon$ commutes with maps induced by link-cobordism. But this also follows as in \cite{MR3189434}. One writes the cobordism as a composition of Reidemeister and Morse moves, then observes that the induced maps on Khovanov homology can be realized by short exact sequences of $1$-flow categories, and the result follows from Lemma \ref{lm:easyfunc} and \ref{lm:funcsqconnect}.
\end{proof}

\section{Calculations}
The author's computer programme \verb+KnotJob+ has calculations of Steenrod squares implemented, and is available from the author's website. At the moment knots with 13 crossings or more can only be done with patience. It may be possible to deal directly with rational tangles as in \cite{MR4015262} to improve calculation times, but we have not further investigated this yet.

In order to get a non-trivial second Steenrod square, the Khovanov homology needs to support non-trivial groups in degree $i$ and $i+2$ for some $q$-degree. Among knots in the Rolfsen table this is satisfied by only a few knots. Furthermore, for such knots one can identify the stable homotopy type from knowledge of the second Steenrod square and its interaction with Bockstein homomorphisms, compare \cite[\S 4]{MR3252965}. We will refer to these spaces, even if we only have conjecturally an identification of the stable homotopy type in the odd case.

\subsection{Steenrod Squares}\label{ss:sqcalcs}
In Table \ref{fig:calcs} we list any non-trivial second Steenrod squares among the prime knots with up to $10$ crossings. The information is given in the form of words for Chang spaces, see \cite{MR1361886}. An $\eta$ means that there is a non-trivial second Steenrod square $\Sq^2(x)$, a subscript $2$ means that the first Steenrod square $\Sq^1(x)$ is also non-zero. Finally, $\eta 2$ means that $0\not=\Sq^2(x) = \Sq^1(y)$ for some $x,y$.

Note that we consider all potential non-trivial second Steenrod squares, that is, $q$-degrees where the width of the cohomology is $3$. An empty entry means that the second Steenrod square is $0$. One can detect several patterns, although we have not investigated this further for more crossings. We notice that $\Sq^2_0$ only uses the Chang words $\eta2$ and $_2\eta2$, while $\Sq^2_1$ only uses $_2\eta$ and $_2\eta2$. Also, we have $\Sq^2$ non-trivial if and only if $\Sq^2_0$ or $\Sq^2_1$ is non-trivial. However, we do not expect this behaviour to be universal.

\begin{table}[ht]
\begin{tabular}{|c||c||c|c|c||c||c||c|c|c|}
\hline
Knot & $q$ & $\Sq^2$ & $\Sq^2_0$ & $\Sq^2_1$ & Knot & $q$ & $\Sq^2$ & $\Sq^2_0$ & $\Sq^2_1$\\
\hline
\hline
$8_{19}$ &  $11$ & $_2\eta$ & $\eta2$ &   &
$10_{152}$ & $13$ & $_2\eta$ & $\eta2$ & \\
\cline{1-5}
$9_{42}$ & $1$ & $\eta2$ & & $_2\eta$ &
& $15$ & & & \\
\cline{1-5}
$10_{124}$ & $13$ & $_2\eta$ & $\eta2$ & & 
& $19$ & $\eta2$ & & $_2\eta$ \\
\cline{6-10}
 & $19$ & $\eta2$ & & $_2\eta$ &
 $10_{153}$ & $-5$ & $_2\eta$ & $\eta2$ & \\
\cline{1-5}
$10_{128}$ & $11$ & $_2\eta$ & $\eta2$ & &
& $-3$ & $\eta2$ & $\eta2$ & $_2\eta2$  \\
\cline{1-5}
$10_{132}$ & $-9$ & $_2\eta$ & $\eta2$ & &
& $-1$ & & & \\
& $-7$ & $\eta2$ & $\eta2$ & $_2\eta2$ &
& $1$ & $_2\eta$ & $_2\eta2$ & $_2\eta$ \\
& $-3$ & $_2\eta$ & $\eta2$ & &
& $3$ & $\eta2$ & & $_2\eta$ \\
\hline
$10_{136}$ & $1$ & $\eta2$ & & $_2\eta$ &
$10_{154}$ & $11$ & $_2\eta$ & $\eta2$ & \\
\cline{1-5}
$10_{139}$ & $13$ & $_2\eta$ & $\eta2$ & &
& $13$ & & & \\
& $15$ & & & &
& $17$ & $\eta2$ & & $_2\eta$ \\
\cline{6-10}
& $19$ & $\eta2$ & & $_2\eta$ &
$10_{161}$ & $11$ & $_2\eta$  & $\eta_2$ & \\
\cline{1-5}
$10_{145}$ & $-15$ & $_2\eta$ & $\eta2$ & &
& $13$ & & & \\
& $-13$ & $\eta2$ & $\eta2$ & $_2\eta2$ &
& $17$ & $\eta2$ & & $_2\eta$ \\
\cline{6-10}
& $-11$ & & & &
\multicolumn{5}{c|}{ } \\
& $-9$ & $_2\eta2$ & & $_2\eta$ & 
\multicolumn{5}{c|}{ } \\
\hline
\end{tabular}
\vspace{0.2cm}
\caption{\label{fig:calcs}The second Steenrod squares for knots in the Rolfsen table.}
\end{table}

Indeed, for the knot $13^n_{3663}$ we get an $\eta$ word in $q$-degree $-5$ for $\Sq^2_0$. We note that in $q$-degree $1$ we get a Baues--Hennes word $_2\eta^2\xi$ for $\Sq^2_0$. Here $\xi$ stands for another non-trivial $\Sq^2_0$, but in a higher homological degree. At the time of writing we do not know a knot which admits an $\eta$ word for $\Sq^2$, compare \cite[Qn.5.2]{MR3252965}. 

The short exact sequence
\[
0\longrightarrow \widetilde{\Kh}^{i,j+1}(L;\Z/2\Z)\longrightarrow \Kh^{i,j}(L;\Z/2\Z)\stackrel{p}{\longrightarrow} \widetilde{\Kh}^{i,j-1}(L;\Z/2\Z)\longrightarrow 0
\]
is known to split, but this splitting does not extend to the second Steenrod squares. Indeed, all $\widetilde{\Sq}^2_\varepsilon$ are zero for the Rolfsen table by degree reasons. At the moment \verb+KnotJob+ does not support computations of reduced Steenrod squares, but we can derive a non-triviality result from the short exact sequence for the knot $K=13^n_{3663}$.

In $q$-degree $3$ we get a word $_2\eta2$ for $\Sq^2_0$ starting in homological degree $0$. Calculations show that
\[
\Kh^{i,3}(K;\Z/2\Z) = \widetilde{\Kh}^{i,2}(K;\Z/2\Z) \cong \Z/2\Z
\]
for $i = 0,1$. Also, if $x\in \Kh^{0,3}(K;\Z/2\Z)$ is the element with $\Sq^2_0(x)$ non-zero, we have $\Sq^2_0(x) = \Sq^1(y)$ for $y\in \Kh^{1,3}(K;\Z/2\Z) = \widetilde{\Kh}^{1,2}(K;\Z/2\Z)$. Integral calculations show that $\widetilde{\Sq}^1(y) \not= 0$ in $\widetilde{\Kh}^{2,2}(K;\Z/2\Z)$, so 
\[
\widetilde{\Sq}^2_0(x) = p\circ \Sq^2_0(x) = p\circ \Sq^1(y) = \widetilde{\Sq^1}(y)\not=0
\]
by naturality.

To see that the second Steenrod square $\widetilde{\Sq}^2_\varepsilon$ can depend on the component of the basepoint, consider the split union $L = K \sqcup O$, where $K$ is any of the knots in Table \ref{fig:calcs}. If we place the basepoint on the unknot component, $\widetilde{\Sq}^2_\varepsilon$ agrees with $\Sq^2_\varepsilon$ for $K$. If we place the basepoint on $K$, then $\widetilde{Sq}^2_\varepsilon$ is trivial.

\subsection{Split unions}
Given two links $L_1$, $L_2$, denote the split union by $L_1\sqcup L_2$. The odd Khovanov homology of the split union behaves slightly different from the even version, compare \cite{MR3475078}, but with $\Z/2\Z$-coefficients we can still express it via a tensor product of the homologies of $L_1$ and $L_2$. 

For the stable homotopy type of \cite{MR3230817} one gets indeed a nice formula for the split union in terms of the smash product of the stable homotopy types for $L_1$ and $L_2$, \cite[Thm.1]{MR4153651}. In particular, this was used to show that the stable homotopy type of $T(2,3)\sqcup T(2,3)$ has a non-trivial second Steenrod square, compare the proof of \cite[Cor.1.4]{MR4153651}.

For the odd Khovanov homology of $T(2,3)\sqcup T(2,3)$ calculations show that we have
\[
\Kho{i,j}(T(2,3)\sqcup T(2,3);\Z) \cong \bigoplus_{\stackrel{j_1+j_2=j}{i_1+i_2=i}}\Kho{i_1,j_1}(T(2,3);\Z)\otimes \Kho{i_2,j_2}(T(2,3);\Z),
\]
with the lack of torsion meaning that we do not get any Tor-terms.

Any stable homotopy type for odd Khovanov homology for $T(2,3)$ has to be a wedge of spheres for homological width reasons, and in particular the smash product of two such copies is a wedge of spheres. The second Steenrod square in this smash product is therefore trivial.

Interestingly, in $q$-degree $14$ we get
\begin{align*}
\Kho{4,14}(T(2,3)\sqcup T(2,3);\Z) &\cong \Z \cong \Kho{6,14}(T(2,3)\sqcup T(2,3);\Z), \\ 
\Kho{5,14}(T(2,3)\sqcup T(2,3);\Z) &\cong \Z^4,
\end{align*}
and computation shows that both
\[
\Sq^2_0,\Sq^2_1\colon \Kh^{4,14}(T(2,3)\sqcup T(2,3);\Z/2\Z)\to \Kh^{6,14}(T(2,3)\sqcup T(2,3);\Z/2\Z)
\]
are isomorphisms.

If our Steenrod squares are the second Steenrod squares of a stable homotopy type for odd Khovanov homology, we now see that this stable homotopy type for $T(2,3)\sqcup T(2,3)$ could not be the smash product of the stable homotopy types for $T(2,3)$. Indeed, because of the absence of torsion we would get the Chang space with word $\eta$ in this case together with four spheres.

We note that in $q$-degree $14$ we also get a non-trivial second Steenrod square in the Lipshitz-Sarkar stable homotopy type for $T(2,3)\sqcup T(2,3)$. In this case the Chang space has word $_2\eta2$.

\subsection{Spanier--Whitehead duality}
If $\overline{L}$ denotes the mirror of a link $L$, we get the isomorphism
\[
\Kh^{i,j}(L;\Z/2\Z) \cong \Hom(\Kh^{-i,-j}(\overline{L};\Z/2\Z), \Z/2\Z)
\]
by turning the cochain complex of a diagram `upside down' and using universal coefficients. In the case of the Lipshitz--Sarkar stable homotopy type one gets a stronger statement involving the Spanier--Whitehead dual of the mirror \cite{MR4153651}. Its effect on cohomology groups is that the diagram
\[
\begin{tikzpicture}
\node at (0,2) {$\Kh^{i,j}(L;\Z/2\Z)$};
\node at (5.6,2) {$\Hom(\Kh^{-i,-j}(\overline{L};\Z/2\Z), \Z/2\Z)$};
\node at (0,0) {$\Kh^{i+2,j}(L;\Z/2\Z)$};
\node at (5.6,0) {$\Hom(\Kh^{-i-2,-j}(\overline{L};\Z/2\Z), \Z/2\Z)$};
\draw[->] (1.35,2) -- node[above] {$\cong$} (3,2);
\draw[->] (1.5,0) -- node[above] {$\cong$} (2.8,0);
\draw[->] (0,1.7) -- node[right] {$\Sq^2$} (0,0.3);
\draw[->] (5.6,1.7) -- node[right] {$(\Sq^2)^\ast$} (5.6,0.3);
\end{tikzpicture}
\]
commutes.

Calculations on mirrors show that this diagram does not commute if we use the second Steenrod square $\Sq^2_\varepsilon$ on the vertical arrows. Rather it seems that we should combine $\Sq^2_\varepsilon$ with the dual of $\Sq^2_{1+\varepsilon}$. In the case of the Rolfsen table we get this confirmed by calculations. In general we conjecture this behaviour.

\begin{conjecture}
Let $L$ be a link and $\overline{L}$ its mirror. Then for $\varepsilon\in \Z/2\Z$ the diagram
\[
\begin{tikzpicture}
\node at (0,2) {$\Kh^{i,j}(L;\Z/2\Z)$};
\node at (5.6,2) {$\Hom(\Kh^{-i,-j}(\overline{L};\Z/2\Z), \Z/2\Z)$};
\node at (0,0) {$\Kh^{i+2,j}(L;\Z/2\Z)$};
\node at (5.6,0) {$\Hom(\Kh^{-i-2,-j}(\overline{L};\Z/2\Z), \Z/2\Z)$};
\draw[->] (1.35,2) -- node[above] {$\cong$} (3,2);
\draw[->] (1.5,0) -- node[above] {$\cong$} (2.8,0);
\draw[->] (0,1.7) -- node[right] {$\Sq^2_\varepsilon$} (0,0.3);
\draw[->] (5.6,1.7) -- node[right] {$(\Sq^2_{1+\varepsilon})^\ast$} (5.6,0.3);
\end{tikzpicture}
\]
commutes.
\end{conjecture}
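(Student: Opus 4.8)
\emph{Strategy.} The plan is to produce a Spanier--Whitehead type duality at the level of framed $1$-flow categories, and then to identify the dual of $\cC_\varepsilon(D)$ with the category $\cC_{1+\varepsilon}(\overline D)$ built from the mirror diagram, up to the moves already known not to affect $\Sq^2$. A useful preliminary reduction: both vertical maps in the square are link invariants --- the left one is $\Sq^2_\varepsilon$, an invariant by Theorem \ref{thm:linkinv}, and the right one, after conjugating by the horizontal isomorphisms, is $(\Sq^2_{1+\varepsilon})^\ast$, an invariant by naturality of the universal-coefficients ``upside-down'' isomorphism together with Theorem \ref{thm:linkinv} applied to $\overline L$. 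Hence it suffices to verify that the square commutes for one convenient diagram $D$ of $L$ and the corresponding mirror diagram $\overline D$; no Reidemeister analysis is required.

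\emph{Step 1: the dual $1$-flow category.} For a framed $1$-flow category $(\cC,s,f)$ I would define $\cC^\vee$ by $\Ob(\cC^\vee)=\Ob(\cC)$, $|a|_{\cC^\vee}=-|a|_\cC$, and $\M_{\cC^\vee}(a,b)=\M_\cC(b,a)$; the boundary condition for $\cC^\vee$ is that for $\cC$ with the order of the two factors swapped, which causes no trouble over $\Z/2\Z$. A sign assignment $s$ transports to one for $\cC^\vee$, and --- as in the proof that $(\cC(n),s^\ast,f^\ast)$ is framed --- a frame assignment $f^\vee$ exists and is determined by $f$ up to a coboundary (Lemma \ref{lm:changefrm}), so it does not affect the resulting $\Sq^2$. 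At the chain level $C^k(\cC^\vee;\Z/2\Z)=\Hom(C^{-k}(\cC;\Z/2\Z),\Z/2\Z)$ with the dual differential, giving $H^k(\cC^\vee;\Z/2\Z)\cong\Hom(H^{-k}(\cC;\Z/2\Z),\Z/2\Z)$.

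\emph{Step 2: duality for the operator.} The claim is $\Sq^2_{\cC^\vee}=(\Sq^2_\cC)^\ast$ under the identification of Step 1. Given a cocycle $\varphi\in C^k(\cC^\vee;\Z/2\Z)$ and a cocycle $\psi\in C^{-k-2}(\cC;\Z/2\Z)$, one compares $\langle\,[\varphi],\Sq^2_\cC[\psi]\,\rangle$ with $\langle\,\Sq^2_{\cC^\vee}[\varphi],[\psi]\,\rangle$, both expanded via the special graph structures of Example \ref{ex:cocycle} and Definition \ref{def:sqca}. The objects with $\psi=1$ at grading $-k-2$ in $\cC$ are exactly the objects at grading $k+2$ in $\cC^\vee$, and a combinatorial boundary matching for $\varphi$ together with one for $\psi$ assemble into the data of a single graph structure which is the same when read backwards along the three-step flows $\M(c,d)\times\M(b,c)\times\M(a,b)$. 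The loop count $\lvert L\rvert$ and the framing sum $F(C)$ are manifestly invariant under this reversal, and the oriented-edge count $D(C)$ is direction-independent by \cite[Lm.3.10]{MR4165986}; so the two sums in Definition \ref{def:sqca} agree term by term, with well-definedness of both given by \cite[Thm.3.13]{MR4165986}. Should a self-contained matching prove awkward, an alternative is to splice $\cC$ and $\cC^\vee$ into a single ``pairing'' $1$-flow category and use Lemma \ref{lm:easyfunc} and Lemma \ref{lm:funcsqconnect}, but I expect the direct comparison to be cleaner.

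\emph{Step 3: mirror versus dual.} Mirroring $D$ swaps the two smoothings at each crossing, relabelling the cube by $u\mapsto\mathbf 1-u$ and reversing homological grading; this matches the operation of Step 1 on the underlying cube $1$-flow category, $\cC(n)^\vee\cong\cC(n)$. On the signed Burnside side, the Sarkar--Scaduto--Stoffregen functor for $\overline D$ is the functor for $D$ composed with this relabelling and with Poincar\'e duality of the exterior algebras $\Lambda(S_u)$ (identifying $\Lambda^r(S_u)^\ast$ with $\Lambda^{\lvert S_u\rvert-r}(S_u)$, the chosen orders on $S_u$ fixing the signs); this is the functor-level shadow of the chain-level identity $CO^\ast(\overline D)\cong\Hom(CO^{-\ast}(D),\Z/2\Z)$ that underlies the horizontal isomorphisms of the square. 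The sign functions $\sigma$, the edge assignment and its type then differ only by changes already shown harmless in the proof of Theorem \ref{thm:linkinv} and by Corollary \ref{cor:sqtwosigndep} and Lemma \ref{lm:changeofsign}. Finally the framings: by Lemma \ref{lm:framecover} the framing of $\cC_\varepsilon(D)$ and of $\cC_{1+\varepsilon}(\overline D)$ are each forced by \eqref{eq:framechange}, and passing to $\cC_\varepsilon(D)^\vee$ reverses the increasing-versus-decreasing homological ordering that was used to resolve the ambiguity in lines six and seven of \eqref{eq:framechange}; by the discussion preceding Lemma \ref{lm:frame4signchange}, this reversal is exactly the replacement $\varepsilon\mapsto 1+\varepsilon$, while $\delta$ is irrelevant by Lemma \ref{lm:nodeltaissue}. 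Assembling Steps 1--3 with the reduction in the first paragraph yields the conjecture.

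\emph{Main obstacle.} I expect the two genuinely delicate points to be the combinatorial bookkeeping in Step 2 --- showing that the oriented-edge and loop contributions really do match under flow reversal, where signs are easy to mishandle --- and, in Step 3, pinning down precisely why the ordering convention in \eqref{eq:framechange} produces $1+\varepsilon$ rather than $\varepsilon$. The latter is where the failure of the naive Spanier--Whitehead duality is concentrated, and it has to be tracked carefully through the identification $\cC_\varepsilon(D)^\vee\cong\cC_{1+\varepsilon}(\overline D)$, since in the odd case there is no stable homotopy type to fall back on.
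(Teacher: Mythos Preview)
The statement you are attempting is a \emph{conjecture} in the paper, not a theorem; the paper offers no proof, only computational evidence from the Rolfsen table and the heuristic that ``we should combine $\Sq^2_\varepsilon$ with the dual of $\Sq^2_{1+\varepsilon}$.'' So there is no paper proof to compare your proposal against, and any complete argument would go beyond what the author has established.

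That said, your outline has a genuine gap in Step~2. The special graph structure $\Gamma_\mathcal{C}(a,\varphi)$ is \emph{not} symmetric under flow reversal in the way you assert. Its $E'$-edges come from interval components of $\M(a,c)$, which do survive dualisation; but its $(E\setminus E')$-edges come from a combinatorial boundary matching on $\M(b,\varphi)=\coprod_i \M(b,c_i)$, that is, a pairing of points in the \emph{lower} $0$-dimensional moduli spaces. In the dual category the roles of $a$ and $c$ swap, so the matching now lives on $\coprod \M(a,b)$, the \emph{upper} $0$-dimensional moduli spaces. These are different data, and there is no reason the resulting graphs coincide, even up to the ambiguity allowed by \cite[Thm.3.13]{MR4165986}. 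Your appeal to direction-independence of $D(C)$ addresses only the choice of orientation around a fixed circle component, not this structural asymmetry. Relatedly, the framing $f^\vee$ on $\cC^\vee$ is not ``determined up to coboundary'' in general: Lemma~\ref{lm:changefrm} is specific to the cube $\cC(n)$, where the relevant cochain complex is acyclic; for an arbitrary signed cover there is no such statement, and the choice of $f^\vee$ genuinely affects $\Sq^2$.

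Your Step~3 intuition --- that dualising reverses the increasing/decreasing convention resolving the ambiguity in lines six and seven of \eqref{eq:framechange}, hence flips $\varepsilon$ --- is the most promising part of the proposal and is consistent with the paper's own discussion preceding Lemma~\ref{lm:frame4signchange}. But turning this into a proof requires first repairing Step~2, likely by constructing an explicit framed $1$-flow category interpolating between $\cC_\varepsilon(D)$ and (a shift of) $\cC_{1+\varepsilon}(\overline D)^\vee$ in the style of the mapping-cone arguments used for Lemmas~\ref{lm:nodeltaissue} and~\ref{lm:changeofsign}, rather than by a direct comparison of graph structures. The author evidently did not see how to carry this out, which is why the statement remains a conjecture.
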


\subsection{Concordance invariants}

For width reasons, the invariants $r_\pm^{\Sq^2_\varepsilon}(K)$ are going to agree with $s_{\F_2}(K)$ for knots $K$ with a small number of crossings. To see that $s_\pm^{\Sq^2_\varepsilon}(K)$ can differ from $s_{\F_2}(K)$ let $K$ be the mirror of $9_{42}$. Calculations show that $\Kh^{0,-1}(K;\F_2)\cong \F_2$, and $\Sq^2_0$ surjects onto this group. It is now straightforward to check that $-1$ is $\Sq^2_0$-full, resulting in $s_+^{\Sq^2_\varepsilon}(K)=2>s_{\F_2}(K)=0$, thus showing Theorem \ref{thm:secthm}.

In general we cannot simply read off the refinements from a calculation of the second Steenrod square, but \verb+KnotJob+ is also able to calculate $s^\alpha(K)$ for $\alpha$ one of the second Steenrod squares. Interestingly, for all prime knots $K$ with up to $13$ crossings we get $s^{\Sq^2}(K) =s^{\Sq^2_0}(K)$, while $s^{\Sq^2_1}(K)$ is always constant. Overall, there are $324$ prime knots with up to $13$ crossings for which $s^{\Sq^2}(K)$ is not constant. 

We note that there is also a refinement $s^{\Sq^1_o}(K)$ corresponding to the first Steenrod square for odd Khovanov homology, see \cite{MR4078823}, and which can be calculated by \verb+KnotJob+. Interestingly, for all the prime knots with up to $14$ crossing and non-constant $s^{\Sq^2}(K)$  we also have $s^{\Sq^1_o}(K)$ non-constant\footnote{The converse does not hold: $11^n_{38}$ and $12^n_{25}$ have non-constant $s^{\Sq^1_o}(K)$, but constant $s^{\Sq^2}(K)$.}. But $s^{\Sq^1_o}_\pm(K) = s^{\Sq^2}_\pm(K) \not= s_\F(K)$ suggests the existence of $\Sq^2(x) = \Sq^1_o(y)\not=0$, which survives the spectral sequence. But we expect no $y'$ with $\Sq^2(x) = \Sq^1_e(y')$, as this would imply $s^{\Sq^1_e}_\pm(K) \not= s_\F(K)$ which is very rare and does not happen for knots with less than $14$ crossings. For the two knots with crossing number $14$ and $s^{\Sq^1_e}_\pm(K) \not= s_\F(K)$ both $s^{\Sq^2}(K)$ and $s^{\Sq^1_o}(K)$ are constant.


The knot $K=15^n_{41127}$ has non-constant $s^{\Sq^2}(K)$, but $s^{\Sq^1_o}(K)$ is constant. We also get $s^{\Sq^2_0}(K)$ is constant, while $s^{\Sq^2_1}(K)$ is non-constant. At the moment, this is the only knot we know of for which $s^{\Sq^2_1}(K)$ is non-constant. However, we have made very few calculations for knots with more than $13$ crossings, so there may well be others among knots with crossing number $14$ or $15$. This is also the only knot we know where $s^{\Sq^2}(K)$ and $s^{\Sq^2_0}(K)$ are different. Again, this may be due to lack of calculations, but it raises the

\begin{question}
Is $s^{\Sq^2}(K)$ non-constant if and only if $s^{\Sq^2_0}(K)$ is non-constant or $s^{\Sq^2_1}(K)$ is non-constant?
\end{question}



\appendix
\section{Framing the cube $1$-flow category}
Our framed $1$-flow categories very much depend on (\ref{eq:framechange}), which tells us how to change a framing after a change of signs. Given that different values of $\varepsilon$ can lead to different results for signed covers of the cube $1$-flow category, it is only natural to ask whether there exist different ways to change the framing of $\cC(n)$ after a change of signs.

So assume that $s$ is a sign assignment for $\cC(n)$ and $f$ a corresponding frame assignment. For $x\in \Ob(\cn)$ we get a new sign assignment $s' = s +\delta(x^\ast)$. Let us define a new pre-framing $f'$ by
\[
f'(C_{u,w}) = \left\{
\begin{array}{cc}
f(C_{u,w}) + 1 & w=x \\
f(C_{u,w}) & w\not=x
\end{array}
\right.
\]
This means we only change the framing value if the signs of the lower two edges changes.

The same argument used in the proof of Lemma \ref{lm:frame4signchange} shows that $f'$ is indeed a framing for $(\cC(n),s')$, and the formula is a lot simpler than (\ref{eq:framechange}).

The problem is that this formula depends on the coboundary, and we do not get the nice change simply by looking at the changes in sign on a cube $C_{u,w}$. For example, assume that $u\geq_1v_1,v_2\geq_1w$. Then $s+\delta(w^\ast+v_1^\ast)$ and $s+\delta(v_2^\ast+u^\ast)$ lead to the same sign changes on $C_{u,w}$, but result in different framings. It is not clear to us how to resolve this problem similar to the {\em increasing homological grading} formula used in (\ref{eq:framechange}), in order to get a good change-of-framing formula for signed covers of $\cC(n)$.

Nevertheless, there may be other ways to get a working formula to frame signed covers. In particular, it could involve multiplying signs, or the degree of objects $u\in \Ob(\cn)$. If one only wants to allow linear combinations in the signs, and insist on the first two lines of (\ref{eq:framechange}), the remaining lines are determined. If we are willing to change line two, there is another formula that can be used. This can also involve $\delta\in \Z/2\Z$, but Lemma \ref{lm:nodeltaissue} also applies for it.

We use the same $a,b,c,d\in\Z/2\Z$ as in (\ref{eq:framechange}). Assume $(\cC(n),s,f)$ is framed, and let $s'$ be another sign assignment for the cube. Define
\begin{equation}\label{eq:altframechange}
f''_{\varepsilon}(C_{u,w}) = 
\left\{
\begin{array}{cc}
f(C_{u,w}) & \mbox{if } a=a', b=b', c=c', d=d' \\[0.2cm]
f(C_{u,w}) + c+d & \mbox{if } a\not=a', b\not=b', c=c', d=d' \\[0.2cm]
f(C_{u,w})& \mbox{if } a=a', b=b', c\not=c', d\not=d' \\[0.2cm]
f(C_{u,w}) + d & \mbox{if } a\not=a', b=b', c=c', d\not=d' \\[0.2cm]
f(C_{u,w}) + c & \mbox{if } a=a', b\not=b', c\not=c', d=d' \\[0.2cm]
f(C_{u,w}) + \varepsilon + d & \mbox{if } a\not=a', b=b', c\not=c', d=d' \\[0.2cm]
f(C_{u,w}) + \varepsilon + c & \mbox{if } a=a', b\not=b', c=c', d\not=d' \\[0.2cm]
f(C_{u,w}) + c + d & \mbox{if } a\not=a', b\not=b', c\not=c', d\not=d' 
\end{array}
\right.
\end{equation}

The proof of Lemma \ref{lm:frame4signchange} carries over to show that $(\cC(n),s',f'')$ is framed.

Given a framing $(\cC(n),s,f)$ and a $(0,\varepsilon)$-signed cover $(\cC,s_\cC,f_\cC^\varepsilon)$, we can form a new framing $\tilde{f}_\cC^\varepsilon$ by using (\ref{eq:altframechange}) instead of (\ref{eq:framechange}). We get the same Steenrod square after a small adjustment in the $\varepsilon$ value.

\begin{lemma}
The framed $1$-flow category $(\cC,s_\cC,f_\cC^\varepsilon)$ admits the same second Steenrod square as the framed $1$-flow category $(\cC,s_\cC,\tilde{f}_\cC^{1+\varepsilon})$.
\end{lemma}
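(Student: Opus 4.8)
The plan is to mimic the strategy used repeatedly in the paper for removing dependence on auxiliary choices: build a mapping-cone $1$-flow category interpolating between the two framings and apply Lemma~\ref{lm:funcsqconnect}. Concretely, I would set up the short exact sequence
\[
0\longrightarrow (\cC,s_\cC,f_\cC^\varepsilon) \longrightarrow \cCC \longrightarrow \mathfrak{S}(\cC,s_\cC,\tilde f_\cC^{1+\varepsilon})\longrightarrow 0,
\]
identifying $(\cC,s_\cC,f_\cC^\varepsilon)$ with the downward closed subcategory on $\Ob(\cC)\times\{0\}$ and $(\cC,s_\cC,\tilde f_\cC^{1+\varepsilon})$ (suspended) with the upward closed subcategory on $\Ob(\cC)\times\{1\}$, exactly as in the proofs of Lemma~\ref{lm:nodeltaissue} and Lemma~\ref{lm:changeofsign}. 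Since both framings use the \emph{same} sign assignment $s_\cC$, the sign on the new morphisms $P_a\in\M_\cCC(a_1,a_0)$ can be taken to be $s_\cCC(P_a)=|a|_\cC$ as before, and the only freedom — and the only thing to check — is the framing $f_\cCC(I_A)$ on the intervals $I_A\subset\M_\cCC(a_1,b_0)$.

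The key computation is to find the right formula for $f_\cCC(I_A)$ and verify the compatibility condition of Remark~\ref{rm:hexagon}. First I would compare (\ref{eq:framechange}) with $\delta=0$ against (\ref{eq:altframechange}) line by line: in lines $1$ and $2$ of (\ref{eq:framechange}) (no sign change, and the $a\ne a',b\ne b'$ case) the value agrees with lines $1$ and $8$ of (\ref{eq:altframechange}) up to the correction term $c+d$, and so on through all eight cases. This case analysis records, for each $2$-dimensional sub-cube $C_{u,w}$, the quantity $f_\cC^\varepsilon(C_{u,w})+\tilde f_\cC^{1+\varepsilon}(C_{u,w})\in\Z/2\Z$ as an explicit function of $a,b,c,d$ (equivalently of $s_\cC$ and $s$ evaluated at $h_0$ of the relevant points), together with the observation that replacing $\varepsilon$ by $1+\varepsilon$ in (\ref{eq:framechange}) is precisely what makes lines $6$ and $7$ match lines $6$ and $7$ of (\ref{eq:altframechange}) after the $c+d$ shift. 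I would then \emph{define} $f_\cCC(I_A)$ by a formula of the same shape as in Lemma~\ref{lm:nodeltaissue}/Lemma~\ref{lm:changeofsign}, namely $f_\cCC(I_A)=s_\cC(A)\cdot|a|_\cC$ plus a correction term reading off whether $I_A$ sits over one of the $2$-cubes where $f_\cC^\varepsilon$ and $\tilde f_\cC^{1+\varepsilon}$ differ, and check that in every hexagon $H_I$ of Remark~\ref{rm:hexagon} the sum $t_I$ vanishes. This is the same telescoping cancellation seen in the earlier lemmas: the $s_\cC$-values of the four boundary points $B,B',A,A'$ each appear twice, the $|c|_\cC$-multiple is killed because $s_\cC$ is a sign assignment (so $1+s_\cC(B)+s_\cC(B')+s_\cC(A)+s_\cC(A')=0$), and the leftover $f_\cCC(I_0)+f_\cCC(I_1)$ term is exactly arranged to absorb the $c+d$ and the $\varepsilon\mapsto 1+\varepsilon$ discrepancy.

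Once the compatibility condition is verified, the conclusion is immediate: by Lemma~\ref{lm:funcsqconnect} the $\Sq^2$-operator commutes with the connecting homomorphism of the above short exact sequence, and since the connecting homomorphism $\delta^\ast\colon H^k(\cC,s_\cC,f_\cC^\varepsilon)\to H^{k+1}(\mathfrak{S}\cC,s_\cC,\tilde f_\cC^{1+\varepsilon})\cong H^k(\cC,s_\cC,\tilde f_\cC^{1+\varepsilon})$ is the identity (as in the discussion following the definition of $\cCC$), the two second Steenrod squares coincide. The main obstacle I anticipate is purely bookkeeping: matching up which of the eight cases of (\ref{eq:framechange}) with $\delta=0$ corresponds to which case of (\ref{eq:altframechange}) and keeping the $c+d$ shift and the $\varepsilon\leftrightarrow 1+\varepsilon$ swap consistent across all of them, and then threading that correction term correctly through the six-interval hexagon sum so that the $t_I=0$ cancellation still goes through. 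There is no conceptual difficulty beyond what is already in Lemma~\ref{lm:changeofsign}; the risk is a sign or index slip in the case analysis, which I would guard against by double-checking the two ambiguous lines (six and seven) separately, since those are exactly where the value of $\varepsilon$ enters.
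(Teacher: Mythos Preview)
Your plan is correct and is exactly the approach the paper takes: build the mapping cone $\cCC$ with $s_\cCC(P_a)=|a|_\cC$, frame the new intervals $I_A$ appropriately, verify the hexagon condition of Remark~\ref{rm:hexagon}, and conclude via Lemma~\ref{lm:funcsqconnect}.

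The one thing the paper does that would save you the case-by-case bookkeeping you are worried about is to observe a closed-form expression for the framing difference: comparing (\ref{eq:framechange}) (with $\delta=0$) against (\ref{eq:altframechange}) with $\varepsilon$ replaced by $1+\varepsilon$ gives, uniformly over all eight cases,
\[
f'_\varepsilon(C_{u,w}) + f''_{1+\varepsilon}(C_{u,w}) = a(a+a')+b(b+b')+c(c+c')+d(d+d').
\]
Each summand is of the form $s(h_0(X))\bigl(s(h_0(X))+s_\cC(X)\bigr)$ for one of the four boundary points $X\in\{B,B',A,A'\}$, so the correction term you want on $I_A$ is not a discrete ``over a bad $2$-cube'' indicator but simply
\[
f_\cCC(I_A) = |a|_\cC\, s_\cC(A) + s(h_0(A))\bigl(s(h_0(A))+s_\cC(A)\bigr).
\]
With this formula the hexagon sum $t_I$ telescopes exactly as you describe, and there is no need to track the eight cases separately.
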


\begin{proof}
We need to choose an appropriate framing on the mapping cone $\cCC$. For the signs we simply use $s_\cC$ on both subcategories, and $s_\cCC(P_a)=|a|_\cC$ for any $a\in \Ob(\cC)$.

Before we choose the framing, observe that
\[
f'_\varepsilon(C_{u,w}) + f''_{1+\varepsilon}(C_{u,w}) = a(a+a')+b(b+b')+c(c+c')+d(d+d').
\]
We therefore choose
\[
f_\cCC(I_A) = |a|_\cC s_\cC(A) + s(h_0(A)) (s(h_0(A))+s_\cC(A)).
\]
It is straightforward to show that this gives rise to a framed $1$-flow category, which then proves the lemma by Lemma \ref{lm:funcsqconnect}.
\end{proof}

This shows that only allowing linear combinations in signs and keeping the first line of (\ref{eq:framechange}) cannot produce another second Steenrod square.

\begin{remark}
We could have used $1+\varepsilon$ in (\ref{eq:altframechange}) instead of $\varepsilon$ to get a cleaner statement. However, as in (\ref{eq:framechange}) we get that the sixth line is obtained by using the second and fifth line, or the third and fourth line. Choosing $\varepsilon = 0$ then corresponds to using the {\em increasing homological grading} convention. It does seem to be surprising that there is a change in $\varepsilon$ when moving from (\ref{eq:framechange}) to (\ref{eq:altframechange}).
\end{remark}

\bibliography{KnotHomology}
\bibliographystyle{amsalpha}

\end{document}